\documentclass[11pt]{article}
\usepackage[margin=1.1in]{geometry}
\usepackage{amsmath,amssymb,amsthm,mathtools}
\usepackage{enumitem}
\usepackage{booktabs,tabularx}
\newcolumntype{Y}{>{\raggedright\arraybackslash}X}
\usepackage{cite}
\usepackage[colorlinks=true,linkcolor=blue,citecolor=blue,urlcolor=blue,
  pdftitle={Cyclic Sources of Strong Domination in Graph Norms},
  pdfauthor={Shuyan Chen}]{hyperref}
\usepackage{microtype}
\usepackage{tikz}
\usetikzlibrary{arrows.meta,calc,positioning}

\newtheorem{theorem}{Theorem}[section]
\newtheorem{corollary}[theorem]{Corollary}
\newtheorem{proposition}[theorem]{Proposition}
\newtheorem{lemma}[theorem]{Lemma}
\newtheorem{claim}[theorem]{Claim}
\newtheorem{problem}[theorem]{Problem}
\newtheorem{conjecture}[theorem]{Conjecture}
\theoremstyle{definition}
\newtheorem{definition}[theorem]{Definition}
\newtheorem{remark}[theorem]{Remark}

\newcommand{\E}{\mathbb E}

\newcommand{\one}{\mathbf 1}

\newcommand{\diam}{\operatorname{diam}}
\newcommand{\tr}{\operatorname{tr}}

\newcommand{\preceqG}{\preceq_{\mathrm G}}
\newcommand{\preceqop}{\preceq_{\mathrm{op}}}
\newcommand{\succeqop}{\succeq_{\mathrm{op}}}

\title{Cyclic Sources of Strong Domination in Graph Norms}
\author{Shuyan Chen\\
\small Department of Mathematics, University of Manchester\\
\small \texttt{shuyan.chen-2@student.manchester.ac.uk}}
\date{}

\begin{document}
\maketitle

\begin{abstract}
Conlon and Lee asked for strongly dominating graphs beyond norming graphs and even paths.  We construct a two-parameter family of pairwise non-isomorphic $2$-connected strongly dominating graphs that are not seminorming, and hence lie outside the two classes of examples previously identified for signed strong domination.  The construction uses cyclic amalgamation of two-rooted blocks.  For root-reversible blocks, we characterize the generation of all even cyclic amalgams by local even-Schatten inequalities for transfer operators.  We determine this criterion for $K_{2,m}$, with the roots in the part of size $m$: it holds exactly when $m$ is even.  We also classify the connected outerplanar strongly dominating graphs and the connected root-reversible outerplanar blocks satisfying the universal cyclic criterion.
\end{abstract}

\section{Introduction}

For a finite graph $H$ and a bounded symmetric measurable kernel $W:[0,1]^2\to\mathbb R$, write
\[
 t_H(W)=\int_{[0,1]^{V(H)}}\prod_{uv\in E(H)}W(x_u,x_v)\prod_{v\in V(H)}dx_v,
 \qquad
 p_H(W)=|t_H(W)|^{1/e(H)}.
\]
In every domination statement, $F\subseteq H$ means a spanning edge-subgraph; allowing vertex deletion is equivalent because isolated vertices contribute a factor $1$.  Following Conlon and Lee, $H$ \emph{strongly dominates} $F$ if $p_H(W)\ge p_F(W)$ for every bounded symmetric real kernel $W$, and $H$ is \emph{strongly dominating} if this holds for every non-empty $F\subseteq H$ \cite[Section~6]{ConlonLeeDomination}.  Signed kernels permit cancellation, so this is substantially stronger than non-negative domination.  Conlon and Lee proved that seminorming graphs are strongly dominating, identified even paths as the only further examples then known, and asked for more \cite[Section~6]{ConlonLeeDomination}.

Strong domination should be distinguished from positivity and the graph-norm classes; the terminology used here is fixed in Remark~\ref{rem:graph-norm-terminology}.  Positive graphs require $t_H(W)\ge0$ for every real kernel and are conjecturally governed by gluing along an independent set \cite{PositiveGraphs}.  Norming, seminorming and weakly norming graphs have their own structural theory \cite{ConlonLeeReflection,GarbeHladkyLee,LeeSidorenkoComplex}.  The mechanism below is different: a local signed inequality is propagated through a two-vertex boundary by the noncommutative H\"older inequality.

Our main application is a two-parameter family of $2$-connected strongly dominating graphs outside the seminorming class.  We obtain it from a cyclic local-to-global transfer criterion, determine that criterion for two-rooted $K_{2,m}$ blocks, and classify its outerplanar sources.

Let $(B,\ell,r)$ be a graph with two distinguished non-adjacent roots.  Its cyclic amalgam $C_q(B)$ is obtained from $q$ copies of $B$ by identifying the right root of each copy with the left root of the next; Figure~\ref{fig:cyclic-amalgamation} shows the construction.  Informally, we call $B$ a \emph{cyclic source} when these repetitions around cycles produce strongly dominating graphs; the precise analytic condition is stated below.  The copies share root variables, so a subgraph density is a cyclic integral rather than a product of block densities.  For a birooted subgraph $S\subseteq B$, let $\mathsf T_S^W$ be the integral operator whose kernel is the conditional density of $S$ with the two root variables fixed.  The key identity is
\[
        t_F(W)=\operatorname{tr}\bigl(\mathsf T_{S_1}^W\cdots\mathsf T_{S_q}^W\bigr)
\]
for every subgraph $F\subseteq C_q(B)$, where $S_i$ is the restriction of $F$ to the $i$th block.  The global composition step is therefore the Schatten--von Neumann H\"older inequality.

\begin{figure}[t]
\centering
\begin{tikzpicture}[scale=0.9, every node/.style={font=\small}]
  \begin{scope}[xshift=-4.6cm]
    \node (l) at (-1.8,1.25) {$\ell$};
    \node (z1) at (-0.9,1.25) {$z_1$};
    \node (zd) at (0,1.25) {$\cdots$};
    \node (zm) at (0.9,1.25) {$z_{m-2}$};
    \node (r) at (1.8,1.25) {$r$};
    \node[circle,draw,inner sep=2pt] (a) at (-0.55,-0.15) {$a$};
    \node[circle,draw,inner sep=2pt] (b) at (0.55,-0.15) {$b$};
    \foreach \v in {l,z1,zm,r}{
      \draw (a)--(\v);
      \draw (b)--(\v);
    }
    \draw[densely dotted] (a)--(zd);
    \draw[densely dotted] (b)--(zd);
    \draw[rounded corners] (-2.2,1.6) rectangle (2.2,-0.55);
    \node at (0,-0.95) {(a) the block $\mathcal B_m=K_{2,m}$};
    \node[font=\scriptsize] at (0,1.9) {the roots lie in the size-$m$ part};
  \end{scope}

  \begin{scope}[xshift=3.4cm]
    \node[circle,draw,inner sep=2pt] (x1) at (0,1.65) {$x_1$};
    \node[circle,draw,inner sep=2pt] (x2) at (1.8,0) {$x_2$};
    \node[circle,draw,inner sep=2pt] (x3) at (0,-1.65) {$x_3$};
    \node[circle,draw,inner sep=2pt] (x4) at (-1.8,0) {$x_4$};
    \node[draw,rounded corners,fill=white,inner sep=3pt] at (1.0,0.95) {$B_1$};
    \node[draw,rounded corners,fill=white,inner sep=3pt] at (1.0,-0.95) {$B_2$};
    \node[draw,rounded corners,fill=white,inner sep=3pt] at (-1.0,-0.95) {$B_3$};
    \node[draw,rounded corners,fill=white,inner sep=3pt] at (-1.0,0.95) {$B_4$};
    \draw[thick] (x1) to[bend left=10] (x2);
    \draw[thick] (x2) to[bend left=10] (x3);
    \draw[thick] (x3) to[bend left=10] (x4);
    \draw[thick] (x4) to[bend left=10] (x1);
    \node at (0,-2.15) {(b) the cyclic amalgam $C_4(B)$};
    \node[font=\scriptsize,align=center] at (0,2.15) {$r_i$ is identified with $\ell_{i+1}$\\(indices modulo $4$)};
  \end{scope}
\end{tikzpicture}
\caption{A two-rooted block and its cyclic amalgamation.  The right-hand panel is schematic: each thick arc is replaced by a full copy of $B$.  }
\label{fig:cyclic-amalgamation}
\end{figure}

Call $B$ \emph{root-reversible} if an automorphism interchanges its roots.  We call it a \emph{universal even-Schatten block} if, for every even integer $p\ge2$, every bounded symmetric real kernel $W$, and every non-empty birooted subgraph $S\subseteq B$,
\begin{equation}\label{eq:intro-universal-schatten}
        \|\mathsf T_S^W\|_{S_p}^{1/e(S)}
        \le
        \|\mathsf T_B^W\|_{S_p}^{1/e(B)}.
\end{equation}
Root-reversibility is not part of the analytic definition; it is the symmetry needed for the converse.  Root non-adjacency is imposed whenever $C_2(B)$ is used.  The first theorem identifies the local condition with strong domination of every even cyclic amalgam.

\begin{theorem}[Universal cyclic transfer criterion]\label{thm:exact-cyclic-source}
Let $(B,\ell,r)$ be a root-reversible birooted graph with non-adjacent roots and $b=e(B)\ge1$.  The following are equivalent.
\begin{enumerate}[label=\textup{(\roman*)}]
\item $B$ is a universal even-Schatten block.
\item $C_q(B)$ is strongly dominating for every even integer $q\ge2$.
\end{enumerate}
If the local inequality also holds at every odd integer exponent and $\mathsf T_B^W$ is positive semidefinite for every $W$, then $C_q(B)$ is strongly dominating for every integer $q\ge2$.
\end{theorem}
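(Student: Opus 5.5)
For (i)$\Rightarrow$(ii), fix an even $q\ge2$, a bounded symmetric real kernel $W$, and a non-empty $F\subseteq C_q(B)$. Let $S_i$ be the restriction of $F$ to the $i$th block, with $S_i$ empty allowed; in that case $\mathsf T_{S_i}^W$ is the operator with constant kernel $1$, which has $\|\cdot\|_{S_p}=1$ for every $p$. Start from the trace identity $t_F(W)=\tr(\mathsf T_{S_1}^W\cdots\mathsf T_{S_q}^W)$ and apply the noncommutative H\"older inequality with exponents $p_i$ satisfying $\sum_i 1/p_i=1$. The choice is $p_i=e(F)/e(S_i)$ for the non-empty $S_i$, while the empty blocks are handled by operator norm $\le 1$ (exponent $\infty$). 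These $p_i$ need not be even integers, and this is the main obstacle, since the hypothesis only controls even Schatten norms. So instead of weighted H\"older I would first use the uniform H\"older bound
\[
|t_F(W)|\le\prod_{i=1}^q\|\mathsf T_{S_i}^W\|_{S_q},
\]
which uses only the even exponent $q$. The local inequality \eqref{eq:intro-universal-schatten} with $p=q$ then gives $\|\mathsf T_{S_i}^W\|_{S_q}\le \|\mathsf T_B^W\|_{S_q}^{e(S_i)/b}$, and hence $|t_F(W)|\le \|\mathsf T_B^W\|_{S_q}^{e(F)/b}$.

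Next identify $\|\mathsf T_B^W\|_{S_q}^q$ with $t_{C_q(B)}(W)$. Root-reversibility makes $\mathsf T_B^W$ self-adjoint: its kernel is symmetric in the two root variables, and it is real. Since $q$ is even, $\|\mathsf T_B^W\|_{S_q}^q=\tr((\mathsf T_B^W)^q)=t_{C_q(B)}(W)\ge0$. Therefore
\[
|t_F(W)|\le t_{C_q(B)}(W)^{e(F)/(qb)},
\]
which is exactly $p_F(W)\le p_{C_q(B)}(W)$ because $e(C_q(B))=qb$. Non-adjacency of the roots ensures that $C_q(B)$ is a simple graph with $qb$ edges and that the trace identity holds, including for $q=2$.

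For (ii)$\Rightarrow$(i), fix an even $p$, a kernel $W$ and a non-empty $S\subseteq B$, and apply strong domination of $C_p(B)$ to the subgraph $F$ that places $S$ in every block. Then $t_F(W)=\tr((\mathsf T_S^W)^p)$ with $e(F)=p\,e(S)$. The operator $\mathsf T_S^W$ is not necessarily self-adjoint, so this trace is not $\|\mathsf T_S^W\|_{S_p}^p$. The fix is to use root-reversibility: let $F$ alternate $S$ and its reflection $\bar S$ under the root-swapping automorphism. Then $\mathsf T_{\bar S}^W=(\mathsf T_S^W)^*$, and $\tr((\mathsf T_S^W(\mathsf T_S^W)^*)^{p/2})=\|\mathsf T_S^W\|_{S_p}^p$. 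Evenness of $p$ is used precisely here, so that the alternation closes up around the cycle. Strong domination gives $\|\mathsf T_S^W\|_{S_p}^{p/(p e(S))}\le t_{C_p(B)}(W)^{1/(pb)}=\|\mathsf T_B^W\|_{S_p}^{1/b}$, which is \eqref{eq:intro-universal-schatten}.

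For the final clause, with $q$ odd, the same H\"older chain at exponent $q$ uses the local inequality at $p=q$. Positive semidefiniteness gives $\|\mathsf T_B^W\|_{S_q}^q=\tr((\mathsf T_B^W)^q)=t_{C_q(B)}(W)$, which would otherwise fail for odd $q$ because negative eigenvalues cancel. The rest of the argument is verbatim. The main points to check carefully are the trace-class and compactness properties of these bounded-kernel operators on $L^2[0,1]$, which justify the trace identity and H\"older, and the treatment of empty blocks.
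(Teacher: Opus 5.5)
Your proposal is correct and follows the paper's proof. Sufficiency uses the uniform Schatten--H\"older bound at exponent $q$ together with self-adjointness of $\mathsf T_B^W$ from the root-swapping automorphism; this is the argument of Theorem~\ref{thm:cyclic-schatten-criterion}. Necessity alternates $S$ with its root-swapped image so that the cyclic trace becomes $\|\mathsf T_S^W\|_{S_q}^q$, and the odd case uses positive semidefiniteness. The trace-class and cyclic-trace issues you flag are exactly what the paper handles in Lemma~\ref{lem:cyclic-trace-formula}.
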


At the exponent $p=2m$, the local condition in Theorem~\ref{thm:exact-cyclic-source} may equivalently be written as the spectral-moment inequality
\begin{equation}\label{eq:intro-moment-formulation}
 \bigl(\operatorname{tr}|\mathsf T_S^W|^{2m}\bigr)^{b}
 \le
 \bigl(\operatorname{tr}|\mathsf T_B^W|^{2m}\bigr)^{e(S)}
\end{equation}
for every non-empty $S\subseteq B$, every bounded symmetric real kernel $W$, and every integer $m\ge1$.  This moment form is convenient for certificates; the substantive equivalence is local-to-global.  A finite classification first appears for complete bipartite blocks.  For $m\ge2$, let $\mathcal B_m$ be $K_{2,m}$ rooted at two vertices in the part of size $m$, and put
\[
        N_{t,q}=C_q(\mathcal B_{2t}).
\]

\begin{theorem}[Complete-bipartite parity theorem]\label{thm:necklace-family}
For every $m,q\ge2$,
\[
        C_q(\mathcal B_m)\text{ is strongly dominating}
        \quad\Longleftrightarrow\quad
        m\text{ is even}.
\]
For $t,q\ge2$, the graphs $N_{t,q}$ are pairwise non-isomorphic and $2$-connected; none is seminorming (and hence none is norming), and none is a path.  At the boundary $t=1$, the graphs $C_q(\mathcal B_2)$ are norming reflection graphs.
\end{theorem}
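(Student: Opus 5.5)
The plan has two directions and a structural part. For the forward direction ($m$ even $\Rightarrow$ strongly dominating for every $q\ge2$) I will use the final clause of Theorem~\ref{thm:exact-cyclic-source}. That clause needs two things: the local inequality at every integer exponent $p\ge2$, and positive semidefiniteness of $\mathsf T_{\mathcal B_m}^W$.

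Positivity is immediate. With roots $x,y$ and centres $a,b$, the kernel of $\mathsf T_{\mathcal B_m}^W$ is
\[
K(x,y)=\int\!\!\int W(x,a)W(x,b)W(y,a)W(y,b)\,g(a,b)^{m-2}\,da\,db,
\]
where $g(a,b)=\int W(z,a)W(z,b)\,dz$. When $m$ is even, $g^{m-2}\ge0$, so $K$ is a non-negative mixture of rank-one kernels $f_{ab}\otimes f_{ab}$ with $f_{ab}(x)=W(x,a)W(x,b)$. Hence $K$ is positive semidefinite.

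For the local inequality, each birooted subgraph $S\subseteq\mathcal B_m$ is described by which of the four root–centre edges it keeps and, for each middle vertex $z_i$, which of its edges to $a$ and $b$ it keeps. I will write $\mathsf T_S^W$ as an average over $(a,b)$ of a rank-one operator $u_{ab}\otimes v_{ab}$, weighted by a product of factors of the form $g(a,b)$, $d(a)=\int W(z,a)\,dz$ or $d(b)$. The bound $\|\mathsf T_S\|_{S_p}\le\|\mathsf T_{\mathcal B_m}\|_{S_p}^{e(S)/e(B)}$ should then follow from three ingredients. First, the Schatten triangle inequality over the $(a,b)$-average. Second, Hölder in the weights, which uses $|g|^{k}\le (g^{m-2})^{k/(m-2)}$-type interpolation, valid because $m-2$ is even. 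Third, Cauchy–Schwarz to replace $u\otimes v$ by the symmetric $f\otimes f$ terms. I expect this case analysis, uniform in $p$, to be the main obstacle. It may be cleaner to first reduce to $p=\infty$ and $p=2$ endpoints plus log-convexity, or to use the moment form \eqref{eq:intro-moment-formulation}, writing $\operatorname{tr}\mathsf T^p$ as a density of a larger graph and invoking known domination among those densities.

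For the converse ($m$ odd $\Rightarrow$ not strongly dominating for any $q$), I will not need the local criterion. A single explicit kernel suffices, chosen so that the $g^{m-2}$ factor changes sign. A first candidate is $W=J-(1+\varepsilon)\,\mathbf 1_{\text{diagonal block}}$-type step functions, or $W=1+\lambda\,\sigma(x)\sigma(y)$ with $\sigma=\pm1$ balanced. I will try to make $t_{C_q(\mathcal B_m)}(W)$ small, or zero by continuity through a sign change, while a subgraph such as a single edge or a $C_4$ (which is always positive) keeps a positive density. The sign change is the mechanism: for odd $m-2$, $g^{m-2}$ can be negative, so the full density can pass through $0$ as a parameter varies. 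By the intermediate value theorem there is then a $W$ with $p_{C_q(\mathcal B_m)}(W)=0<p_F(W)$.

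For the structural claims, I will read $t$ and $q$ off invariants. The number of degree-$2t$ vertices is $2q$ (the centres), and $e=4tq$, so $(t,q)$ is recovered and the graphs are pairwise non-isomorphic. $2$-connectivity holds because every block is $2$-connected and the blocks are arranged in a cycle. The graphs are not paths since they contain cycles. Non-seminorming will follow from the known necessary condition that seminorming graphs are edge-transitive (or, failing that, from a Hölder/degree-regularity obstruction): root vertices have degree $4$, centres have degree $2t\ne4$ when $t\ge3$, and the case $t=2$ needs a separate argument, for example the failure of a known necessary condition. For $t=1$, $C_q(\mathcal B_2)$ is $K_{2,2}$ cycled, i.e.\ $C_{2q}\times K_2$-like, and I will exhibit it as a reflection graph in the sense of Conlon–Lee.
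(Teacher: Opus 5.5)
\textbf{Even $m$: the proposed route cannot prove the local inequality.} Your positivity argument for $\mathsf T_{\mathcal B_m}^W$ is correct; it is the paper's identity $\mathsf L_t=\mathsf X_t\mathsf X_t^*$. The gap is the local inequality, which is the real content of the even direction.

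Your first ingredient, the Schatten triangle inequality over the $(a,b)$-average, replaces $\mathsf T_S^W$ by the scalar $X_S=\iint|w_S|\,\|u_{ab}\|_2\|v_{ab}\|_2\,da\,db$. This quantity does not depend on $p$ and dominates $\|\mathsf T_S^W\|_{S_1}$. Your chain would therefore force $\|\mathsf T_S^W\|_{S_1}\le\|\mathsf T_{\mathcal B_m}^W\|_{S_p}^{e(S)/(2m)}$, which is false. To see this, let $m=2t\ge4$ and let $W=\sum_{i=1}^n 1_{I_i\times I_i}$ for a partition of $[0,1]$ into intervals of length $1/n$. Let $S$ be $\mathcal B_m$ with both edges at one private vertex deleted. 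Both transfers are multiples of $\sum_i 1_{I_i}\otimes 1_{I_i}$, with $\|\mathsf T_S^W\|_{S_1}=n^{-(m-1)}$ and $\|\mathsf T_{\mathcal B_m}^W\|_{S_p}=n^{-(m+1)+1/p}$. So $X_S=\|\mathsf T_S^W\|_{S_1}$ exceeds the target by the factor $n^{(m-1)(1-1/p)/m}$ for every $p>1$, even though \eqref{eq:k2even-schatten-local} holds for this $W$.

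Your fallbacks do not close the gap either. Log-convexity in $p$ bounds $\|\mathsf T_{\mathcal B_m}^W\|_{S_p}$ from \emph{above} by its endpoint norms, which is the wrong side for the right-hand side. Rewriting $\operatorname{tr}|\mathsf T_S^W|^p$ as a subgraph density of $C_p(\mathcal B_m)$ is circular.

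The missing idea is to keep the centre variables $(u,v)$ as the intermediate Hilbert space rather than integrate them out:
\begin{enumerate}[label=(\alph*)]
\item Split the $2t-2$ private vertices into two halves of size $t-1$ and factor $\mathsf T_S^W=\mathsf X_{\sigma_\ell,\boldsymbol\tau^L}\mathsf X_{\sigma_r,\boldsymbol\tau^R}^{*}$ through $L^2([0,1]^2)$.
\item Schatten H\"older then reduces everything to the half-bound $\|\mathsf X_{\sigma,\boldsymbol\tau}\|_{S_{2p}}\le\|\mathsf T_{\mathcal B_{2t}}^W\|_{S_p}^{h/(4t)}$.
\item Proving that half-bound is the actual work. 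Half-states with at most one root edge follow from Finner's inequality (Lemma~\ref{lem:finner-coordinate}).
\item For two root edges, interpolate in the number $s$ of $K$-factors between $s=1$ and $s=t$ using the three-lines theorem (Lemma~\ref{lem:truncated-schatten-interpolation}).
\item Equal $d(u)$, $d(v)$ exponents are then handled by the Gram order $d\otimes d\preceqG K$, Schur powers, and integer Schatten monotonicity for non-negative kernels.
\item Unequal exponents are reduced to equal ones by a cycle Cauchy--Schwarz inequality.
\end{enumerate}

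\textbf{Odd $m$: the sign-change mechanism is unavailable for even $q$.} Root reversal makes $\mathsf T_{\mathcal B_m}^W$ self-adjoint, so $t_{C_q(\mathcal B_m)}(W)=\|\mathsf T_{\mathcal B_m}^W\|_{S_q}^q\ge0$ for every $W$. There is no sign change to exploit; you must annihilate the transfer outright. Your candidate $W=1+\lambda\,\sigma\otimes\sigma$ never does this: its transfer has off-diagonal entry $\tfrac12(1-\lambda^2)^2[(1+\lambda^2)^{m-2}+(1-\lambda^2)^{m-2}]$, which vanishes only at $\lambda=\pm1$, where the diagonal entry is $2^m$.

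The fix is immediate. The two centres of each block have degree exactly $m$ in $C_q(\mathcal B_m)$, so Lemma~\ref{lem:odddegree} applies directly. Its kernel $W=f\otimes f$ with $\E f^m=0\ne\E f$ gives $t_{C_q(\mathcal B_m)}(W)=0<t_{K_2}(W)$ for every $q$. The paper uses the same kernel, comparing instead with the subgraph that strips one private vertex in every block.

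\textbf{Structural part.} Your non-seminorming argument compares the wrong vertices. Roots (degree $4$) and centres (degree $2t$) lie in opposite colour classes, and biregular bipartite graphs can be edge-transitive, so this proves nothing even for $t\ge3$. Compare roots with private vertices instead: they lie in the same class with degrees $4$ and $2$. Equivalently, the edge types have endpoint degrees $(2t,4)$ and $(2t,2)$. This separates edge orbits for every $t\ge2$, so $t=2$ needs no separate argument.

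Two smaller points:
\begin{itemize}
\item At $t=2$ there are $3q$ vertices of degree $4$, not $2q$. However, $t=\Delta/2$ and $q=e/(4t)$ still recover the parameters.
\item $C_q(\mathcal B_2)=R_q$ has degrees $4$ and $2$, so it is not of the form $C_{2q}\times K_2$. The paper realizes it as the reflection graph of $I_2(q)\times A_1$ with generator subsets $\{s,u\}$ and $\{t\}$. Asserting \emph{norming}, rather than seminorming, additionally uses the Lee--Sidorenko non-degeneracy theorem.
\end{itemize}
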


These graphs lie outside both classes previously identified for signed strong domination: they are not seminorming, and their $2$-connected cyclic structure rules out even paths.  For even $m$ the proof gives a Gram factorization and local inequalities at every integer exponent $p\ge2$; for odd $m$ a rank-one signed kernel annihilates the full transfer but not a proper subgraph.

The spectral source condition also admits a finite combinatorial description in the outerplanar class.  Let $\mathcal D_k$ be the chain of $k$ copies of $C_4$ joined successively at opposite vertices, with the two unglued opposite vertices as roots.

\begin{theorem}[Outerplanar cyclic-source classification]\label{thm:outerplanar-source}
Let $(B,\ell,r)$ be a connected root-reversible outerplanar birooted graph with non-adjacent roots.  Then $B$ is a universal even-Schatten block if and only if either
\[
        B\cong P_k\quad(k\ge2),
        \qquad\text{with the roots at its endpoints},
\]
or
\[
        B\cong\mathcal D_k\quad(k\ge1).
\]
In both cases every even cyclic amalgam $C_q(B)$ is norming.  For $B=\mathcal D_k$, every $C_q(B)$ is norming without a parity restriction on $q$.
\end{theorem}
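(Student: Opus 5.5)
We argue through the universal cyclic transfer criterion, Theorem~\ref{thm:exact-cyclic-source}. Since $B$ is root-reversible with non-adjacent roots, $B$ is a universal even-Schatten block exactly when every even cyclic amalgam $C_q(B)$ is strongly dominating. The proof therefore splits into a sufficiency part, where we verify the local inequality \eqref{eq:intro-universal-schatten} for $P_k$ and $\mathcal D_k$ together with the norming claims, and a necessity part, where every other root-reversible outerplanar block is excluded by a signed kernel violating \eqref{eq:intro-moment-formulation} at some even exponent.

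\textbf{Sufficiency.} For $B=P_k$ with roots at its endpoints, $C_q(P_k)=C_{qk}$. Even cycles are norming, so for even $q$ every $C_q(B)$ is strongly dominating (seminorming graphs are strongly dominating by Conlon--Lee). The local inequality can also be checked directly. We have $\mathsf T_{P_k}^W=T_W^k$, and a subgraph $S$ either contains a root-to-root path, in which case $S=B$, or is disconnected. In the disconnected case $\mathsf T_S^W$ has rank at most one, equal to $f\otimes g$ with $f,g$ built from shorter path powers. One then needs
\[
\|f\|_2\|g\|_2\le\|T_W^k\|_{S_p}^{e(S)/k},
\]
which should follow from $\|T_W^j\one\|_2^2=\langle \one,T_W^{2j}\one\rangle\le\|T_W\|_{S_{2j}}^{2j}$ together with monotonicity of Schatten norms in $p$.

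For $B=\mathcal D_k$, each $C_4$ with opposite roots is $\mathcal B_2=K_{2,2}$, and $\mathsf T_{\mathcal D_k}^W=(\mathsf T_{\mathcal B_2}^W)^k$ with $\mathsf T_{\mathcal B_2}^W=(W\cdot W)^{\circ}$, which is the Hadamard square of $T_W^2$ and hence positive semidefinite. Here Theorem~\ref{thm:necklace-family} in the case $t=1$, with its Gram-factorization local inequalities valid at all integer exponents, should propagate along the chain by H\"older. For a product of block transfers we use $\|\prod_i A_i\|_{S_p}\le\prod_i\|A_i\|_{S_{kp}}$ and compare $S_{kp}$ with $S_p$ for the powers of a PSD operator. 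The norming claim comes from identifying $C_q(\mathcal D_k)=C_{qk}(\mathcal B_2)$, which is the norming reflection graph of Theorem~\ref{thm:necklace-family}. Since all $C_q(\mathcal B_2)$ are norming, there is no parity restriction on $q$.

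\textbf{Necessity.} This is the main obstacle. We first use outerplanarity to restrict the shape of $B$. A connected outerplanar $B$ has a block-cut tree, and root-reversibility forces a symmetric structure along the root-to-root path of blocks. Any pendant block or tree hanging off a cut vertex should be excluded by a kernel with $t_B(W)$ tiny while a subgraph keeps a large density, for instance $W$ nearly constant $\pm$ on parts so that the pendant part cancels. So $B$ should be a chain of 2-connected outerplanar blocks and bridges, each block being a cycle with chords in which the two attachment points are mutually symmetric.

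We would then show that each piece must be a single edge or a $C_4$ with antipodal attachment points, and that bridges and $C_4$'s cannot be mixed. Mixed chains should fail with a rank-one kernel $W=u\otimes u$ where $u$ has mean zero: such a kernel kills the transfer through an odd-path segment but not through a $C_4$, which gives the same parity mechanism as for odd $m$ in Theorem~\ref{thm:necklace-family}. The same rank-one test, or a two-point signed kernel, should rule out cycles of length other than $4$, unbalanced attachment points (paths of different lengths between the roots), and chords. For chords we use that deleting a chord leaves a subgraph $S$ whose transfer survives while the full transfer can be made small. The main work is a careful case analysis on outerplanar 2-connected blocks with a reflecting automorphism, choosing for each case an explicit low-rank signed $W$ and computing both sides of \eqref{eq:intro-moment-formulation} at $m=1$ or $m=2$. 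We expect to need only finitely many kernel templates.
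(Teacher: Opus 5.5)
Your sufficiency direction is correct and is essentially the paper's. $C_q(P_k)\cong C_{kq}$ and $C_q(\mathcal D_k)\cong R_{kq}$ are norming (Proposition~\ref{prop:Rn-reflection}), and Theorem~\ref{thm:exact-cyclic-source} turns this into universality. Your direct H\"older-chain check for $\mathcal D_k$ is also a valid alternative. It uses $\|\prod_i\mathsf T_{S_i}^W\|_{S_p}\le\prod_i\|\mathsf T_{S_i}^W\|_{S_{kp}}$, Theorem~\ref{thm:birooted-k22-certificate} at exponent $kp$, and $\|\mathsf L^k\|_{S_p}=\|\mathsf L\|_{S_{kp}}^k$ for positive semidefinite $\mathsf L$.

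In the direct $P_k$ check, bound $\|A_W^j\one\|_2$ and the scalar factors from middle components by $\|A_W\|_{\mathrm{op}}^j\le\|A_W\|_{S_{kp}}^j$. Your bound by $\|A_W\|_{S_{2j}}$ is too weak, because $2j<kp$ makes $\|A_W\|_{S_{2j}}\ge\|A_W\|_{S_{kp}}$.

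The genuine gap is necessity, which you leave as a programme. You never reduce the infinitely many root-reversible outerplanar blocks to a finite list, so ``finitely many kernel templates'' is unsupported. Two of the mechanisms you cite are also wrong:
\begin{enumerate}
\item Root-reversibility does not make each block symmetric about its own attachment vertices. The swap reverses the chain of blocks, sending the $i$th block to the $(k+1-i)$th.
\item A mean-zero rank-one kernel $u\otimes u$ does not kill transfer through a path segment: $\mathsf T_{P_j}^{u\otimes u}$ has kernel $(\E u^2)^{j-1}u(x)u(y)$. What kills a mixed edge/$C_4$ chain is its cut vertex of degree $3$, detected by a rank-one kernel with $\E f^3=0\ne\E f$ as in Lemma~\ref{lem:odddegree}.
\end{enumerate}

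The missing idea is that Theorem~\ref{thm:exact-cyclic-source} lets you work with the single graph $H=C_2(B)$ and compare it with \emph{arbitrary} subgraphs, not only the alternating ones encoded by \eqref{eq:intro-moment-formulation}. The graph-level witnesses of Section~\ref{sec:structural} then apply:
\begin{itemize}
\item Since $H$ contains a cycle, Theorem~\ref{thm:bridge} makes it bridgeless, hence bipartite with all degrees even (Corollary~\ref{cor:core}).
\item Corollary~\ref{cor:maxdegreecover}, via the additive kernel $\phi(x)+\phi(y)$ against $K_{1,\Delta}$, forbids any edge of $H$ whose endpoints both have degree below $\Delta(H)$.
\item Lemma~\ref{lem:two-terminal-outerplanar-ear} supplies such edges uniformly. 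A $2$-connected bipartite outerplanar non-cycle with even degrees off at most two ports has an edge with both ends of degree $2$ avoiding the ports.
\end{itemize}
With these tools, $\Delta(H)=2$ gives the path case. When $\Delta(H)\ge4$, they force the block-cut tree of $B$ to be a path between the roots. They also force each $2$-connected block to be a cycle covered by its two ports, hence an opposite-rooted $C_4$, and all blocks to be of the same type. Without this reduction, or an equivalent uniform detection principle, your necessity direction does not go through.
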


Thus outerplanar sources yield only norming cyclic families.  The blocks $\mathcal B_{2t}$ with $t\ge2$ cross the outerplanar boundary through a $K_{2,3}$ minor, while their cyclic closures acquire a $K_4$ minor.  The same signed obstructions also give a graph-level outerplanar classification.

\begin{theorem}[Outerplanar classification]\label{thm:outerplanar}
Let $H$ be a connected outerplanar graph with at least one edge.  Then $H$ is strongly dominating if and only if
\[
        H\cong K_2,
        \qquad H\cong P_{2s},
        \qquad H\cong K_{1,2s},
        \qquad\text{or}\qquad H\cong C_{2s},
\]
with $s\ge1$ in the path and star cases and $s\ge2$ in the cycle case.
\end{theorem}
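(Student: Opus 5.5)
We prove the two directions of Theorem~\ref{thm:outerplanar} separately. Sufficiency is mostly a matter of citing known examples. Necessity is a structural reduction that exhibits, for every other connected outerplanar $H$, a signed kernel $W$ and a subgraph $F\subseteq H$ with $p_F(W)>p_H(W)$.

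\textbf{Sufficiency.} $K_2$, even stars $K_{1,2s}$ and even cycles $C_{2s}$ with $s\ge2$ are seminorming. Stars satisfy $t_{K_{1,2s}}(W)=\|d_W\|_{2s}^{2s}$ with $d_W(x)=\int W(x,y)\,dy$, and cycles satisfy $t_{C_{2s}}(W)=\|\mathsf T_W\|_{S_{2s}}^{2s}$. Hence they are strongly dominating by the Conlon--Lee theorem. The even paths $P_{2s}$ are covered by Conlon--Lee's even-path result, or by Theorem~\ref{thm:outerplanar-source} applied to $P_k$ with $q=2$.

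\textbf{Necessity, reduction.} Suppose $H$ is strongly dominating. We extract local obstructions in four steps, each realised by a kernel that is a small perturbation of a chosen sign pattern.

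\begin{enumerate}[label=\textup{(\arabic*)}]
\item \emph{Even edge count.} Take $W\equiv-1$. Then $t_F(W)=(-1)^{e(F)}$, so this kernel alone does not separate subgraphs. Instead use $W=-1+\varepsilon U$ and compare $H$ with a single edge $K_2$. We expect the first-order term to force $t_H(W)\ne0$ in a way that rules out odd cycles and odd trees; the odd cases are handled by the rank-one kernel $W(x,y)=f(x)f(y)$ with $\int f=0$, which annihilates any graph having a leaf.
\item \emph{Trees.} If $H$ is a tree with a leaf, take $W=f\otimes f$ with $\int f=0$; then $t_H(W)=0$. Any subgraph $F$ without leaves in its non-trivial components (impossible in a tree) would have nonzero density, so this kernel does not separate within trees and a different argument is needed. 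Take $W=\mathbf 1\otimes\mathbf 1+\varepsilon(f\otimes f)$ and expand to the first order in $\varepsilon$ at which the density depends on $f$. For a tree, the coefficients count vertex-degree patterns. A tree containing two vertices of degree at least~$2$, other than a path, should lose to the substar at a branch vertex; likewise an odd path should lose to its even subpath. So the only surviving trees are even paths and even stars.
\item \emph{Cyclic $H$.} If $H$ contains a cycle, outerplanarity gives either a single cycle with pendant trees or at least two inner faces sharing a chord. A pendant edge attached to a cycle is killed by the kernel $f\otimes f$ with $\int f=0$ (so $t_H=0$), while the bare cycle survives with $t_{C}=\|f\|_2^{2k}>0$. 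The same kernel eliminates every pendant tree.
\item \emph{Two faces sharing a chord.} For $H$ with two faces sharing a chord (a theta subgraph), contract to the smallest $2$-connected outerplanar configuration. Use the signed obstruction from the odd case of Theorem~\ref{thm:necklace-family}: a rank-one signed kernel that annihilates the density of the full theta-shaped block but not that of the outer cycle. Odd cycles are excluded since $W=-J$ gives negative density for $C_{2s+1}$ while $p_{K_2}>0$ is not enough alone; compare instead with an even subpath $P_2$ using $W=-J+\varepsilon$.
\end{enumerate}

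\textbf{Main obstacle.} The hard step is the $2$-connected outerplanar case beyond a single cycle. A kernel must make the whole graph's density vanish, or be small, while keeping a cycle or path subgraph large, uniformly over all triangulation patterns of the outer polygon. I would try to handle it via the transfer-operator identity across a chord: split $H$ at the chord $\{u,v\}$ into two birooted pieces. Then $t_H(W)=\langle \mathsf T_{S_1}^W,\mathsf T_{S_2}^W\rangle$, with the chord edge weight inserted. Choose $W$ so that one piece's transfer kernel is orthogonal to the other's while the outer cycle's trace stays positive.
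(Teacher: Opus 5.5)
Your sufficiency direction is fine: $K_2$, even stars and even cycles via seminorming, and even paths via Proposition~\ref{prop:evenpaths}. The alternative route through Theorem~\ref{thm:outerplanar-source} does not work, because that theorem concerns $C_q(P_k)\cong C_{kq}$, which is a cycle rather than a path.

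The necessity direction has genuine gaps, because several of your kernels provably carry no information.
\begin{itemize}
\item A constant kernel $c$, including $W\equiv-1$ and $-J+\varepsilon$, has $p_F(c)=|c|$ for every $F$, so it cannot exclude odd cycles. What is needed is $W=1_{A\times B}+1_{B\times A}$, which kills every non-bipartite $H$ while $t_{K_2}(W)>0$. Step~(1) is also mislabelled, since $K_2$ has an odd number of edges.
\item For trees, $W=1+\varepsilon f\otimes f$ with $\int f=0$ gives
\[
t_F(W)=\sum_{E'\subseteq E(F)}\varepsilon^{|E'|}\prod_{v}\E f^{d_{E'}(v)}=1
\]
for every forest $F$, because every non-empty edge set of a forest has a vertex of degree $1$. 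There is no order at which $f$ appears, so ``non-path trees lose to a substar'' and ``odd paths lose to an even subpath'' have no witness.
\end{itemize}
The tree case needs four different tools:
\begin{itemize}
\item a two-point rank-one kernel with $\E f^d=0\ne\E f$ for odd internal degrees;
\item a biased Rademacher kernel giving $L/e(T)\le 2/\diam(T)$, which with the leaf--diameter inequality leaves only paths and equal-arm spiders;
\item a two-eigenvalue operator with $\langle\one,A^m\one\rangle=0$ for odd paths;
\item the additive kernel below, compared with $K_{1,3}$, for spiders.
\end{itemize}

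The cyclic case is where the key idea is missing. Your pendant-tree step is correct; it is the leaf--cycle lemma. Your dichotomy ``single cycle with pendant trees, or two faces sharing a chord'' is false, however.
\begin{itemize}
\item Two copies of $C_4$ joined by a bridge have no leaf, so the leaf--cycle kernel does not apply. You need the parity fact that a bridge forces an odd-degree vertex on each side, which the odd-degree obstruction then forces to be a leaf.
\item Two copies of $C_4$ sharing a vertex are bridgeless, leafless and chordless, with all degrees even.
\end{itemize}
Once all degrees are even, rank-one kernels are useless, since $t_H(f\otimes f)=\prod_v\E f^{d_v}>0$ for $f\ne0$. There are $2$-connected bipartite outerplanar non-cycles with all degrees even, for example a $12$-gon whose four chords bound an inner $4$-face. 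So the odd-$m$ rank-one obstruction from Theorem~\ref{thm:necklace-family} cannot annihilate them. Contracting to a minimal configuration is not legitimate, since strong domination is not minor-monotone. Your chord-orthogonality plan specifies no kernel.

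The missing ingredient is the additive two-atom kernel $W_M(x,y)=\phi(x)+\phi(y)$ with $\E\phi=0$, $\E\phi^2=1$ and $\E\phi^r\sim M^{r-2}$. Expanding edge by edge gives $t_G(W_M)=O(M^{e(G)-2\tau(G)})$, while $t_{K_{1,\Delta}}(W_M)=\E\phi^\Delta$. Hence a strongly dominating $H$ has $\tau(H)=e(H)/\Delta(H)$, so its maximum-degree vertices cover every edge. Combine this with the outerplanar ear lemma: a leaf face of the weak dual of a $2$-connected bipartite outerplanar non-cycle contains an edge joining two degree-$2$ vertices, and since there are at least two leaf faces, any prescribed vertex can be avoided. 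This excludes every $2$-connected non-cycle, where $\Delta\ge3$. It also excludes every bridgeless graph with a cut vertex: take a terminal block and an edge avoiding its cut vertex, which has degree at least $4$.
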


A two-atom additive kernel forces
\[
        \tau(H)=\frac{e(H)}{\Delta(H)},
\]
so the maximum-degree vertices of a strongly dominating graph form a vertex cover.  Together with parity witnesses and degree-$2$ ears, this proves both outerplanar classifications and shows that every failure in the class has a two-point witness.  Appendix~\ref{app:one-root} records the scalar one-root analogue and its eventual no-go result; it is not used in the cyclic theory.

The proof architecture is as follows.  Section~2 fixes terminology and the known boundary examples.  Section~3 proves the cyclic transfer criterion.  Section~4 treats $K_{2,m}$: the even case is split into endpoint, interpolation, diagonal and off-diagonal estimates, while the odd case has a rank-one obstruction.  Section~5 develops the signed structural witnesses and the two outerplanar classifications.  Section~6 gives open problems, and Appendix~\ref{app:one-root} contains the one-root comparison.

\section{Preliminaries}

Throughout, $P_m$ denotes the path with $m$ edges and $C_m$ the cycle with $m$ edges.  We use \emph{series-parallel} in the minor-theoretic sense: a graph is series-parallel if and only if it is $K_4$-minor-free.

All graphs are finite and simple unless explicitly stated otherwise.  In domination inequalities, subgraphs are spanning edge-subgraphs; isolated vertices play no role in homomorphism densities, so this convention loses no generality.  Kernels are identified up to null sets, and finite weighted step kernels represent finite probability-space models; see \cite{BorgsChayesLovaszSosVesztergombi,LovaszBook}.

\begin{definition}
Let $H$ be a graph with $e(H)\ge1$.  For a bounded symmetric real kernel $W$, define
\[
        p_H(W)=|t_H(W)|^{1/e(H)}.
\]
For graphs $H,F$ with at least one edge, write $H\succeq_sF$ if $p_H(W)\ge p_F(W)$ for every bounded symmetric real kernel $W$.  This is Conlon--Lee's \emph{strong domination} relation \cite[Section~6]{ConlonLeeDomination}.  A graph $H$ is \emph{strongly dominating} if $H\succeq_sF$ for every non-empty subgraph $F\subseteq H$.
\end{definition}

The definitions are invariant under scaling: $p_H(cW)=|c|p_H(W)$.  Thus every parameter-dependent witness may be rescaled into $[-1,1]$.

\subsection{Known positive mechanisms}

For the graph-norm notions in this subsection, we use the standard bipartite convention: after fixing a bipartition, each edge is oriented from the first class to the second, and $W$ ranges over bounded real kernels which need not be symmetric.  This is stronger than the restriction to symmetric kernels used in strong domination.  For edge-decorated kernels $(W_e)_{e\in E(H)}$, write $t_H((W_e))$ for the corresponding decorated density.  A graph $H$ is \emph{seminorming} if $W\mapsto |t_H(W)|^{1/e(H)}$ is a seminorm, \emph{norming} if this functional is a norm, and \emph{weakly norming} if $W\mapsto t_H(|W|)^{1/e(H)}$ is a norm.

\begin{remark}[Graph-norm terminology]\label{rem:graph-norm-terminology}
Conlon and Lee \cite{ConlonLeeReflection} used the word ``norming'' for what is called \emph{seminorming} here.  Every invocation of their reflection-graph theorem is translated into the present terminology; a separate non-degeneracy argument is supplied whenever the stronger conclusion ``norming'' is asserted.  Thus
\[
        \text{norming}\Longrightarrow\text{seminorming}
        \Longrightarrow\text{weakly norming}.
\]
The second implication is \cite[Observation~2.5(i)]{Hatami}.  For the component structure of these classes see \cite{GarbeHladkyLee}; the real- and complex-valued notions of norming coincide by \cite{LeeSidorenkoComplex}.
\end{remark}

Hatami's decorated H\"older characterization states that $H$ is seminorming exactly when
\begin{equation}\label{eq:decorated-holder}
        |t_H((W_e)_{e\in E(H)})|
        \le
        \prod_{e\in E(H)}p_H(W_e)
\end{equation}
for every signed decoration \cite[Theorem~2.8]{Hatami}.

\begin{proposition}[Seminorming certificate]\label{prop:seminorming-certificate}
Every seminorming graph is strongly dominating.
\end{proposition}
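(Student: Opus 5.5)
The plan is to apply Hatami's decorated H\"older inequality \eqref{eq:decorated-holder} with a decoration that switches edges of $F$ on and edges of $H$ off. Let $H$ be seminorming, and let $F\subseteq H$ be a non-empty spanning edge-subgraph. Fix a bounded symmetric real kernel $W$; by scaling we may assume $|W|\le 1$.

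\emph{Step 1: the decoration.} Put $W_e=W$ for $e\in E(F)$ and $W_e=U$ for $e\in E(H)\setminus E(F)$, where $U\equiv 1$ is the constant kernel. Because $U$ contributes a factor $1$ to every integrand, the decorated density is exactly $t_H((W_e))=t_F(W)$. The orientation convention in the bipartite setting is harmless here: $W$ is symmetric, so orienting each edge from the first class to the second does not change any factor.

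\emph{Step 2: apply \eqref{eq:decorated-holder}.} This gives
\[
|t_F(W)|\le p_H(W)^{e(F)}\,p_H(U)^{e(H)-e(F)}.
\]
Since $t_H(U)=1$, we have $p_H(U)=1$. Hence $|t_F(W)|\le p_H(W)^{e(F)}$. Taking $e(F)$-th roots yields $p_F(W)\le p_H(W)$, which is the required strong domination $H\succeq_s F$.

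\emph{Step 3: checking applicability.} The only point needing care is that Hatami's characterization is stated for bounded real, possibly non-symmetric, kernels under the bipartite convention. Our decoration uses only bounded kernels, so it is admissible. Seminorming graphs are bipartite (they are weakly norming), so the bipartite convention applies. Nothing beyond \eqref{eq:decorated-holder} is needed, and no step here is a genuine obstacle.
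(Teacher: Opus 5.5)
Your proof is correct and is the same as the paper's: put $W$ on $E(F)$ and the constant kernel $1$ on the remaining edges in Hatami's decorated H\"older inequality \eqref{eq:decorated-holder}, so that $t_H((W_e))=t_F(W)$ and $p_H(1)=1$. The preliminary rescaling to $|W|\le 1$ is unnecessary but harmless.
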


\begin{proof}
For a non-empty subgraph $F\subseteq H$, put $W_e=W$ on $E(F)$ and $W_e=1$ elsewhere in \eqref{eq:decorated-holder}.  Then $t_H((W_e))=t_F(W)$ and $p_H(1)=1$, so $|t_F(W)|\le p_H(W)^{e(F)}$.
\end{proof}

Weakly norming is not enough for signed domination: $K_{1,3}$ is weakly norming but is killed by the odd-degree rank-one obstruction below.  The converse separation is supplied by even paths.

The non-negative path-moment inequality has the classical Blakley--Roy inequality as an antecedent \cite{BlakleyRoy}.

\begin{proposition}[Even paths]\label{prop:evenpaths}
For every $s\ge1$, the path $P_{2s}$ is strongly dominating.
\end{proposition}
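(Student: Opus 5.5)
We need to prove that $P_{2s}$ strongly dominates every non-empty subgraph $F\subseteq P_{2s}$. The plan is to reduce to operator norms of the integral operator $T=\mathsf T_W$ with kernel $W$ on $L^2[0,1]$. Since $W$ is bounded and symmetric, $T$ is a self-adjoint Hilbert--Schmidt operator with real eigenvalues $\lambda_i$ and orthonormal eigenfunctions $\phi_i$. For a path with $k$ edges,
\[
t_{P_k}(W)=\langle \one, T^{k}\one\rangle=\sum_i \lambda_i^{k}\langle \one,\phi_i\rangle^2 .
\]
For $k=2s$ this is a non-negative combination of even powers.

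The first step treats $F$ itself a path, $F=P_k$ with $1\le k\le 2s$. Write $c_i=\langle\one,\phi_i\rangle^2\ge0$. Then $\sum_i c_i\le\|\one\|^2=1$, with the remainder coming from the kernel of $T$, which contributes nothing for $k\ge1$. This gives a sub-probability measure $\mu$ on eigenvalues, and
\[
|t_{P_k}(W)|\le\sum_i c_i|\lambda_i|^k=\int|\lambda|^k\,d\mu .
\]
By Lyapunov/Jensen monotonicity of $L^p(\mu)$ norms, since $\mu$ has total mass at most $1$ and $k\le 2s$,
\[
\Bigl(\int|\lambda|^k d\mu\Bigr)^{1/k}\le\Bigl(\int|\lambda|^{2s}d\mu\Bigr)^{1/(2s)}=t_{P_{2s}}(W)^{1/(2s)}.
\]
Hence $p_{P_k}(W)\le p_{P_{2s}}(W)$. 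The exponent $2s$ being even is exactly what makes $\int|\lambda|^{2s}d\mu=t_{P_{2s}}(W)$ without absolute-value loss.

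The second step handles a general spanning edge-subgraph $F$. Such an $F$ is a disjoint union of subpaths $P_{k_1},\dots,P_{k_r}$ with $\sum k_j=e(F)$, plus isolated vertices. Density is multiplicative over components, so
\[
|t_F(W)|=\prod_j|t_{P_{k_j}}(W)|\le\prod_j p_{P_{2s}}(W)^{k_j}=p_{P_{2s}}(W)^{e(F)},
\]
which is the claim.

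The main obstacle is justifying the spectral expansion and the mass bound $\sum c_i\le1$ for a general bounded kernel. This follows from the spectral theorem for compact self-adjoint operators together with Bessel's inequality: $T^k\one=\sum\lambda_i^k\langle\one,\phi_i\rangle\phi_i$ converges in $L^2$ for $k\ge1$. The identity $t_{P_k}(W)=\langle\one,T^k\one\rangle$ follows by Fubini, since $W$ is bounded.
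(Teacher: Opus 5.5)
Your proposal is correct and follows essentially the same route as the paper: write $t_{P_k}(W)=\langle\one,T^k\one\rangle$ as the $k$th moment of the spectral measure of $T$ at $\one$, apply Lyapunov to compare with the non-negative even moment $t_{P_{2s}}(W)=\|T^s\one\|_2^2$, and then multiply over the path components of $F$. The only difference is cosmetic: you use the eigenvalue expansion with a sub-probability measure that omits the kernel of $T$, whereas the paper keeps the atom at $0$ so that $\nu$ is a probability measure.
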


\begin{proof}
Let $A=A_W$ be the self-adjoint integral operator induced by $W$, and let $\nu$ be its probability spectral measure at $\one$.  Then
\[
        t_{P_j}(W)=\int\lambda^j\,d\nu(\lambda),
        \qquad
        t_{P_{2s}}(W)=\|A^s\one\|_2^2.
\]
For $0\le j\le2s$, Lyapunov's inequality gives
\[
        |t_{P_j}(W)|
        \le
        \left(\int|\lambda|^{2s}\,d\nu\right)^{j/(2s)}
        =t_{P_{2s}}(W)^{j/(2s)}.
\]
A subgraph of a path is a disjoint union of paths, and densities multiply over components.
\end{proof}

\begin{proposition}[Even stars]\label{prop:evenstars}
For every $s\ge1$, the star $K_{1,2s}$ is strongly dominating.
\end{proposition}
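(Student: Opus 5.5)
Write $d(x)=\int W(x,y)\,dy$ for the degree function of $W$. Then the star density is
\[
 t_{K_{1,k}}(W)=\int_0^1 d(x)^k\,dx .
\]
A subgraph $F\subseteq K_{1,2s}$ with $j\ge1$ edges is a star $K_{1,j}$ together with isolated vertices, and these contribute a factor $1$. So $t_F(W)=\int d^j$, and it suffices to show
\[
 \Bigl|\int d^j\Bigr|^{1/j}\le\Bigl(\int d^{2s}\Bigr)^{1/(2s)}\qquad(1\le j\le 2s).
\]
The right-hand side is well defined because $2s$ is even, so $t_{K_{1,2s}}(W)=\int d^{2s}=\|d\|_{2s}^{2s}\ge0$.

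The proof is a one-line Lyapunov/Jensen step, parallel to Proposition~\ref{prop:evenpaths}. First, $|\int d^j|\le\int |d|^j=\|d\|_j^j$. Second, since $[0,1]$ carries a probability measure, $\|d\|_j\le\|d\|_{2s}$ for $j\le 2s$, by monotonicity of $L^p$ norms (Hölder). Combining the two gives
\[
 p_F(W)=\Bigl|\int d^j\Bigr|^{1/j}\le\|d\|_{2s}=p_{K_{1,2s}}(W).
\]

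Boundedness of $W$ makes $d$ bounded, so all integrals are finite and Fubini applies to the identity $t_{K_{1,k}}(W)=\int d^k$. There is no real obstacle here. The only point needing care is the reduction that spanning subgraphs of a star are smaller stars plus isolated vertices. This holds because every edge of $K_{1,2s}$ contains the centre, so any nonempty edge subset is itself a star centred there.
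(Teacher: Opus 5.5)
Your proof is correct and matches the paper's argument. The paper also writes $t_{K_{1,j}}(W)=\int g^j$ for the degree function $g$, applies $\bigl|\int g^j\bigr|\le\|g\|_{L^j}^j\le\|g\|_{L^{2s}}^j$, and notes that every subgraph of a star is a smaller star plus isolated vertices.
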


\begin{proof}
With $g(x)=\int W(x,y)\,dy$, one has $t_{K_{1,j}}(W)=\int g^j$.  For $0\le j\le2s$,
\[
        \left|\int g^j\right|
        \le \|g\|_{L^j}^{j}
        \le \|g\|_{L^{2s}}^{j}
        =t_{K_{1,2s}}(W)^{j/(2s)}.
\]
A subgraph of a star is a smaller star together with isolated vertices, so this proves the claim.  This is also the classical H\"older certificate underlying the even-star seminorm.
\end{proof}

\begin{proposition}[Even cycles]\label{prop:evencycles}
For every $s\ge2$, the cycle $C_{2s}$ is strongly dominating.
\end{proposition}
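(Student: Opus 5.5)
The cycle $C_{2s}$ is a standard example of a norming graph. So the shortest route is to note that $C_{2s}$ is seminorming for $s\ge2$ and then invoke Proposition~\ref{prop:seminorming-certificate}. To keep the argument self-contained and in the spirit of the paper, I would instead give a direct spectral proof modelled on Proposition~\ref{prop:evenpaths}.

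Let $A=A_W$ be the self-adjoint Hilbert--Schmidt operator on $L^2[0,1]$ induced by $W$, with real eigenvalues $(\lambda_i)$. The only proper non-empty subgraphs $F\subseteq C_{2s}$ are disjoint unions of paths. The full cycle has
\[
t_{C_{2s}}(W)=\tr A^{2s}=\sum_i\lambda_i^{2s}=\|A\|_{S_{2s}}^{2s}\ge0,
\]
which is valid for $2s\ge2$; for $2s=2$ the trace is the Hilbert--Schmidt norm squared. Because densities multiply over components, it suffices to prove $|t_{P_j}(W)|\le t_{C_{2s}}(W)^{j/(2s)}$ for every $1\le j\le 2s-1$, and then multiply these bounds over the components, whose edge counts sum to $e(F)$.

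For a single path I would use the spectral measure $\nu$ of $A$ at $\one$, as in the proof of Proposition~\ref{prop:evenpaths}. This gives $t_{P_j}(W)=\int\lambda^j\,d\nu$, and Lyapunov yields $|t_{P_j}(W)|\le(\int|\lambda|^{2s}d\nu)^{j/(2s)}=\|A^s\one\|_2^{2j/(2s)}$. It remains to compare $\|A^s\one\|_2^2$ with $\tr A^{2s}$, that is, to show $\langle A^{2s}\one,\one\rangle\le\tr A^{2s}$. Since $A^{2s}$ is positive semidefinite and $\|\one\|_2=1$,
\[
\langle A^{2s}\one,\one\rangle\le\|A^{2s}\|_{\mathrm{op}}=\max_i\lambda_i^{2s}\le\sum_i\lambda_i^{2s}.
\]
This combination proves the claim.

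The one point needing care is that $A$ is compact, so the spectral calculus and the trace formula apply. This holds because $W$ is bounded, which makes $A$ Hilbert--Schmidt, and because $2s\ge4$ makes $A^{2s}$ trace class. I expect no real obstacle beyond this. The restriction $s\ge2$ only rules out the degenerate multigraph $C_2$; even $s=1$ would work analytically, since the bound $\max_i\lambda_i^{2}\le\sum_i\lambda_i^{2}$ needs only Hilbert--Schmidt.
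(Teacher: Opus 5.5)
Your argument is correct and is essentially the paper's proof. Both write $t_{P_j}(W)=\langle\one,A^j\one\rangle$, bound it by $\|A\|_{\mathrm{op}}^j$, use $\|A\|_{\mathrm{op}}\le\|A\|_{S_{2s}}=t_{C_{2s}}(W)^{1/(2s)}$, and multiply over path components; your extra Lyapunov step through $\|A^s\one\|_2^2=t_{P_{2s}}(W)$ routes the bound via $C_{2s}\succeq_s P_{2s}\succeq_s P_j$, but it is not needed, since $|\langle\one,A^j\one\rangle|\le\|A\|_{\mathrm{op}}^j$ holds directly.
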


\begin{proof}
Let $A=A_W$.  Since $A$ is self-adjoint,
\[
        t_{C_{2s}}(W)=\operatorname{tr}(A^{2s})=\|A\|_{S_{2s}}^{2s}.
\]
For every $0\le j\le 2s$,
\[
        |t_{P_j}(W)|
        =|\langle\one,A^j\one\rangle|
        \le \|A\|_{\mathrm{op}}^j
        \le \|A\|_{S_{2s}}^j
        =t_{C_{2s}}(W)^{j/(2s)}.
\]
Every proper subgraph $F\subset C_{2s}$ is a disjoint union of paths, say with edge lengths $j_1,\ldots,j_k$.  Multiplicativity over components therefore gives
\[
        |t_F(W)|
        \le t_{C_{2s}}(W)^{(j_1+\cdots+j_k)/(2s)}
        =t_{C_{2s}}(W)^{e(F)/(2s)}.
\]
This is the Schatten-norm example in graph norm theory \cite{Hatami,ConlonLeeReflection,SimonTraceIdeals}.
\end{proof}

Conlon and Lee record norming graphs and even paths as the previously known strongly dominating mechanisms \cite[Section~6]{ConlonLeeDomination}.  Even paths of length at least four lie outside the weakly norming class because weakly norming graphs are edge-transitive \cite[Theorem~1]{SidorenkoEdgeTransitive}.

\section{Cyclic amalgamation and spectral sources}\label{subsec:cyclic-amalgamation}

A \emph{birooted graph} is a triple $(B,\ell,r)$ with two distinguished vertices, called the left and right roots.  A birooted subgraph retains both roots as isolated vertices when necessary.  For a bounded symmetric kernel $W$, let
\[
        F_S^W(x,y)
\]
be the density of a birooted subgraph $S$ with its left and right roots fixed at $x$ and $y$, and let $\mathsf T_S^W$ be the integral operator on $L^2([0,1])$ with kernel $F_S^W$.  Since $W$ is bounded and $S$ is finite, $F_S^W$ is bounded; hence every $\mathsf T_S^W$ is Hilbert--Schmidt.  A product of at least two such operators is trace class, so all cyclic traces below are well defined.  If $S^{\mathrm{rev}}$ is obtained from $S$ by interchanging its roots, then symmetry of $W$ gives
\[
        F_{S^{\mathrm{rev}}}^W(x,y)=F_S^W(y,x),
        \qquad
        \mathsf T_{S^{\mathrm{rev}}}^W=(\mathsf T_S^W)^*.
\]

For the empty birooted subgraph, $F_\varnothing^W\equiv1$ and $\mathsf T_\varnothing^W$ is the rank-one averaging projection, whose Schatten norm is $1$ at every exponent.  Thus every local inequality with $S=\varnothing$ is automatic.  We state the substantive hypotheses for non-empty $S$, while allowing empty restrictions when a global subgraph is decomposed block by block.

\begin{lemma}[Cyclic trace formula for kernel operators]\label{lem:cyclic-trace-formula}
Let $q\ge2$, and let $A_1,\ldots,A_q$ be Hilbert--Schmidt integral operators on $L^2(\Omega)$, where $\Omega$ is a finite measure space, with kernels $a_1,\ldots,a_q\in L^2(\Omega^2)$.  Then $A_1\cdots A_q$ is trace class and
\begin{equation}\label{eq:abstract-cyclic-trace}
 \operatorname{tr}(A_1\cdots A_q)
 =\int_{\Omega^q}\prod_{i=1}^q a_i(x_i,x_{i+1})\,d\mathbf x,
 \qquad x_{q+1}=x_1.
\end{equation}
\end{lemma}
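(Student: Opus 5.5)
The plan is to split the product as $A_1\cdot(A_2\cdots A_q)$, reduce the trace to the Hilbert--Schmidt pairing of these two factors, and then express that pairing as an integral of kernels. Since $q\ge2$, the product $A_1\cdots A_q$ is the composition of the Hilbert--Schmidt operator $A_1$ with $B:=A_2\cdots A_q$. Because $\Omega$ has finite measure, the Hilbert--Schmidt operators form an ideal in the bounded operators, so $B$ is Hilbert--Schmidt. A product of two Hilbert--Schmidt operators is trace class, which gives the first assertion. For the formula I would invoke the standard identity
\[
\operatorname{tr}(A_1B)=\int_{\Omega^2}a_1(x,y)\,b(y,x)\,dx\,dy,
\]
valid whenever $A_1$ and $B$ are Hilbert--Schmidt with kernels $a_1$ and $b$. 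This identity is just the Hilbert--Schmidt inner product $\langle A_1,B^*\rangle_{S_2}$ computed in kernel coordinates: expand in an orthonormal basis and apply Parseval on $L^2(\Omega^2)$.

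The next step is to identify the kernel $b$ of $B$ by induction on the number of factors. For two Hilbert--Schmidt operators with kernels $a,c\in L^2(\Omega^2)$, the composition has kernel $(ac)(x,z)=\int a(x,y)c(y,z)\,dy$. This integral converges absolutely for almost every $(x,z)$ by Cauchy--Schwarz, since $\int|a(x,y)|^2dy<\infty$ for a.e.\ $x$, and likewise for $c$. To check it is the kernel, apply $AC$ to $f\in L^2$ and use Fubini, which is justified by the integrability bound $\int\!\int\!\int|a(x,y)||c(y,z)||f(z)|\,dz\,dy\,dx\le\|a\|_2\|c\|_2\|f\|_2\,\mu(\Omega)^{1/2}$. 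Iterating gives
\[
b(x_2,x_1)=\int_{\Omega^{q-2}}a_2(x_2,x_3)\cdots a_q(x_q,x_1)\,dx_3\cdots dx_q .
\]
Substituting this into the trace identity produces \eqref{eq:abstract-cyclic-trace}.

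The main obstacle is justifying the final Fubini step. To collapse the iterated integral into one integral over $\Omega^q$, I would show absolute integrability of $\prod_i|a_i(x_i,x_{i+1})|$. This follows by running the same argument with $|a_i|$ in place of $a_i$. The kernels $|a_i|$ are still in $L^2$, so the iterated integral of the absolute values equals $\langle |A_1|_{\rm ker},(|A_2|_{\rm ker}\cdots)^*\rangle$, where $|A_i|_{\rm ker}$ denotes the operator with kernel $|a_i|$. That pairing is finite by Cauchy--Schwarz in $S_2$ together with the Hilbert--Schmidt norm bound on products. The care needed is only in recording that every intermediate kernel is in $L^2(\Omega^2)$. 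In the paper's applications the kernels are bounded and $\Omega=[0,1]$, so integrability is immediate in any case.
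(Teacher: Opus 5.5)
Your proof is correct, but it follows a different route from the paper. The paper argues by density. It checks the identity for finite-rank step kernels, where it is the matrix trace identity. It then approximates each $a_i$ in $L^2(\Omega^2)$ and passes to the limit on both sides: a telescoping expansion with Schatten H\"older (exponents $2,2,\infty,\ldots,\infty$) handles the traces, and repeated Cauchy--Schwarz handles the cycle integrals.

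You avoid approximation entirely. You write the product as $A_1B$ and read $\operatorname{tr}(A_1B)$ as a Hilbert--Schmidt pairing in kernel coordinates. You identify the kernel of $B$ by iterating the composition formula. You then collapse the iterated integral to one over $\Omega^q$ by Tonelli, using the majorant $\int_{\Omega^q}\prod_i|a_i(x_i,x_{i+1})|\,d\mathbf x\le\prod_i\|a_i\|_{L^2}$, which comes from running the same argument with $|a_i|$.

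That majorant is essentially the paper's ``repeated Cauchy--Schwarz'' estimate, so both proofs rest on the same core bound. You use it once, pointwise, and as a by-product you show that the right-hand side of \eqref{eq:abstract-cyclic-trace} is an absolutely convergent Lebesgue integral. The paper uses it as a continuity estimate and additionally needs density of step kernels and convergence of the approximating traces.

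One small inaccuracy: the ideal property of Hilbert--Schmidt operators does not depend on $\Omega$ having finite measure. Finiteness enters your argument only in the global Fubini bound for the composition kernel. Even there it can be dropped by verifying the kernel identity for almost every fixed $x$, using $\int\!\!\int|a(x,y)||c(y,z)||f(z)|\,dz\,dy\le\|a(x,\cdot)\|_{2}\|c\|_{2}\|f\|_{2}$.
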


\begin{proof}
For finite-rank step kernels, \eqref{eq:abstract-cyclic-trace} is the usual matrix trace identity.  Approximate each $a_i$ in $L^2(\Omega^2)$ by such kernels; the corresponding operators converge in $S_2$.  The product is trace class because the product of two $S_2$ operators lies in $S_1$ and every remaining factor is bounded, with $\|A\|_{\mathrm{op}}\le\|A\|_{S_2}$.  A telescoping expansion, together with the Schatten H\"older inequality using exponents $2,2,\infty,\ldots,\infty$, shows that the traces of the approximating products converge.  The cycle integrals converge by the corresponding repeated Cauchy--Schwarz estimate in the kernel variables.  Passing to the limit proves the formula.
\end{proof}

For $q\ge2$, let $C_q(B)$ be the cyclic amalgam obtained from copies $B_1,\ldots,B_q$ by identifying the right root of $B_i$ with the left root of $B_{i+1}$, with indices read modulo $q$.  We work with simple graphs and assume that these identifications create no parallel edges.  If $q=2$ and $\ell r\in E(B)$, the two copies of the root edge are identified with the same unordered vertex pair and produce a double edge; this is why statements involving $C_2(B)$ assume that the roots are non-adjacent.

The trace factorization gives the following operator-valued criterion.

\begin{theorem}[Cyclic birooted Schatten criterion]\label{thm:cyclic-schatten-criterion}
Let $(B,\ell,r)$ be a birooted graph with $b=e(B)\ge1$, and let $q\ge2$.  Assume that the identifications defining $C_q(B)$ create no parallel edges; in particular, if $q=2$, assume that $\ell r\notin E(B)$.
\begin{enumerate}[label=\textup{(\roman*)}]
\item Suppose that $\mathsf T_B^W$ is positive semidefinite for every bounded symmetric real kernel $W$.  If
\begin{equation}\label{eq:cyclic-local-certificate}
        \|\mathsf T_S^W\|_{S_q}
        \le
        \|\mathsf T_B^W\|_{S_q}^{e(S)/b}
\end{equation}
for every non-empty birooted subgraph $S\subseteq B$ and every such $W$, then $C_q(B)$ is strongly dominating and
\[
        p_{C_q(B)}(W)=\|\mathsf T_B^W\|_{S_q}^{1/b}.
\]
\item Suppose that $q$ is even and $B$ has an automorphism interchanging its two roots.  Then $C_q(B)$ is strongly dominating if and only if \eqref{eq:cyclic-local-certificate} holds for every non-empty $S$ and every $W$.  The same formula for $p_{C_q(B)}$ holds.
\end{enumerate}
\end{theorem}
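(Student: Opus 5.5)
The proof rests on the trace formula. For a spanning subgraph $F\subseteq C_q(B)$, let $S_i\subseteq B$ be its restriction to the $i$th copy, with empty restrictions allowed. We first integrate out the non-root vertices of each copy, then apply Lemma~\ref{lem:cyclic-trace-formula} to the shared root variables $x_1,\dots,x_q$. This gives
\[
 t_F(W)=\operatorname{tr}\bigl(\mathsf T_{S_1}^W\cdots\mathsf T_{S_q}^W\bigr).
\]
The no-parallel-edge hypothesis guarantees that every edge of $C_q(B)$ lies in exactly one copy, so the restrictions $S_i$ are well defined and $e(F)=\sum_i e(S_i)$. In the special case $F=C_q(B)$ we get $t_{C_q(B)}(W)=\operatorname{tr}((\mathsf T_B^W)^q)$.

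For (i), positive semidefiniteness gives $\operatorname{tr}((\mathsf T_B^W)^q)=\|\mathsf T_B^W\|_{S_q}^q\ge0$, and hence $p_{C_q(B)}(W)=\|\mathsf T_B^W\|_{S_q}^{1/b}$. For a general $F$ we use the trace inequality $|\operatorname{tr}(X)|\le\|X\|_{S_1}$ together with the Schatten H\"older inequality at exponents $(q,\dots,q)$:
\[
 |t_F(W)|\le\prod_i\|\mathsf T_{S_i}^W\|_{S_q}.
\]
Empty factors contribute $1$, since the averaging projection has Schatten norm $1$. Each non-empty factor is at most $\|\mathsf T_B^W\|_{S_q}^{e(S_i)/b}$ by \eqref{eq:cyclic-local-certificate}. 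Multiplying, we obtain $|t_F(W)|\le p_{C_q(B)}(W)^{e(F)}$, which is strong domination.

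For (ii), let $q$ be even and let $\sigma$ be an automorphism of $B$ swapping the roots. Then $\mathsf T_B^W=\mathsf T_{B^{\mathrm{rev}}}^W=(\mathsf T_B^W)^*$, so $\mathsf T_B^W$ is self-adjoint and compact. Since $q$ is even,
\[
 \operatorname{tr}((\mathsf T_B^W)^q)=\sum_j\lambda_j^q=\|\mathsf T_B^W\|_{S_q}^q,
\]
and the formula for $p_{C_q(B)}$ holds without any positivity assumption. Sufficiency then follows exactly as in (i).

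For necessity, fix a non-empty $S$. The main obstacle is realizing $\|\mathsf T_S^W\|_{S_q}^q$ as the density of a subgraph of $C_q(B)$. Write $T=\mathsf T_S^W$ and note $\|T\|_{S_q}^q=\operatorname{tr}((T^*T)^{q/2})$. We build $F$ by placing $\sigma(S)$ in the odd-indexed copies and $S$ in the even-indexed copies. Since $\sigma$ swaps the roots, $\mathsf T_{\sigma(S)}^W=\mathsf T_{S^{\mathrm{rev}}}^W=T^*$. Then $t_F(W)=\operatorname{tr}((T^*T)^{q/2})=\|T\|_{S_q}^q$ and $e(F)=q\,e(S)$. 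Strong domination $p_F\le p_{C_q(B)}$ now reads
\[
 \|T\|_{S_q}^{q/(q e(S))}\le\|\mathsf T_B^W\|_{S_q}^{q/(qb)},
\]
which is exactly \eqref{eq:cyclic-local-certificate}. Two points need checking here: that $F$ is a genuine subgraph of $C_q(B)$, with images under $\sigma$ lying in the same copy, and that the orientations of consecutive copies alternate correctly. Both follow because the reversal of a birooted subgraph corresponds to taking adjoints, as recorded above.
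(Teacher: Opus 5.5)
Your proposal is correct and follows the paper's proof essentially step for step. It uses the cyclic trace factorization from Lemma~\ref{lem:cyclic-trace-formula}, Schatten H\"older with empty restrictions kept as norm-one averaging projections, self-adjointness of $\mathsf T_B^W$ from the root-swapping automorphism (so that even $q$ gives $\operatorname{tr}((\mathsf T_B^W)^q)=\|\mathsf T_B^W\|_{S_q}^q$), and the alternating placement of $S$ and its automorphic image to realize $\|\mathsf T_S^W\|_{S_q}^q$ as a subgraph density. The only cosmetic difference is that your ordering yields $\operatorname{tr}((T^*T)^{q/2})$ rather than the paper's $\operatorname{tr}((TT^*)^{q/2})$, and these two traces are equal.
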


\begin{proof}
If a subgraph $F\subseteq C_q(B)$ restricts to birooted subgraphs $S_1,\ldots,S_q$ in the consecutive blocks, then Lemma~\ref{lem:cyclic-trace-formula} gives
\begin{equation}\label{eq:cyclic-trace-factorization}
        t_F(W)
        =\tr\bigl(\mathsf T_{S_1}^W\cdots \mathsf T_{S_q}^W\bigr).
\end{equation}
The Schatten H\"older inequality \cite{SimonTraceIdeals} gives
\[
        |t_F(W)|
        \le \prod_{i=1}^q\|\mathsf T_{S_i}^W\|_{S_q}.
\]
Empty restrictions are retained as averaging projections in the operator product.  Each such projection has $S_q$-norm $1$, so only its numerical norm factor and its zero edge contribution are suppressed in the H\"older bound; the projection itself is not deleted from the trace product.  Thus the hypotheses for non-empty restrictions suffice, and no zero-exponent convention is needed.

Under the hypothesis in part~(i),
\[
        t_{C_q(B)}(W)
        =\tr\bigl((\mathsf T_B^W)^q\bigr)
        =\|\mathsf T_B^W\|_{S_q}^q.
\]
Combining this identity, \eqref{eq:cyclic-local-certificate}, and \eqref{eq:cyclic-trace-factorization} proves sufficiency and the formula for the full density.

For part~(ii), let $\phi$ be an automorphism interchanging the two roots.  The adjoint identity gives $\mathsf T_{B^{\mathrm{rev}}}^W=(\mathsf T_B^W)^*$, while $\phi$ identifies $B^{\mathrm{rev}}$ with $B$ as a birooted graph.  Hence $\mathsf T_B^W=(\mathsf T_B^W)^*$, so the full transfer is self-adjoint.  Since $q$ is even,
\[
        \tr\bigl((\mathsf T_B^W)^q\bigr)=\|\mathsf T_B^W\|_{S_q}^q,
\]
so the same argument proves sufficiency.  For necessity, fix $S\subseteq B$.  The image $\phi(S)$ is an embedded copy of $S$ with the boundary labels exchanged, and the transfer identity is
\[
        \mathsf T_{\phi(S)}^W
        =\mathsf T_{S^{\mathrm{rev}}}^W
        =(\mathsf T_S^W)^*.
\]
Place $S$ and $\phi(S)$ alternately around the cycle.  The resulting subgraph has density
\[
        \tr\bigl((\mathsf T_S^W(\mathsf T_S^W)^*)^{q/2}\bigr)
        =\|\mathsf T_S^W\|_{S_q}^q
\]
and has $q e(S)$ edges.  Domination by the full cyclic amalgam is therefore exactly \eqref{eq:cyclic-local-certificate}.
\end{proof}

The preceding criterion can be reversed simultaneously over all even exponents, giving an analytic characterization of the blocks which generate an infinite even cyclic family by the Schatten mechanism.

The universal even-Schatten condition was defined in the introduction.  We now prove the converse across all even exponents.

\begin{proof}[Proof of Theorem~\ref{thm:exact-cyclic-source}]
If (i) holds, Theorem~\ref{thm:cyclic-schatten-criterion}(ii) gives (ii) for every even $q$.

Conversely, assume (ii), and fix an even $q$, a non-empty birooted subgraph $S\subseteq B$, and a kernel $W$.  Let $\phi$ be a root-swapping automorphism.  Then $\phi(S)$ is an embedded copy with the boundary labels exchanged and
\[
        \mathsf T_{\phi(S)}^W
        =\mathsf T_{S^{\mathrm{rev}}}^W
        =(\mathsf T_S^W)^*.
\]
Place $S$ and $\phi(S)$ alternately in the $q$ blocks.  By this identity, the resulting subgraph $F\subseteq C_q(B)$ has $qe(S)$ edges and
\[
        t_F(W)=\tr\!\left((\mathsf T_S^W(\mathsf T_S^W)^*)^{q/2}\right)
              =\|\mathsf T_S^W\|_{S_q}^{q}.
\]
Root reversal makes $\mathsf T_B^W$ self-adjoint, and hence
\[
        t_{C_q(B)}(W)=\tr((\mathsf T_B^W)^q)
                     =\|\mathsf T_B^W\|_{S_q}^{q}.
\]
Strong domination of $F$ gives \eqref{eq:intro-universal-schatten} at exponent $q$.  Since every even exponent occurs, (i) follows.  The final assertion is Theorem~\ref{thm:cyclic-schatten-criterion}(i), applied separately at each exponent.  Finally, \eqref{eq:intro-moment-formulation} is simply the identity
\[
        \|A\|_{S_{2m}}=\tr(|A|^{2m})^{1/(2m)}
\]
written after clearing the positive powers in the local inequality.
\end{proof}

\begin{remark}\label{rem:source-theorem-scope}
Theorem~\ref{thm:exact-cyclic-source} is an analytic characterization rather than an enumeration of the finite combinatorial shapes of all admissible blocks.  It converts that remaining problem into a local spectral one.  In particular, failure at one even spectral moment produces an explicit cyclic amalgam which is not strongly dominating.
\end{remark}

\subsection{The norming boundary block}

Before turning to the non-norming block, we isolate the basic source which reconnects the transfer-operator criterion with reflection-graph norms.  Let $\mathcal Q$ be a copy of $K_{2,2}$ whose two roots $\ell,r$ form one bipartition class, and write $a,b$ for the other two vertices.

\begin{theorem}[Uniform certificate for the two-rooted $K_{2,2}$ block]\label{thm:birooted-k22-certificate}
For every integer $p\ge2$, every bounded symmetric real kernel $W$, and every non-empty birooted subgraph $S\subseteq\mathcal Q$,
\begin{equation}\label{eq:k22-schatten-local}
        \|\mathsf T_S^W\|_{S_p}
        \le
        \|\mathsf T_{\mathcal Q}^W\|_{S_p}^{e(S)/4}.
\end{equation}
Moreover, $\mathsf T_{\mathcal Q}^W$ is positive semidefinite for every $W$.  Consequently $C_q(\mathcal Q)$ is strongly dominating for every integer $q\ge2$.
\end{theorem}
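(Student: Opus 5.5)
The plan is to write every transfer kernel on $\mathcal Q$ as a Gram kernel, so that each $\mathsf T_S^W$ factors as $L^*R$. The Schatten H\"older inequality then reduces all cases to a single comparison between $A^2$ and $\mathsf T_{\mathcal Q}^W$.

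\textbf{Gram form of the kernels.} Put $A=A_W$ and let $u_x=W(x,\cdot)\in L^2([0,1])$. Integrating out a middle vertex $c\in\{a,b\}$ gives a factor $\langle\alpha_x,\gamma_y\rangle$. Here $\alpha_x=u_x$ if $\ell c\in E(S)$ and $\alpha_x=\one$ otherwise; similarly $\gamma_y=u_y$ or $\one$ according to whether $cr\in E(S)$. The four possible values are $1$, $g(x)$, $g(y)$ and $k(x,y)$, where $g(x)=\int W(x,z)\,dz$ and $k$ is the kernel of $A^2$. Hence
\[
 F_S^W(x,y)=\langle \alpha_x\otimes\beta_x,\ \gamma_y\otimes\delta_y\rangle_{L^2([0,1]^2)},
\]
with $\alpha,\gamma$ coming from $a$ and $\beta,\delta$ from $b$. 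Define $L,R:L^2([0,1])\to L^2([0,1]^2)$ by $Lf=\int f(x)\,\alpha_x\otimes\beta_x\,dx$, and similarly for $R$. Then $\mathsf T_S^W=L^*R$.

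Taking $S=\mathcal Q$ gives $\mathsf T_{\mathcal Q}^W=V^*V$ with $V: f\mapsto\int f(x)\,u_x\otimes u_x\,dx$. So $\mathsf T_{\mathcal Q}^W$ is positive semidefinite; equivalently, its kernel $k^2$ is a Schur square of the positive kernel $k$. This is the positivity claim.

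\textbf{Reduction by H\"older.} For $S\subseteq\mathcal Q$, Schatten H\"older gives
\[
\|\mathsf T_S^W\|_{S_p}\le\|L\|_{S_{2p}}\|R\|_{S_{2p}}=\|L^*L\|_{S_p}^{1/2}\|R^*R\|_{S_p}^{1/2}.
\]
The operator $L^*L$ has kernel $\langle\alpha_x,\alpha_y\rangle\langle\beta_x,\beta_y\rangle$, which is one of three things.
\begin{enumerate}[label=\textup{(\alph*)}]
\item $k^2$, i.e.\ $\mathsf T_{\mathcal Q}^W$, when both left edges $\ell a,\ell b$ are present.
\item $k$, i.e.\ $A^2$, when exactly one is present.
\item $1$, the averaging projection, which has $S_p$-norm $1$, when neither is present.
\end{enumerate}
The same holds for $R^*R$ and the right edges. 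Write $j_L$ and $j_R$ for the numbers of left and right edges of $S$, so $j_L+j_R=e(S)$. Then \eqref{eq:k22-schatten-local} follows once we prove
\[
 \|A^2\|_{S_p}\le\|\mathsf T_{\mathcal Q}^W\|_{S_p}^{1/2}.
\]
Indeed, the left factor then contributes $\|\mathsf T_{\mathcal Q}^W\|_{S_p}^{j_L/4}$, the right factor contributes $\|\mathsf T_{\mathcal Q}^W\|_{S_p}^{j_R/4}$, and the exponents sum to $e(S)/4$. Only the Gram structure is used, so no case analysis over the subgraphs is needed.

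\textbf{The key comparison.} This is the step I expect to be the main point. Both operators are positive, so for integer $p\ge2$ the needed inequality is $\operatorname{tr}(A^{2p})^2\le\operatorname{tr}((\mathsf T_{\mathcal Q}^W)^p)$. By Lemma~\ref{lem:cyclic-trace-formula},
\[
\operatorname{tr}((\mathsf T_{\mathcal Q}^W)^p)=\int\prod_{i=1}^p k(x_i,x_{i+1})^2\,d\mathbf x,
\qquad
\operatorname{tr}(A^{2p})=\int\prod_{i=1}^p k(x_i,x_{i+1})\,d\mathbf x .
\]
Cauchy--Schwarz on the probability space $[0,1]^p$ gives $\bigl(\int\prod k\bigr)^2\le\int\prod k^2$, which is exactly the needed inequality. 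This settles every integer $p\ge2$, which is all the statement requires, so no interpolation is needed.

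\textbf{Conclusion.} Positivity of $\mathsf T_{\mathcal Q}^W$ together with \eqref{eq:k22-schatten-local} at each integer $q\ge2$ lets Theorem~\ref{thm:cyclic-schatten-criterion}(i) apply. The roots of $\mathcal Q$ are non-adjacent, so no parallel edges arise even for $q=2$. Hence $C_q(\mathcal Q)$ is strongly dominating for every integer $q\ge2$.

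The only delicate points are bookkeeping. First, the factorization $\mathsf T_S^W=L^*R$ must be legitimate: $L$ and $R$ are Hilbert--Schmidt because $W$ is bounded. Second, the empty-side case must correctly give norm $1$.
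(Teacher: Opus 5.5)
Your proof is correct and follows essentially the same route as the paper. Both factor $\mathsf T_S^W$ through half-operators, bound it by Schatten H\"older, and control the one-edge half $K=A^2$ against $K\circ K$ by the cycle Cauchy--Schwarz inequality $\operatorname{tr}(K^p)^2\le\operatorname{tr}((K\circ K)^p)$. Your $L^*R$ with kernels $\alpha_x\otimes\beta_x$ is just the adjoint form of the paper's $X_{\sigma_\ell}X_{\sigma_r}^*$ with kernels $A_\sigma(x;u,v)$.
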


\begin{proof}
Let $A=A_W$, and let
\[
        K(x,y)=\int_0^1W(x,z)W(y,z)\,dz
\]
be the kernel of the positive semidefinite operator $A^2$.  The full transfer kernel is
\[
        L(x,y)=K(x,y)^2,
\]
so $\mathsf L:=\mathsf T_{\mathcal Q}^W=K\circ K$ is positive semidefinite by the Schur product theorem.  Put
\[
        P=\|\mathsf L\|_{S_p}^{1/4}.
\]
For $\sigma\subseteq\{a,b\}$, let $X_\sigma:L^2([0,1]^2)\to L^2([0,1])$ have kernel
\[
        A_\sigma(x;u,v)=\prod_{c\in\sigma}W(x,\xi_c),
        \qquad \xi_a=u,\quad \xi_b=v.
\]
We claim that
\begin{equation}\label{eq:k22-half-bound}
        \|X_\sigma\|_{S_{2p}}\le P^{|\sigma|}.
\end{equation}
The cases $|\sigma|=0$ and $|\sigma|=2$ are immediate: $X_\varnothing$ has rank one and norm $1$, while
\[
        X_{\{a,b\}}X_{\{a,b\}}^*=\mathsf L,
        \qquad
        \|X_{\{a,b\}}\|_{S_{2p}}^2=\|\mathsf L\|_{S_p}=P^4.
\]
If $|\sigma|=1$, then $X_\sigma X_\sigma^*=K$.  Since $K$ is positive semidefinite, the cycle form of Cauchy--Schwarz gives
\begin{align*}
        \|K\|_{S_p}^{2p}
        &=\tr(K^p)^2\\
        &=\left(\int\prod_{i=1}^pK(x_i,x_{i+1})\,d\mathbf x\right)^2\\
        &\le\int\prod_{i=1}^pK(x_i,x_{i+1})^2\,d\mathbf x\\
        &=\tr((K\circ K)^p)
         =\|\mathsf L\|_{S_p}^p,
\end{align*}
where $x_{p+1}=x_1$.  Hence
\[
        \|X_\sigma\|_{S_{2p}}^2
        =\|K\|_{S_p}
        \le\|\mathsf L\|_{S_p}^{1/2}=P^2,
\]
which proves \eqref{eq:k22-half-bound}.

Every subgraph $S\subseteq\mathcal Q$ is specified by the sets $\sigma_\ell,\sigma_r\subseteq\{a,b\}$ of retained edges at its two roots, and
\[
        \mathsf T_S^W=X_{\sigma_\ell}X_{\sigma_r}^*,
        \qquad
        e(S)=|\sigma_\ell|+|\sigma_r|.
\]
Schatten H\"older and \eqref{eq:k22-half-bound} now give
\[
        \|\mathsf T_S^W\|_{S_p}
        \le
        \|X_{\sigma_\ell}\|_{S_{2p}}
        \|X_{\sigma_r}\|_{S_{2p}}
        \le P^{e(S)},
\]
which is \eqref{eq:k22-schatten-local}.  The final assertion follows from Theorem~\ref{thm:cyclic-schatten-criterion}(i).
\end{proof}

For $n\ge2$, let $R_n$ be the bipartite graph with junction vertices $x_i$ for $i\in\mathbb Z/n\mathbb Z$ and two further vertices $y_i^0,y_i^1$ adjacent precisely to $x_{i-1}$ and $x_i$ for each $i$.  Thus $R_2\cong K_{2,4}$, while for $n\ge3$ the graph $R_n$ is the $K_{2,2}$-replacement of the cycle $C_n$.  By construction,
\[
        C_q(\mathcal Q)\cong R_q.
\]

\begin{proposition}[Reflection identification of the doubled cycles]\label{prop:Rn-reflection}
For every integer $n\ge2$, the graph $R_n$ is seminorming and, in the present non-degenerate terminology, norming.
\end{proposition}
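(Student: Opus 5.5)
The approach is to recognise $R_n$ as a reflection graph in the sense of Conlon and Lee \cite{ConlonLeeReflection}, which gives seminorming; norming then follows from a separate non-degeneracy argument, as promised in Remark~\ref{rem:graph-norm-terminology}.

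\textbf{Seminorming.} The natural structure is the cartesian-type product of the cycle $C_n$ (or the double edge when $n=2$) with the $2$-point fibre $\{y_i^0,y_i^1\}$. Concretely, $R_n$ is obtained from the cycle on the junctions $x_i$ with the edge $x_{i-1}x_i$ replaced by a $K_{2,2}$-type gadget. Equivalently, $R_n$ is the ``$K_{2,2}$-replacement'' of $C_n$, and $K_{2,2}=C_4$.

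The plan is to exhibit a group of automorphisms acting on $R_n$ in the form required by the reflection-graph theorem. The involutions $y_i^0\leftrightarrow y_i^1$ act independently for each $i$. The dihedral reflections of the cycle act on the indices $i$. Each involution fixes a vertex cut and swaps the two sides, which is the cut-involution structure of Conlon--Lee. The key step is to check their cut-involution / cartesian-product hypothesis, namely that $R_n$ arises as a reflection graph of a suitable base ($C_n$ with the edge-doubling). If that identification is awkward, a direct fallback is available: repeat the decorated H\"older inequality \eqref{eq:decorated-holder} via Cauchy--Schwarz across each swap involution. Each involution fixes the junction set and swaps $y_i^0,y_i^1$. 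Repeated Cauchy--Schwarz across these cuts symmetrises any decoration within each $K_{2,2}$ gadget. Then Schatten H\"older on the cycle, as in Theorem~\ref{thm:birooted-k22-certificate} with transfer $X_\sigma X_{\sigma'}^*$ for decorated kernels, yields \eqref{eq:decorated-holder}. For $n=2$ one may instead note $R_2\cong K_{2,4}$, which is the known norming complete bipartite case.

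\textbf{Norming.} Seminorming gives a seminorm, so it remains to show that $t_{R_n}(W)=0$ forces $W=0$ a.e. Here the plan is to invoke the identity from Theorem~\ref{thm:birooted-k22-certificate}:
\[
t_{R_n}(W)=\tr(\mathsf L^n)=\|\mathsf L\|_{S_n}^n,
\]
where $\mathsf L$ is positive semidefinite. This is computed for the bipartite, possibly non-symmetric kernel via $K=WW^*$. If the trace vanishes, then $\mathsf L=0$, so its diagonal $\int K(x,x)^2\,dx$ vanishes. This forces $K(x,x)=\int W(x,z)^2\,dz=0$ a.e., hence $W=0$.

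\textbf{Main obstacle.} The obstacle I expect is the first step: matching $R_n$ precisely to the reflection-graph hypotheses, and treating the degenerate case $n=2$ as well as the non-symmetric bipartite kernels. The norming step should be routine.
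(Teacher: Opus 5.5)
\textbf{Gap: the seminorming step is never established.} Your primary route only announces that $R_n$ should be a Conlon--Lee reflection graph. You never produce the group or the parabolic subgroups.

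The per-gadget swaps you nominate are the wrong automorphisms. A single swap $y_i^0\leftrightarrow y_i^1$ fixes every other vertex, so its fixed set contains the edges of all other gadgets. Conditioning on $x_{i-1},x_i$ leaves $\int R\,g_0g_1$, where $g_\varepsilon(x_{i-1},x_i)=\int W_e(x_{i-1},y)W_{e'}(x_i,y)\,dy$ is the two-path through $y_i^\varepsilon$. Here $R$ is the \emph{signed} transfer of the rest of the cycle. Cauchy--Schwarz therefore only gives $\int|R|\,g_\varepsilon^2$, which is not a decorated density of $R_n$. The same defect breaks the first step of your fallback.

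The paper supplies the missing identification. Take $\Gamma=I_2(n)\times A_1$ with $S=\{s,t,u\}$, $S_1=\{s,u\}$, $S_2=\{t\}$, $\rho=st$, and cosets $X_i=\rho^i\Gamma_1$, $Y_i^\varepsilon=\rho^iu^\varepsilon\Gamma_2$. Since $\rho^iu^\varepsilon t\in\rho^{i-1}\Gamma_1$, each $Y_i^\varepsilon$ is adjacent exactly to $X_i$ and $X_{i-1}$. Then $S_1\cap S_2=\varnothing$ lets \cite[Theorem~1.3]{ConlonLeeReflection} apply. Note that $u$ swaps \emph{all} $y_i^0\leftrightarrow y_i^1$ simultaneously and fixes only the independent set $\{x_i\}$. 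That global swap is the usable involution, not the per-gadget ones.

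Your fallback can still be turned into a complete, reflection-free proof if you reverse the order: decouple the gadgets first, then symmetrise. For a decoration, $t_{R_n}((W_e))=\tr(T_1\cdots T_n)$ with $T_i=X_iZ_i^{*}$. Here $X_i$ has kernel $W_e(x,y)W_f(x,y')$ for the two edges $e,f$ at $x_{i-1}$, and $Z_i$ is built the same way from the two edges at $x_i$. Schatten H\"older gives $|t_{R_n}((W_e))|\le\prod_i\|X_i\|_{S_{2n}}\|Z_i\|_{S_{2n}}$, and $\|X_i\|_{S_{2n}}^2=\|K_e\circ K_f\|_{S_n}$ with $K_e=W_eW_e^{*}$. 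Since $K_e\circ K_f$ is positive semidefinite, the cycle Cauchy--Schwarz used in the proof of Theorem~\ref{thm:birooted-k22-certificate} gives $\|K_e\circ K_f\|_{S_n}^2\le\|K_e\circ K_e\|_{S_n}\|K_f\circ K_f\|_{S_n}$. Combined with $\|K_e\circ K_e\|_{S_n}=p_{R_n}(W_e)^4$, this is \eqref{eq:decorated-holder}, so Hatami's criterion gives seminorming.

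Your norming step is correct and takes a different route from the paper. The paper cites Lee--Sidorenko: a connected seminorming graph that is not a star is norming. Your direct definiteness argument is more elementary. Phrase it as $\mathsf L=0\Rightarrow K\circ K=0$ a.e.\ $\Rightarrow WW^{*}=0\Rightarrow W=0$, or via $\tr\mathsf L=\int(\int W(x,y)^2\,dy)^2\,dx$. Evaluating an a.e.-defined kernel on the diagonal is not meaningful.
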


\begin{proof}
We recall the relevant definition.  Let $\Gamma$ be a finite reflection group with a fixed set $S$ of simple reflections, and let $S_1,S_2\subseteq S$.  Write $\Gamma_i=\langle S_i\rangle$.  The $(S_1,S_2;S,\Gamma)$-reflection graph is the bipartite graph with parts the left cosets $\Gamma/\Gamma_1$ and $\Gamma/\Gamma_2$, where, for each $g\in\Gamma$, the two cosets $g\Gamma_1$ and $g\Gamma_2$ are adjacent.  Repeated incidences give only one edge.

Now let
\[
        I_2(n)=\langle s,t:s^2=t^2=(st)^n=1\rangle
\]
be the dihedral reflection group, let $A_1=\langle u:u^2=1\rangle$, and take
\[
        \Gamma=I_2(n)\times A_1
\]
with simple reflections $S=\{s,t,u\}$.  Put
\[
        S_1=\{s,u\},
        \qquad
        S_2=\{t\},
        \qquad
        \Gamma_1=\langle s,u\rangle,
        \qquad
        \Gamma_2=\langle t\rangle.
\]
Since $|\Gamma|=4n$, $|\Gamma_1|=4$, and $|\Gamma_2|=2$, the two parts contain $n$ and $2n$ vertices, respectively.  Writing $\rho=st$, index the cosets by
\[
        X_i=\rho^i\Gamma_1,
        \qquad
        Y_i^\varepsilon=\rho^iu^\varepsilon\Gamma_2,
        \qquad i\in\mathbb Z/n\mathbb Z,
        \quad \varepsilon\in\{0,1\}.
\]
The coset $Y_i^\varepsilon$ consists of the two elements
\[
        \rho^iu^\varepsilon
        \quad\text{and}\quad
        \rho^iu^\varepsilon t.
\]
The first lies in $X_i$.  Since $t=\rho^{-1}s$ and $s,u\in\Gamma_1$, the second lies in
\[
        \rho^{i-1}\Gamma_1=X_{i-1}.
\]
These are the only two incidences of $Y_i^\varepsilon$, so it is adjacent precisely to $X_i$ and $X_{i-1}$.  Hence the reflection graph is exactly $R_n$.  Finally, $S_1\cap S_2=\varnothing$, so Conlon and Lee's reflection-graph theorem gives the seminorming property in our terminology \cite[Theorem~1.3]{ConlonLeeReflection}.  Lee and Sidorenko proved that a graph with no isolated vertices can be seminorming without being norming only when it is a disjoint union of isomorphic even stars (including copies of $K_2$) \cite[Theorem~4.1]{LeeSidorenkoComplex}.  The graph $R_n$ is connected and is not a star, so its seminorm is non-degenerate and $R_n$ is norming.
\end{proof}

\section{Complete-bipartite cyclic sources}

For $m\ge2$, let $\mathcal B_m$ be a copy of $K_{2,m}$ whose two roots $\ell,r$ are distinct vertices in the part of size $m$.  Write $a,b$ for the two vertices in the other part.  The case $m=2$ is the norming source $\mathcal Q$ above.  We now determine the source condition for the entire family $(\mathcal B_m)_{m\ge2}$.

The even case is organized around a fixed split of the $2t-2$ private vertices into two groups of size $t-1$.  A subgraph is encoded by its retained root edges and by the four possible states of each private vertex.  This produces two half-operators whose product is the full transfer.  We first isolate the analytic tools, then collect the notation and the state decomposition before proving the certificate.

\subsection{Analytic tools}

We first separate the two notions of positivity used in that argument.

\begin{definition}[Gram-kernel and operator order]\label{def:kernel-operator-order}
A symmetric measurable kernel $Q$ on a probability space is \emph{Gram-positive} if there are a Hilbert space $\mathcal H$ and a measurable map $\Phi$ such that
\[
        Q(u,v)=\langle\Phi(u),\Phi(v)\rangle_{\mathcal H}
\]
for almost every $(u,v)$.  We write $Q\preceqG R$ when $R-Q$ is Gram-positive.  A bounded Gram-positive kernel induces a positive semidefinite integral operator.  For bounded self-adjoint operators we write $\mathsf A\preceqop\mathsf B$ for the usual quadratic-form order.  The implication from Gram-kernel order to operator order will be used below; the converse is neither asserted nor needed.
\end{definition}

We use the following standard product-space form of Finner's inequality.

\begin{lemma}[Uniform-coordinate Finner inequality]\label{lem:finner-coordinate}
Let $(\Omega_i,\mu_i)_{i\in I}$ be probability spaces.  For each $j\in J$, let $g_j\ge0$ be a measurable function of the coordinates indexed by a set $I_j\subseteq I$.  If every coordinate belongs to at most $r$ of the sets $I_j$, counted with multiplicity, then
\begin{equation}\label{eq:finner-coordinate}
        \int_{\prod_{i\in I}\Omega_i}\prod_{j\in J}g_j(x_{I_j})\,d\mu(x)
        \le
        \prod_{j\in J}\|g_j\|_{L^r}.
\end{equation}
\end{lemma}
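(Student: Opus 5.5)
We prove the uniform-coordinate Finner inequality \eqref{eq:finner-coordinate} by induction on the number of coordinates $|I|$, integrating out one coordinate at a time with the generalized H\"older inequality. Before the induction we reduce to the case where every coordinate belongs to exactly $r$ of the sets $I_j$. If a coordinate $i$ lies in only $k<r$ of them, append $r-k$ dummy factors $g\equiv1$ with index set $\{i\}$. These leave the integrand unchanged and have $L^r$ norm $1$ on a probability space. We may also assume each $g_j$ is bounded or simple and pass to the general case by monotone convergence, and we may assume all norms on the right are finite, since otherwise there is nothing to prove.

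The base case $I=\varnothing$ is trivial: each $g_j$ is a constant, and the integral equals the product of these constants, which equals the product of their norms. For the inductive step, fix $i_0\in I$ and let $J_0=\{j:i_0\in I_j\}$, so $|J_0|=r$ counted with multiplicity. Condition on all other coordinates $x'$. The factors with $j\notin J_0$ do not depend on $x_{i_0}$. Applying H\"older with $r$ exponents each equal to $r$ in the variable $x_{i_0}$ gives
\[
        \int\prod_{j\in J_0}g_j\,d\mu_{i_0}(x_{i_0})\le\prod_{j\in J_0}h_j(x'_{I_j\setminus\{i_0\}}),
        \qquad
        h_j=\Bigl(\int g_j^r\,d\mu_{i_0}\Bigr)^{1/r}.
\]
Each $h_j$ depends only on the coordinates $I_j\setminus\{i_0\}$. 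The new family $\{g_j\}_{j\notin J_0}\cup\{h_j\}_{j\in J_0}$ lives on $I\setminus\{i_0\}$, and every remaining coordinate still lies in at most $r$ of its index sets. By Fubini and the induction hypothesis, the integral is at most $\prod_{j\notin J_0}\|g_j\|_{L^r}\prod_{j\in J_0}\|h_j\|_{L^r}$. Finally, Fubini gives $\|h_j\|_{L^r}^r=\int g_j^r=\|g_j\|_{L^r}^r$, which closes the induction.

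The only point needing care is multiplicity: if the same set is used twice, or $I_j=\{i_0\}$, the corresponding $h_j$ becomes a constant and the argument is unaffected. The measurability of the $h_j$ follows from Tonelli. We do not expect a genuine obstacle. The real content is the observation that integrating out a single coordinate preserves the "at most $r$" cover condition, and this is exactly why uniform exponents $r$ suffice in place of a fractional cover.
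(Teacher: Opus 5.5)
Your proof is correct, but it takes a different route from the paper: the paper does not prove the lemma. It simply cites Finner's generalized H\"older inequality in its fractional-cover form, putting weight $1/r$ on every factor, so that the condition $\sum_{j:\,i\in I_j}1/r\le1$ at each coordinate is exactly the hypothesis, and it lists repeated factors as distinct members of $J$. You instead prove the special case directly, by the standard iterated-H\"older induction: apply H\"older in $x_{i_0}$, replace $g_j$ by $h_j=\|g_j\|_{L^r(d\mu_{i_0})}$, and use Tonelli to get $\|h_j\|_{L^r}=\|g_j\|_{L^r}$. Your version is self-contained and makes the multiplicity and measurability bookkeeping explicit; the citation is shorter and places the lemma inside the general weighted statement.

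Two small remarks. First, padding each coordinate up to exactly $r$ uses that $r$ is an integer, which holds in the application ($r=2t$). The padding is unnecessary anyway, because on a probability space H\"older with $|J_0|\le r$ factors, all of exponent $r$, already holds. Second, your closing comment slightly overstates the role of uniformity. The same induction, with exponent $p_j$ attached to $g_j$ and the condition $\sum_{j:\,i\in I_j}1/p_j\le1$, proves the full weighted Finner inequality, since integrating out one coordinate preserves that condition as well.
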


\begin{proof}
This is the fractional-matching form of Finner's generalized H\"older inequality, with weight $1/r$ on every factor \cite{Finner}.  Repeated copies of a factor are treated as distinct members of $J$.
\end{proof}

The proof of the even case uses four elementary operator facts.  We state them separately because each marks a distinct interface in the argument.

\begin{lemma}[Schur powers preserve Gram-kernel order]\label{lem:schur-power-order}
Let $D$ and $K$ be Gram-positive kernels with $D\preceqG K$.  Then, for every integer $r\ge1$,
\[
        D^{\circ r}\preceqG K^{\circ r}.
\]
More generally, if $Q\preceqG R$ and $P$ is Gram-positive, then
\[
        P\circ Q\preceqG P\circ R.
\]
Here $\circ$ denotes pointwise, or Schur, product.  No pointwise sign assumption on the kernels is required.
\end{lemma}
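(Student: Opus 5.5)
The plan is to work entirely with the Gram representations and reduce both assertions to the Schur product theorem for Gram-positive kernels, realized through tensor products of feature maps. First, fix feature maps. Since $P$ is Gram-positive, write $P(u,v)=\langle\Psi(u),\Psi(v)\rangle_{\mathcal H_1}$. Since $R-Q$ is Gram-positive, write $R(u,v)-Q(u,v)=\langle\Theta(u),\Theta(v)\rangle_{\mathcal H_2}$.

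The key identity is then
\[
        P\circ R-P\circ Q=P\circ(R-Q),
\]
whose value at $(u,v)$ equals
\[
        \langle\Psi(u),\Psi(v)\rangle\langle\Theta(u),\Theta(v)\rangle
        =\langle\Psi(u)\otimes\Theta(u),\Psi(v)\otimes\Theta(v)\rangle_{\mathcal H_1\otimes\mathcal H_2}.
\]
So $u\mapsto\Psi(u)\otimes\Theta(u)$ is a Gram representation of $P\circ R-P\circ Q$, which gives $P\circ Q\preceqG P\circ R$. The pointwise product of two functions defined almost everywhere is again defined almost everywhere, so the almost-everywhere qualifier carries over. No sign condition on the kernels enters anywhere, which is consistent with the remark in the statement.

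The only technical point is measurability of $\Psi\otimes\Theta$. I expect this to be the mild obstacle. I would handle it by first noting that, after discarding a null set, we may assume the closed span of the range is separable. For a possibly non-separable Hilbert space, one can restrict to the closed span of an essential range; otherwise one works with weak measurability. With separability in hand, weak measurability of the tensor map follows from measurability of the coordinate functions $\langle\Psi(u),e_i\rangle\langle\Theta(u),f_j\rangle$ in product orthonormal bases. If needed, one can simply adopt weak measurability as the sense of "measurable" in Definition~\ref{def:kernel-operator-order}.

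For the Schur-power statement, I would argue by induction on $r$ using the telescoping decomposition
\[
        K^{\circ r}-D^{\circ r}=\sum_{j=0}^{r-1}K^{\circ j}\circ(K-D)\circ D^{\circ(r-1-j)}.
\]
Each $K^{\circ j}$ and $D^{\circ(r-1-j)}$ is Gram-positive: the tensor-power maps $\Phi^{\otimes j}$ represent them, and the empty power is the constant kernel $1$, represented by a constant unit vector. Each summand is therefore a triple Schur product of Gram-positive kernels, hence Gram-positive by the tensor identity above. A finite sum of Gram-positive kernels is Gram-positive via the direct-sum feature map $\bigoplus_j\Phi_j$.

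Equivalently, the induction can be run through the general statement twice:
\[
        D^{\circ r}=D\circ D^{\circ(r-1)}\preceqG D\circ K^{\circ(r-1)}\preceqG K\circ K^{\circ(r-1)}.
\]
The first step takes $P=D$ and applies the induction hypothesis. The second takes $P=K^{\circ(r-1)}$, which is Gram-positive, and uses $D\preceqG K$ together with commutativity of $\circ$. Transitivity of $\preceqG$ follows again from direct sums, since
\[
        (R-Q)+(S-R)=S-Q.
\]
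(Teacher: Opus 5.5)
Your proposal is correct and follows essentially the same route as the paper. You obtain the second assertion from $P\circ R-P\circ Q=P\circ(R-Q)$ and the Schur product theorem, which you realize explicitly through the tensor feature map $\Psi\otimes\Theta$; the paper simply cites the theorem. You obtain the first from the telescoping sum, where your ordering $K^{\circ j}\circ(K-D)\circ D^{\circ(r-1-j)}$ is the mirror image of the paper's $D^{\circ j}\circ(K-D)\circ K^{\circ(r-1-j)}$, and your measurability and direct-sum remarks only make explicit what the paper leaves implicit.
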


\begin{proof}
The Schur product theorem says that the pointwise product of two Gram-positive kernels is again Gram-positive.  Hence
\[
        P\circ R-P\circ Q=P\circ(R-Q)
\]
is Gram-positive, proving the second assertion.  For the first, use the telescoping identity
\[
 K^{\circ r}-D^{\circ r}
 =\sum_{j=0}^{r-1}
   D^{\circ j}\circ(K-D)\circ K^{\circ(r-1-j)}.
\]
Each summand is Gram-positive by repeated use of the Schur product theorem, so the difference is Gram-positive.
\end{proof}

\begin{lemma}[Truncated Schatten interpolation]\label{lem:truncated-schatten-interpolation}
Let $(\Omega,\mu)$ and $(Y,\nu)$ be finite measure spaces, let $G\in L^\infty(Y\times\Omega)$, and let $K\in L^\infty(\Omega)$ be real valued.  Fix an integer $t\ge2$ and an exponent $2\le r<\infty$.  For $1\le s\le t$, let $X_s:L^2(\Omega)\to L^2(Y)$ be the integral operator with kernel
\[
        G(y,\omega)K(\omega)^{s-1}.
\]
Then
\begin{equation}\label{eq:abstract-schatten-interpolation}
        \|X_s\|_{S_r}
        \le
        \|X_1\|_{S_r}^{(t-s)/(t-1)}
        \|X_t\|_{S_r}^{(s-1)/(t-1)}.
\end{equation}
\end{lemma}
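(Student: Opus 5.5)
The plan is to view $s$ as the real part of a complex parameter and apply Hadamard's three-lines theorem to the Schatten norm. First I reduce to $K\ge0$. Let $\epsilon=\operatorname{sgn}K$, with the convention $\epsilon=1$ on $\{K=0\}$. Then multiplication $U=M_\epsilon$ is a unitary on $L^2(\Omega)$, and with $0^0=1$,
\[
        K(\omega)^{s-1}=|K(\omega)|^{s-1}\epsilon(\omega)^{s-1}
\]
holds for every integer $s\ge1$. Hence $X_s=Y_sU^{s-1}$, where $Y_s$ has kernel $G(y,\omega)|K(\omega)|^{s-1}$. Schatten norms are unitarily invariant, so $\|X_s\|_{S_r}=\|Y_s\|_{S_r}$ for all $s$, and it suffices to treat $K\ge0$.

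Next I remove the zero set of $K$. Let $P$ be multiplication by $\one_{\{K>0\}}$. For $s\ge2$ we have $Y_s=Y_sP$. For $s=1$ we have $\|Y_1P\|_{S_r}\le\|Y_1\|_{S_r}$, since $P$ is a contraction. The right-hand side of \eqref{eq:abstract-schatten-interpolation} is increasing in $\|X_1\|_{S_r}$, and the inequality is trivial for $s=1$. So it suffices to prove the estimate with $X_1$ replaced by $Z:=Y_1P$, with everything now living on $\{K>0\}$. One may also assume $\|Y_t\|_{S_r}<\infty$ and, by a standard perturbation, that both endpoint norms are nonzero.

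On the closed strip $1\le\operatorname{Re}z\le t$, define
\[
        F(z)=Z\,M_{K^{z-1}},
\]
with $K^{z-1}=e^{(z-1)\log K}$ on $\{K>0\}$. Since $K$ is bounded, $|K^{z-1}|=K^{\operatorname{Re}z-1}$ is bounded on the strip, so $F$ is bounded there. On the boundary line $\operatorname{Re}z=1$ we get $F(1+iy)=Z\,M_{K^{iy}}$, and $M_{K^{iy}}$ is a unitary on $L^2(\{K>0\})$, so $\|F(1+iy)\|_{S_r}=\|Z\|_{S_r}$. On the line $\operatorname{Re}z=t$ we get $F(t+iy)=Y_t M_{K^{iy}}$, so $\|F(t+iy)\|_{S_r}=\|Y_t\|_{S_r}$. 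At the real point $z=s$, $F(s)=Y_s$.

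The final step is the Schatten three-lines principle, which I expect to be the only delicate point. By duality,
\[
        \|F(s)\|_{S_r}=\sup\,|\operatorname{tr}(F(s)B)|,
\]
where the supremum runs over finite-rank $B$ with $\|B\|_{S_{r'}}\le1$. To make the functional analytic in $z$, I replace $B$ by a family $B_z=V|B|^{r'\alpha(z)}W$, built from the polar decomposition of $B$, where $\alpha$ is affine and chosen so that $\|B_z\|_{S_{r'}}\le1$ on both boundary lines and $B_s=B$. Then $\phi(z)=\operatorname{tr}(F(z)B_z)$ is bounded and analytic on the strip; both facts are clear because $B$ has finite rank and $z\mapsto K^{z-1}$ is bounded and analytic into $L^\infty$. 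By Hölder, $|\phi|\le\|Z\|_{S_r}$ on $\operatorname{Re}z=1$ and $|\phi|\le\|Y_t\|_{S_r}$ on $\operatorname{Re}z=t$. The three-lines theorem at $\operatorname{Re}z=s$, which has convex weight $(s-1)/(t-1)$ on the right line, then gives
\[
        |\operatorname{tr}(Y_sB)|\le\|Z\|_{S_r}^{(t-s)/(t-1)}\|Y_t\|_{S_r}^{(s-1)/(t-1)}.
\]
Taking the supremum over $B$ and undoing the two reductions yields \eqref{eq:abstract-schatten-interpolation}.

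If a more elementary route is preferred, there is an alternative. Write
\[
        X_sX_s^*=Z\,M_{K^{2s-2}}Z^*,
\]
and invoke log-convexity of $u\mapsto\|Z M_{K^{u}}\|_{S_r}$, which is exactly what the three-lines argument provides. I would still present it via the three-lines argument above.
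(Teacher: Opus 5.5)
Your argument is correct. It uses the same three-lines-plus-duality scheme as the paper but handles the zero set of $K$ differently.

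\textbf{The two routes.} The paper works on $0\le\Re z\le1$ with kernel $G\,1_{E_n}|K|^{(t-1)z}$, where $E_n=\{n^{-1}\le|K|\le n\}$.
\begin{itemize}
\item Because $\log|K|$ is bounded on $E_n$, this is an $S_2$-valued family that is analytic and continuous on the closed strip.
\item The signs are absorbed into unitaries on each boundary line.
\item The truncation is removed at the end via $X_sP_n\to X_s$ in $S_2\hookrightarrow S_r$.
\end{itemize}
You instead remove the sign and the zero set first and use the strip $1\le\Re z\le t$. There $|K^{z-1}|=K^{\Re z-1}$ stays bounded, so you can test against finite-rank $B$ and need no final limit.

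\textbf{One justification to correct.} Your stated reason for the regularity of $\phi$ fails at the left edge. The map $z\mapsto K^{z-1}$ is analytic into $L^\infty$ on the open strip. It is not continuous into $L^\infty$ up to $\Re z=1$ when $K$ is not bounded away from $0$ on $\{K>0\}$; for example, $\|K^{\varepsilon}-1\|_{L^\infty(\{K>0\})}$ does not tend to $0$ as $\varepsilon\downarrow0$.

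The three-lines theorem needs $\phi$ continuous on the closed strip. This does hold, but by dominated convergence. For fixed finite-rank $B=\sum_j\langle\cdot,e_j\rangle f_j$,
\[
\phi(z)=\sum_j\int_{\{K>0\}}K^{z-1}f_j\,\overline{Z^*e_j}\,d\mu ,
\]
with integrands in $L^1$ and $|K^{z-1}|\le\max(1,\|K\|_\infty^{t-1})$.

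This is exactly the issue the paper's lower truncation is built to avoid. Your route trades the paper's final limiting argument for this dominated-convergence step.

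\textbf{Unnecessary steps.} The $z$-dependent family $B_z$ is not needed. Both boundary lines carry the same exponent $r$, so a fixed $B$ already gives $|\operatorname{tr}(F(z)B)|\le\|F(z)\|_{S_r}\|B\|_{S_{r'}}$ there; this is what the paper does. In fact, the constraints $\alpha(s)=1/r'$ and $\|B_z\|_{S_{r'}}\le1$ on both lines force $\alpha\equiv1/r'$ in substance, so your construction collapses to $B_z\equiv B$. Likewise, your closing ``alternative'' via $X_sX_s^*$ is the same log-convexity statement, not an independent route.
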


\begin{proof}
The endpoint cases are immediate, so assume $1<s<t$ and put
\[
        \theta=\frac{s-1}{t-1}.
\]
For $n\ge1$, let
\[
        E_n=\{\omega\in\Omega:n^{-1}\le |K(\omega)|\le n\}
\]
and let $P_n$ be multiplication by $1_{E_n}$ on $L^2(\Omega)$.  On the closed strip $0\le\Re z\le1$, define $X_n(z)$ by the kernel
\[
        G(y,\omega)1_{E_n}(\omega)|K(\omega)|^{(t-1)z}.
\]
Because $G$ and $K$ are bounded and the measure spaces are finite, every $X_s$ is Hilbert--Schmidt.  On $E_n$ the function $|\log|K||$ is bounded, so $z\mapsto X_n(z)$ is analytic as an $S_2$-valued family in the open strip and continuous on its closure.

For real $\eta$, let $U_{0,n,\eta}$ be multiplication by $|K|^{i\eta(t-1)}$ on $E_n$ and by $1$ outside $E_n$.  This is unitary, and
\[
        X_n(i\eta)=X_1P_nU_{0,n,\eta}.
\]
Right multiplication by a unitary preserves singular values, while $P_n$ is a contraction; hence
\[
        \|X_n(i\eta)\|_{S_r}\le \|X_1\|_{S_r}.
\]
Likewise, let $U_{1,n,\eta}$ be multiplication by
\[
        \operatorname{sgn}(K)^{t-1}|K|^{i\eta(t-1)}
\]
on $E_n$ and by $1$ outside $E_n$.  Then
\[
        X_n(1+i\eta)=X_tP_nU_{1,n,\eta},
        \qquad
        \|X_n(1+i\eta)\|_{S_r}\le \|X_t\|_{S_r}.
\]

We now spell out the interpolation step.  Let $r'=r/(r-1)$ and take
$A\in S_{r'}(L^2(Y),L^2(\Omega))$.  Since $r'\le2$, the operator $A$ is Hilbert--Schmidt, and the scalar function
\[
        f_A(z)=\operatorname{tr}(A X_n(z))
\]
is analytic in the open strip and continuous on its closure.  Schatten H\"older and the two boundary estimates give
\[
\begin{aligned}
        |f_A(i\eta)|
        &\le \|A\|_{S_{r'}}\|X_1\|_{S_r},\\
        |f_A(1+i\eta)|
        &\le \|A\|_{S_{r'}}\|X_t\|_{S_r}.
\end{aligned}
\]
The scalar three-lines theorem therefore yields
\[
        |f_A(\theta)|
        \le
        \|A\|_{S_{r'}}
        \|X_1\|_{S_r}^{1-\theta}
        \|X_t\|_{S_r}^{\theta}.
\]
Taking the supremum over the unit ball of $S_{r'}$, using
$(S_r)^*=S_{r'}$ under the trace pairing, gives
\begin{equation}\label{eq:truncated-interior-bound}
        \|X_n(\theta)\|_{S_r}
        \le
        \|X_1\|_{S_r}^{1-\theta}
        \|X_t\|_{S_r}^{\theta}.
\end{equation}
Thus the proof uses only the scalar three-lines theorem and Schatten duality, rather than an unstated vector-valued interpolation principle.

If $U_{s,n}$ is multiplication by $\operatorname{sgn}(K)^{s-1}$ on $E_n$ and by $1$ outside $E_n$, then $U_{s,n}$ is unitary and
\[
        X_sP_n=X_n(\theta)U_{s,n}.
\]
Again, right multiplication by a unitary preserves singular values, so the bound in \eqref{eq:truncated-interior-bound} also holds for $X_sP_n$.

Finally, $P_n\uparrow1_{\{K\ne0\}}$ pointwise.  Since $s>1$, the kernel of $X_s$ vanishes on $\{K=0\}$, and dominated convergence gives $X_sP_n\to X_s$ in $S_2$.  For $r\ge2$, the singular-value inequality $\|T\|_{S_r}\le\|T\|_{S_2}$ gives the continuous inclusion $S_2\hookrightarrow S_r$, so the same convergence holds in $S_r$.  Letting $n\to\infty$ proves \eqref{eq:abstract-schatten-interpolation}.
\end{proof}

\begin{lemma}[Pointwise comparison from a quadratic-form order]\label{lem:quadratic-pointwise-comparison}
Let $Q_1\preceqop Q_2$ be positive semidefinite operators on $L^2(\Omega)$, and let $h_{x,y}\in L^2(\Omega)$ be any measurable family of test functions.  Define
\[
        M_i(x,y)=\langle h_{x,y},Q_i h_{x,y}\rangle
        \qquad(i=1,2).
\]
Then, for almost every $(x,y)$,
\[
        0\le M_1(x,y)\le M_2(x,y).
\]
\end{lemma}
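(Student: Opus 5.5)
The plan is to write $Q_2-Q_1$ as a positive semidefinite operator and then evaluate quadratic forms at the test functions.

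First, fix $(x,y)$ with $h_{x,y}\in L^2(\Omega)$. Since $Q_1$ is positive semidefinite, $\langle h,Q_1h\rangle\ge0$ for every $h\in L^2(\Omega)$, so $M_1(x,y)\ge0$. Since $Q_1\preceqop Q_2$, the difference $Q_2-Q_1$ is positive semidefinite. Hence
\[
        M_2(x,y)-M_1(x,y)=\langle h_{x,y},(Q_2-Q_1)h_{x,y}\rangle\ge0 .
\]
Both inequalities hold at every parameter value at which $h_{x,y}$ is a genuine element of $L^2(\Omega)$. Because the operators are self-adjoint, the forms are real, so the comparison makes sense; for a complex Hilbert space, positivity already forces the forms to be real.

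The one point needing care, and the only real obstacle, is measurability and the ``almost every'' qualifier. By hypothesis, $h_{x,y}$ lies in $L^2(\Omega)$ only for almost every $(x,y)$. I will therefore restrict to the full-measure set where $\|h_{x,y}\|_2<\infty$; this set is measurable by Tonelli applied to $|h|^2$.

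Measurability of $M_i$ comes from expanding the form. For bounded operators, $(x,y)\mapsto\langle h_{x,y},Q_ih_{x,y}\rangle$ is measurable for a jointly measurable family. To see this, expand in an orthonormal basis $(e_k)$ of the separable space $L^2(\Omega)$:
\[
        \langle h,Qh\rangle=\sum_{j,k}\overline{\langle e_j,h\rangle}\,\langle e_j,Qe_k\rangle\,\langle e_k,h\rangle .
\]
Each coefficient $\langle e_k,h_{x,y}\rangle$ is measurable by Fubini. The double series is a limit of measurable partial sums, namely the truncations $\langle P_Nh,QP_Nh\rangle$, which converge because $Q$ is bounded.

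The statement is the pointwise inequality itself, so the proof will not go beyond this.
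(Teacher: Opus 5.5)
Your proof is correct and is essentially the paper's argument: positivity of $Q_1$ gives $M_1\ge0$, and evaluating the positive operator $Q_2-Q_1$ at the test vector $h_{x,y}$ gives $M_2-M_1\ge0$. The measurability discussion you add is a harmless supplement that the paper omits, since the lemma only asserts the pointwise inequality.
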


\begin{proof}
Positivity of $Q_1$ gives $M_1(x,y)\ge0$.  Applying the positive operator $Q_2-Q_1$ to the particular test vector $h_{x,y}$ gives
\[
        M_2(x,y)-M_1(x,y)
        =\langle h_{x,y},(Q_2-Q_1)h_{x,y}\rangle\ge0.
\]
The statement is therefore a consequence of the special quadratic-form representation; it is not an assertion that arbitrary operator order implies pointwise order of arbitrary kernels.
\end{proof}

\begin{lemma}[Integer Schatten monotonicity for non-negative kernels]\label{lem:integer-schatten-monotonicity}
Let $M$ and $L$ be positive semidefinite integral operators with bounded pointwise non-negative kernels $m$ and $\ell$.  If
\[
        0\le m(x,y)\le \ell(x,y)
        \quad\text{for almost every }(x,y),
\]
then, for every integer $p\ge2$,
\[
        \|M\|_{S_p}\le\|L\|_{S_p}.
\]
\end{lemma}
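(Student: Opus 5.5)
Since $M$ and $L$ are positive semidefinite, the plan is to compute their Schatten norms at integer exponents directly as cyclic integrals of kernels and compare these integrals termwise. For $p\ge2$ we have $\|M\|_{S_p}^p=\operatorname{tr}(M^p)$, because the eigenvalues are non-negative and so $|M|^p=M^p$. The same identity holds for $L$.

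First I would express these traces through the kernels. $M$ is an integral operator with a bounded kernel on a finite measure space, so it is Hilbert--Schmidt. Applying Lemma~\ref{lem:cyclic-trace-formula} with $A_1=\cdots=A_p=M$ (valid since $p\ge2$) gives
\[
        \operatorname{tr}(M^p)=\int_{\Omega^p}\prod_{i=1}^p m(x_i,x_{i+1})\,d\mathbf x,
        \qquad x_{p+1}=x_1,
\]
and the same formula holds for $L$ with kernel $\ell$.

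Next I would compare the integrands. By hypothesis $0\le m\le\ell$ almost everywhere, so the integrands satisfy
\[
        0\le\prod_i m(x_i,x_{i+1})\le\prod_i\ell(x_i,x_{i+1}).
\]
This pointwise inequality holds almost everywhere on $\Omega^p$, and this needs a short justification. For each $i$, the map $\mathbf x\mapsto(x_i,x_{i+1})$ is a coordinate projection, and the preimage of a null set under such a projection is null in the product measure. Hence the exceptional set for each factor is null, and a finite union of these sets is also null. Integrating then gives $\operatorname{tr}(M^p)\le\operatorname{tr}(L^p)$, and taking $p$th roots yields $\|M\|_{S_p}\le\|L\|_{S_p}$.

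The only delicate point is the first step. Positive semidefiniteness is essential there: without it, the trace of $M^p$ for odd $p$ would not equal $\|M\|_{S_p}^p$. With positive semidefiniteness, the cyclic trace formula applies directly to powers of $M$, and integrality of $p$ is what makes the trace a single cyclic integral. I would also note briefly that nothing more is needed for measurability or finiteness, because the kernels are bounded and the measure space is finite.
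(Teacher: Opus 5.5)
Your proposal is correct and follows the paper's proof essentially step for step. Positive semidefiniteness gives $\|M\|_{S_p}^p=\operatorname{tr}(M^p)$, Lemma~\ref{lem:cyclic-trace-formula} turns both traces into cycle integrals, and the bound $0\le m\le\ell$ is then integrated before taking $p$th roots. Your justification that the almost-everywhere comparison carries over to $\Omega^p$ — preimages of null sets under the coordinate projections $\mathbf x\mapsto(x_i,x_{i+1})$ are null — is a detail the paper leaves implicit.
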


\begin{proof}
The bounded kernels lie in $L^2$, so $M$ and $L$ are Hilbert--Schmidt.  Hence $M^p$ and $L^p$ are trace class for every integer $p\ge2$.  Since the operators are positive semidefinite,
\[
        \|M\|_{S_p}^p=\operatorname{tr}(M^p),
        \qquad
        \|L\|_{S_p}^p=\operatorname{tr}(L^p).
\]
Lemma~\ref{lem:cyclic-trace-formula}, applied with all factors equal, gives the cycle-kernel formula; thus, with $x_{p+1}=x_1$,
\[
\begin{aligned}
        \operatorname{tr}(M^p)
        &=\int_{[0,1]^p}\prod_{i=1}^p m(x_i,x_{i+1})\,d\mathbf x\\
        &\le\int_{[0,1]^p}\prod_{i=1}^p \ell(x_i,x_{i+1})\,d\mathbf x
         =\operatorname{tr}(L^p)
         =\|L\|_{S_p}^p.
\end{aligned}
\]
Taking $p$th roots proves the claim.  The integrality of $p$ and the non-negativity of the two kernels are essential to this argument.
\end{proof}

\subsection{Half-state encoding and the even certificate}

The notation used in the proof is collected in Table~\ref{tab:k2even-notation}.  Lower-case Roman letters denote scalar functions or kernels, while sans-serif letters denote the associated integral operators.  The only matrix objects appearing below are finite-rank step-kernel approximants in Lemma~\ref{lem:cyclic-trace-formula}; they are not identified with the limiting kernels.

\begin{table}[t]
\centering
\small
\renewcommand{\arraystretch}{1.15}
\begin{tabularx}{\textwidth}{@{}l l Y@{}}
\toprule
symbol & type & meaning \\
\midrule
$x,y$ & root variables & output variables of a transfer operator \\
$u,v$ & centre variables & variables assigned to the two vertices $a,b$ in the size-$2$ part \\
$d(u)$ & scalar function & row average $\int W(u,z)\,dz$ \\
$K(u,v)$ & Gram kernel & $\int W(u,z)W(v,z)\,dz$ \\
$D(u,v)$ & rank-one Gram kernel & $d(u)d(v)$, with $D\preceqG K$ \\
$\sigma$ & subset of $\{a,b\}$ & retained edges from one root to the two centres \\
$\tau_j$ & subset of $\{a,b\}$ & retained edges from the $j$th private vertex to the centres \\
$\alpha,\beta,\gamma$ & non-negative integers & numbers of states $\{a\}$, $\{b\}$ and $\{a,b\}$ in one half \\
$h$ & integer & retained edge count $|\sigma|+\alpha+\beta+2\gamma$ in one half \\
$\mathsf X_{\sigma,\boldsymbol\tau}$ & $L^2([0,1]^2)\to L^2([0,1])$ & half-transfer operator associated with one root and $t-1$ private vertices \\
$\mathsf X_s$ & $L^2([0,1]^2)\to L^2([0,1])$ & balanced two-root half-operator with kernel $W(x,u)W(x,v)K(u,v)^{s-1}$ \\
$\mathsf Y_{\alpha,\beta,\gamma}$ & $L^2([0,1]^2)\to L^2([0,1])$ & auxiliary two-root half-operator for an exponent pattern \\
$\mathsf M_{\alpha,\beta,\gamma}$ & positive trace-class operator & $\mathsf Y_{\alpha,\beta,\gamma}\mathsf Y_{\alpha,\beta,\gamma}^{*}$ \\
$\mathsf L_t$ & positive trace-class operator & full transfer $\mathsf T_{\mathcal B_{2t}}^W=\mathsf X_t\mathsf X_t^*$ \\
$P_0,P$ & non-negative scalars & endpoint and full-transfer normalizations in \eqref{eq:k2even-P-P0} \\
\bottomrule
\end{tabularx}
\caption{Notation for the $K_{2,2t}$ certificate.}
\label{tab:k2even-notation}
\end{table}

\begin{figure}[t]
\centering
\begin{tikzpicture}[scale=0.88, every node/.style={font=\small}]
  \begin{scope}[xshift=-4.7cm]
    \node[circle,draw,inner sep=2pt] (x) at (-1.4,0) {$x$};
    \node[circle,draw,inner sep=2pt] (a) at (0,0.8) {$a$};
    \node[circle,draw,inner sep=2pt] (b) at (0,-0.8) {$b$};
    \draw[dashed] (x)--(a);
    \draw[dashed] (x)--(b);
    \node[font=\scriptsize,align=center] at (-0.75,1.3) {root edges\\selected by $\sigma$};

    \node[circle,draw,inner sep=1.8pt] (ze) at (1.55,1.55) {$z_\varnothing$};
    \node[circle,draw,inner sep=1.8pt] (za) at (2.6,0.55) {$z_a$};
    \node[circle,draw,inner sep=1.8pt] (zb) at (2.6,-0.55) {$z_b$};
    \node[circle,draw,inner sep=1.8pt] (zab) at (1.55,-1.55) {$z_{ab}$};
    \draw (a)--(za);
    \draw (b)--(zb);
    \draw (a)--(zab);
    \draw (b)--(zab);
    \node[font=\scriptsize,align=center] at (1.7,0) {private states\\$\tau\in\{\varnothing,\{a\},\{b\},\{a,b\}\}$};
    \node at (0.6,-2.15) {(a) one half-state};
  \end{scope}

  \begin{scope}[xshift=3.3cm]
    \node[align=left] at (0,1.25) {\textbf{Diagonal pattern:}\quad $\alpha=\beta=k$};
    \node[align=left] at (0,0.55) {$d(u)^k d(v)^k K(u,v)^\gamma$};
    \draw[-{Latex[length=2mm]},thick] (-0.1,0.15)--(-0.1,-0.45);
    \node[align=left] at (0,-0.85) {$\mathsf M_{k,k,\gamma}$ is compared with $\mathsf L_{1+k+\gamma}$.};

    \node[align=left] at (0,-1.65) {\textbf{Off-diagonal pattern:}\quad $\alpha\ne\beta$};
    \node[align=left] at (0,-2.35) {$\mathsf M_{\alpha,\beta,\gamma}$ is reduced by cycle Cauchy--Schwarz};
    \node[align=left] at (0,-2.95) {to $\mathsf M_{\alpha,\alpha,\gamma}$ and $\mathsf M_{\beta,\beta,\gamma}$.};
  \end{scope}
\end{tikzpicture}
\caption{The half-state encoding.  Dashed root edges indicate the choice of $\sigma$; the four private-vertex states contribute $1,d(u),d(v)$ and $K(u,v)$, respectively.  For the two-root boundary state, equal one-sided exponents give the diagonal case, while unequal exponents are reduced to two diagonal cases by a cycle Cauchy--Schwarz inequality.}
\label{fig:k2even-half-states}
\end{figure}

\paragraph{Proof map.}
The proof of Theorem~\ref{thm:birooted-k2even-certificate} separates the four possible root-edge states of one half.  States with no root edge or one root edge are controlled directly by Finner's inequality after the coordinate multiplicities are listed.  For the two-root-edge state, the balanced half-operators are first interpolated between their endpoints.  Gram-kernel order and Lemma~\ref{lem:integer-schatten-monotonicity} then control the diagonal exponent patterns, while a cycle Cauchy--Schwarz argument reduces every off-diagonal pattern to two diagonal ones.  Finally, the two half-state estimates are assembled through
\[
        \mathsf T_S^W
        =\mathsf X_{\sigma_\ell,\boldsymbol\tau^L}
         \mathsf X_{\sigma_r,\boldsymbol\tau^R}^{*}
\]
and Schatten H\"older.  The cycle expansion is used only for integer exponents $p\ge2$; no monotonicity statement for arbitrary real Schatten exponents is asserted.

\begin{theorem}[Uniform certificate for the two-rooted $K_{2,2t}$ blocks]\label{thm:birooted-k2even-certificate}
Let $t\ge2$.  For every integer $p\ge2$, every bounded symmetric real kernel $W$, and every non-empty birooted subgraph $S\subseteq\mathcal B_{2t}$,
\begin{equation}\label{eq:k2even-schatten-local}
        \|\mathsf T_S^W\|_{S_p}
        \le
        \|\mathsf T_{\mathcal B_{2t}}^W\|_{S_p}^{e(S)/(4t)}.
\end{equation}
Moreover, $\mathsf T_{\mathcal B_{2t}}^W$ is positive semidefinite for every $W$.
\end{theorem}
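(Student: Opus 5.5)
The plan is to factor every transfer through two ``half-operators'' and to prove a uniform half-bound, exactly as in the proof of Theorem~\ref{thm:birooted-k22-certificate}. Write $u,v$ for the variables at the centres $a,b$, put $K(u,v)=\int W(u,z)W(v,z)\,dz$ and $d(u)=\int W(u,z)\,dz$, and let $\mathsf X_s$ have kernel $W(x,u)W(x,v)K(u,v)^{s-1}$. Integrating out the $2t-2$ private vertices gives
\[
\mathsf T_{\mathcal B_{2t}}^W=\mathsf X_t\mathsf X_t^*.
\]
This already yields positive semidefiniteness.

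Split the private vertices into two groups of $t-1$, attached to $\ell$ and to $r$ respectively. A private vertex in state $\tau\in\{\varnothing,\{a\},\{b\},\{a,b\}\}$ contributes the factor $1$, $d(u)$, $d(v)$ or $K(u,v)$. Hence every $S$ factors as
\[
\mathsf T_S^W=\mathsf X_{\sigma_\ell,\boldsymbol\tau^L}\mathsf X_{\sigma_r,\boldsymbol\tau^R}^*,
\]
where the half kernel is
\[
\prod_{c\in\sigma}W(x,\xi_c)\,d(u)^\alpha d(v)^\beta K(u,v)^\gamma .
\]
Put $P=\|\mathsf X_t\|_{S_{2p}}^{1/(2t)}=\|\mathsf T^W_{\mathcal B_{2t}}\|_{S_p}^{1/(4t)}$. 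The goal is the half-bound
\[
\|\mathsf X_{\sigma,\boldsymbol\tau}\|_{S_{2p}}\le P^{h},\qquad h=|\sigma|+\alpha+\beta+2\gamma .
\]
Since the two values of $h$ add up to $e(S)$, Schatten H\"older at exponents $(2p,2p)$ then gives \eqref{eq:k2even-schatten-local}.

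The half-bound splits by $|\sigma|$.

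\textbf{Case $|\sigma|\le1$.} When $\sigma=\varnothing$ the half-operator has rank one, and its $S_{2p}$-norm is the $L^2$-norm of $d^\alpha d^\beta K^\gamma$. When $|\sigma|=1$ one centre coordinate appears in the kernel only linearly in $W$. In both subcases I would expand $\|\cdot\|_{S_{2p}}^{2p}$ as a cycle integral, list the coordinate multiplicities, and apply Lemma~\ref{lem:finner-coordinate}. The aim is to bound the result by $\operatorname{tr}(\mathsf L_t^p)^{h/(2t)}$, using $|d|,|K|$ bounds through Cauchy--Schwarz, e.g. $d(u)^2\le K(u,u)$.

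\textbf{Case $|\sigma|=2$, balanced pattern $\alpha=\beta=0$.} This is $\mathsf X_{1+\gamma}$. Lemma~\ref{lem:truncated-schatten-interpolation} with $r=2p$ gives
\[
\|\mathsf X_s\|_{S_{2p}}\le\|\mathsf X_1\|_{S_{2p}}^{(t-s)/(t-1)}\|\mathsf X_t\|_{S_{2p}}^{(s-1)/(t-1)}.
\]
It therefore suffices to prove the endpoint estimate $\|\mathsf X_1\|_{S_{2p}}\le P^2$, i.e. $\|K\circ K\|_{S_p}\le\|\mathsf L_t\|_{S_p}^{1/t}$. I would obtain this from the cycle expansion of $\operatorname{tr}((\mathsf X_t^*\mathsf X_t)^p)$ on the $(u,v)$-space, whose kernel is
\[
K(u,u')K(v,v')K(u,v)^{t-1}K(u',v')^{t-1}.
\]
A cycle Cauchy--Schwarz/H\"older argument, as in Theorem~\ref{thm:birooted-k22-certificate}, should peel off the $K^{t-1}$ weights. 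I expect this endpoint to be the main obstacle, since the weights are signed and only the Gram structure of $K$ is available.

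\textbf{Case $|\sigma|=2$, diagonal pattern $\alpha=\beta=k$.} Set $\mathsf M_{k,k,\gamma}=\mathsf Y\mathsf Y^*$. The kernel $D(u,v)=d(u)d(v)$ satisfies $D\preceqG K$, so Lemma~\ref{lem:schur-power-order} gives
\[
D^{\circ k}\circ K^{\circ\gamma}\preceqG K^{\circ(k+\gamma)}.
\]
Applying this order to the test vectors $h_{x,y}(u,v)=W(x,u)W(x,v)W(y,u)W(y,v)$ and using Lemma~\ref{lem:quadratic-pointwise-comparison}, I would get the pointwise domination $0\le m\le\ell$ between the kernels of $\mathsf M_{k,k,\gamma}$ and of $\mathsf L_{1+k+\gamma}=\mathsf X_{1+k+\gamma}\mathsf X_{1+k+\gamma}^*$. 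Lemma~\ref{lem:integer-schatten-monotonicity} then transfers this to $S_p$-norms, and the balanced case finishes the bound. This is where integrality of $p$ enters.

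\textbf{Case $|\sigma|=2$, off-diagonal pattern $\alpha\ne\beta$.} Here I would write $\operatorname{tr}(\mathsf M_{\alpha,\beta,\gamma}^p)$ as a cycle integral and apply Cauchy--Schwarz, splitting $d(u)^\alpha d(v)^\beta$ symmetrically. This bounds it by the geometric mean of the traces of $\mathsf M_{\alpha,\alpha,\gamma}^p$ and $\mathsf M_{\beta,\beta,\gamma}^p$, which reduces to two diagonal cases with the correct average exponent.
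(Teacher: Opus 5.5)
Your architecture matches the paper's: the half-operator factorization, the split by $|\sigma|$, interpolation for balanced states, Gram order with Lemma~\ref{lem:integer-schatten-monotonicity} for diagonal states, and cycle Cauchy--Schwarz for off-diagonal states. The gap is the normalization step that ties everything to $P$. Finner's inequality outputs $\|d\|_{L^{2t}}$ and $\|K\|_{L^{2t}([0,1]^2)}$, and you never compare these with $P=\|\mathsf L_t\|_{S_p}^{1/(4t)}$. So neither the states with $|\sigma|\le1$ nor the interpolation endpoint $s=1$ (which you flag as the main obstacle) is actually proved. Since the balanced bound feeds the diagonal and off-diagonal cases, the whole $|\sigma|=2$ branch is also left open.

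The missing idea is to test the full transfer against $\one$. Integrating out $x,y$ in the kernel of $\mathsf L_t$ gives
\[
\int_{[0,1]^2}K(u,v)^{2t}\,du\,dv=\langle\one,\mathsf L_t\one\rangle\le\|\mathsf L_t\|_{\mathrm{op}}\le\|\mathsf L_t\|_{S_p}=P^{4t}.
\]
Next, $K-D$ is the Gram kernel of $u\mapsto W(u,\cdot)-d(u)$, so $D\preceqG K$. Lemma~\ref{lem:schur-power-order} then makes $K^{\circ2t}-D^{\circ2t}$ Gram-positive, and testing it against $\one$ gives $\|d\|_{L^{2t}}^{4t}\le\int K^{2t}$. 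With these two bounds your Finner expansions close. The endpoint then takes one line:
\[
\|\mathsf X_1\|_{S_{2p}}^2=\|K\circ K\|_{S_p}\le\|K\circ K\|_{S_2}=\|K\|_{L^4}^2\le\|K\|_{L^{2t}}^2\le P^4 .
\]
Your suggested pointwise bound $d(u)^2\le K(u,u)$ cannot play this role. Take the step kernel of an $n\times n$ symmetric Sylvester--Hadamard matrix. Then $K(u,u)\equiv1$, while $\mathsf L_t$ has constant kernel $1/n$. So $\int K(u,u)^t\,du=1$ is far larger than $P^{2t}=n^{-1/2}$.

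The cycle route you sketch for the endpoint rests on a miscomputed kernel. $\mathsf X_t^*\mathsf X_t$ has kernel
\[
K(u,v)^{t-1}K(u',v')^{t-1}\int_0^1W(x,u)W(x,v)W(x,u')W(x,v')\,dx,
\]
which involves a four-point function, not $K(u,u')K(v,v')K(u,v)^{t-1}K(u',v')^{t-1}$.

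There are also two smaller repairs. In the diagonal case the comparison you need is $D^{\circ2k}\circ K^{\circ2\gamma}\preceqG K^{\circ2(k+\gamma)}$, tested on the one-variable function $g_{xy}(u)=W(x,u)W(y,u)$, because
\[
M_{k,k,\gamma}(x,y)=\iint g_{xy}(u)g_{xy}(v)\,d(u)^{2k}d(v)^{2k}K(u,v)^{2\gamma}\,du\,dv .
\]
With your exponents $k,\gamma$ the quadratic forms do not reproduce the kernels of $\mathsf M_{k,k,\gamma}$ and $\mathsf L_{1+k+\gamma}$. In the off-diagonal case, apply Cauchy--Schwarz first pointwise in $(x,y)$ to the semidefinite form with kernel $K^{2\gamma}$. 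This gives $|M_{\alpha,\beta,\gamma}|^2\le M_{\alpha,\alpha,\gamma}M_{\beta,\beta,\gamma}$, and only then do you apply Cauchy--Schwarz along the cycle. A direct symmetric split of the whole cycle integral does not control the signs of $W$ and $K$.
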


The conclusion is stronger than universality: the universal even-Schatten condition only requires even exponents, whereas the theorem proves the local inequality for every integer $p\ge2$.

\begin{proof}
Fix once and for all a split of the $2t-2$ private vertices into labelled sets $Z_L$ and $Z_R$, each of size $t-1$.  For a subgraph $S\subseteq\mathcal B_{2t}$, let $\sigma_\ell,\sigma_r\subseteq\{a,b\}$ record the retained edges at the two roots, and let $\boldsymbol\tau^L,\boldsymbol\tau^R$ record the retained incident edges at the private vertices in $Z_L,Z_R$.  The half-operators defined below satisfy
\begin{equation}\label{eq:k2even-architecture-factorization}
        \mathsf T_S^W
        =\mathsf X_{\sigma_\ell,\boldsymbol\tau^L}
         \mathsf X_{\sigma_r,\boldsymbol\tau^R}^{*},
\end{equation}
and their half-edge counts add to $e(S)$.  It is therefore enough to prove a uniform Schatten bound for one arbitrary half-state.

\medskip
\noindent\emph{Step 1: full transfer and normalization.}
Put
\[
        d(u)=\int_0^1W(u,z)\,dz,
        \qquad
        K(u,v)=\int_0^1W(u,z)W(v,z)\,dz.
\]
The kernel $K$ is Gram-positive, with feature map $u\mapsto W(u,\cdot)$, and its integral operator is $A_W^2$.  Let $\mathsf X_t:L^2([0,1]^2)\to L^2([0,1])$ be the full half-operator with kernel
\begin{equation}\label{eq:k2even-full-half}
        \Phi_t(x;u,v)=W(x,u)W(x,v)K(u,v)^{t-1}.
\end{equation}
The full transfer operator $\mathsf L_t:=\mathsf T_{\mathcal B_{2t}}^W$ has kernel
\begin{equation}\label{eq:k2even-full-transfer}
        L_t(x,y)
        =\int W(x,u)W(x,v)W(y,u)W(y,v)K(u,v)^{2t-2}\,du\,dv,
\end{equation}
so
\[
        \mathsf L_t=\mathsf X_t\mathsf X_t^*\succeqop0.
\]
Set
\begin{equation}\label{eq:k2even-P-P0}
        P=\|\mathsf L_t\|_{S_p}^{1/(4t)},
        \qquad
        P_0=\left(\int_{[0,1]^2}|K(u,v)|^{2t}\,du\,dv\right)^{1/(4t)}.
\end{equation}
Integrating \eqref{eq:k2even-full-transfer} first in $x$ and $y$ gives
\[
        P_0^{4t}=\langle\one,\mathsf L_t\one\rangle
        \le\|\mathsf L_t\|_{\mathrm{op}}
        \le\|\mathsf L_t\|_{S_p}=P^{4t},
\]
and hence
\begin{equation}\label{eq:k2even-P0P}
        P_0\le P.
\end{equation}

Let $D(u,v)=d(u)d(v)$.  Both $D$ and $K-D$ have explicit Gram representations:
\[
        D(u,v)=\langle d(u)\one,d(v)\one\rangle_{L^2([0,1])},
\]
and
\[
        K(u,v)-D(u,v)
        =\int_0^1\bigl(W(u,z)-d(u)\bigr)
                    \bigl(W(v,z)-d(v)\bigr)\,dz.
\]
Thus $D\preceqG K$.  Lemma~\ref{lem:schur-power-order} gives
\[
        D^{\circ 2t}\preceqG K^{\circ 2t}.
\]
The Gram-positive difference induces a positive semidefinite integral operator.  Testing that operator against $\one$ and using \eqref{eq:k2even-P-P0}, we obtain
\begin{equation}\label{eq:k2even-dKnorms}
        \|d\|_{L^{2t}}\le P_0,
        \qquad
        \|K\|_{L^{2t}([0,1]^2)}=P_0^2.
\end{equation}

\medskip
\noindent\emph{Step 2: a uniform half-operator comparison.}
For $\sigma\subseteq\{a,b\}$, define
\[
        A_\sigma(x;u,v)=\prod_{c\in\sigma}W(x,\xi_c),
        \qquad \xi_a=u,\quad \xi_b=v.
\]
For a private vertex, a state $\tau\subseteq\{a,b\}$ records which of its two incident edges are retained.  After integrating that vertex, put
\[
        c_\varnothing=1,
        \qquad c_{\{a\}}=d(u),
        \qquad c_{\{b\}}=d(v),
        \qquad c_{\{a,b\}}=K(u,v).
\]
For a private vertex $z_j$ in state $\tau_j$, direct integration gives
\[
        \int_0^1\prod_{c\in\tau_j}W(\xi_c,z_j)\,dz_j
        =c_{\tau_j}(u,v).
\]
The root variable contributes exactly $A_\sigma(x;u,v)$.  Hence, for states $\boldsymbol\tau=(\tau_1,\ldots,\tau_{t-1})$, the half-operator $\mathsf X_{\sigma,\boldsymbol\tau}$ has kernel
\begin{equation}\label{eq:k2even-general-half}
        A_\sigma(x;u,v)\prod_{j=1}^{t-1}c_{\tau_j}(u,v).
\end{equation}
Write
\[
\begin{aligned}
        \alpha&=|\{j:\tau_j=\{a\}\}|,\\
        \beta&=|\{j:\tau_j=\{b\}\}|,\\
        \gamma&=|\{j:\tau_j=\{a,b\}\}|,
\end{aligned}
\qquad
        h=|\sigma|+\alpha+\beta+2\gamma.
\]
The four boundary types and the estimates used below are summarized here.
\begin{center}
\small
\renewcommand{\arraystretch}{1.12}
\begin{tabularx}{0.98\textwidth}{@{}c c c Y c@{}}
\toprule
$\sigma$ & number of states & half-edge count $h$ & principal estimate & output \\
\midrule
$\varnothing$ & $1$ & $\alpha+\beta+2\gamma$ & rank one and Finner & $S_{2p}$ bound $P^h$ \\
$\{a\}$ or $\{b\}$ & $2$ & $1+\alpha+\beta+2\gamma$ & $S_4$ expansion and Finner & $S_{2p}$ bound $P^h$ \\
$\{a,b\}$ & $1$ & $2+\alpha+\beta+2\gamma$ & interpolation, then diagonal/off-diagonal reduction & $S_{2p}$ bound $P^h$ \\
\bottomrule
\end{tabularx}
\end{center}
We claim that
\begin{equation}\label{eq:k2even-half-bound}
        \|\mathsf X_{\sigma,\boldsymbol\tau}\|_{S_{2p}}
        \le P^h.
\end{equation}

\begin{claim}[Endpoint half-states]\label{clm:k2even-endpoint-states}
If $|\sigma|\le1$, then
\[
        \|\mathsf X_{\sigma,\boldsymbol\tau}\|_{S_{2p}}\le P^h.
\]
\end{claim}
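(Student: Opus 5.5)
For both cases I would reduce to the two norm bounds \eqref{eq:k2even-dKnorms}, namely $\|d\|_{L^{2t}}\le P_0$ and $\|K\|_{L^{2t}}=P_0^2$, and then finish with $P_0\le P$ from \eqref{eq:k2even-P0P}. Write
\[
g(u,v)=\prod_{j=1}^{t-1}c_{\tau_j}(u,v)=d(u)^{\alpha}d(v)^{\beta}K(u,v)^{\gamma},
\]
so that by \eqref{eq:k2even-general-half} the kernel of $\mathsf X_{\sigma,\boldsymbol\tau}$ is $A_\sigma(x;u,v)\,g(u,v)$. Throughout, $\alpha+\beta+\gamma\le t-1$. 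The only non-formal point is the multiplicity count needed to apply Lemma~\ref{lem:finner-coordinate} with $r=t$. We use Finner rather than plain H\"older because $\alpha+\beta+\gamma$ may exceed $t/2$; the separate coordinates $u$ and $v$ must be exploited.

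\emph{Case $\sigma=\varnothing$.} Here the kernel $g(u,v)$ does not depend on $x$, so $\mathsf X_{\varnothing,\boldsymbol\tau}$ has rank one. Every Schatten norm of it therefore equals $\|g\|_{L^2([0,1]^2)}$. Write $\|g\|_2^2$ as an integral of a product of factors:
\begin{itemize}[label={}]
\item $\alpha$ copies of $d(u)^2$,
\item $\beta$ copies of $d(v)^2$,
\item $\gamma$ copies of $K(u,v)^2$.
\end{itemize}
The coordinate $u$ lies in $\alpha+\gamma\le t-1$ of these factors, and $v$ lies in $\beta+\gamma\le t-1$. Lemma~\ref{lem:finner-coordinate} with $r=t$ then gives
\[
\|g\|_2^2\le\|d\|_{L^{2t}}^{2(\alpha+\beta)}\,\|K\|_{L^{2t}}^{2\gamma}\le P_0^{2(\alpha+\beta)+4\gamma}=P_0^{2h}.
\]
Hence $\|\mathsf X_{\varnothing,\boldsymbol\tau}\|_{S_{2p}}\le P_0^h\le P^h$.

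\emph{Case $\sigma=\{a\}$.} The case $\{b\}$ is symmetric after exchanging $u,v$ and $\alpha,\beta$. Since $2p\ge4$, monotonicity of Schatten norms gives $\|\mathsf X\|_{S_{2p}}\le\|\mathsf X\|_{S_4}$, so it suffices to bound $\|\mathsf X\|_{S_4}^4=\tr((\mathsf X\mathsf X^*)^2)$. The kernel of $\mathsf X\mathsf X^*$ is $\int W(x,u)W(y,u)g(u,v)^2\,du\,dv$. Integrating out $x$ and $y$ in the cycle of length $2$ produces $K(u,u')^2$. Hence
\[
\|\mathsf X\|_{S_4}^4=\int K(u,u')^2\,g(u,v)^2\,g(u',v')^2\,du\,dv\,du'\,dv'.
\]
All factors are non-negative. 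For the multiplicities:
\begin{itemize}[label={}]
\item $u$ occurs in $K(u,u')^2$, in $\alpha$ copies of $d(u)^2$, and in $\gamma$ copies of $K(u,v)^2$, for a total of $1+\alpha+\gamma\le t$;
\item $v$ occurs $\beta+\gamma\le t-1$ times;
\item $u'$ and $v'$ behave in the same way as $u$ and $v$.
\end{itemize}
Finner with $r=t$ bounds the integral by
\[
\|K\|_{L^{2t}}^{2}\cdot\bigl(P_0^{2(\alpha+\beta)+4\gamma}\bigr)^2=P_0^{4}\,P_0^{4(h-1)}=P_0^{4h}.
\]
Thus $\|\mathsf X_{\{a\},\boldsymbol\tau}\|_{S_{2p}}\le P_0^h\le P^h$.

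The step I expect to need the most care is the multiplicity bookkeeping. The extra factor $K(u,u')^2$ is absorbed only because there are exactly $t-1$ private vertices in each half, which leaves room for one more occurrence of $u$. This is also why the $S_4$ expansion, rather than a higher trace, is used in the case $|\sigma|=1$.
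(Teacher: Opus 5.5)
Your proof is correct and follows the paper's argument. For $\sigma=\varnothing$ you use the rank-one reduction plus Lemma~\ref{lem:finner-coordinate}; for $|\sigma|=1$ you use the reduction $\|\cdot\|_{S_{2p}}\le\|\cdot\|_{S_4}$, the same four-variable expansion and the same multiplicity count; and you close with \eqref{eq:k2even-dKnorms} and \eqref{eq:k2even-P0P}. The only differences are bookkeeping (you pair factors into squares and take $r=t$, whereas the paper counts single absolute-value factors at exponent $2t$) and your closing remark, which is inaccurate: the full $S_{2p}$ cycle expansion passes the same count, with multiplicity $2+2\alpha+2\gamma\le 2t$ at each $u_i$, so $S_4$ is a convenience rather than a necessity.
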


\begin{proof}
Suppose first that $\sigma=\varnothing$.  The operator has rank one and its unique singular value is
\[
        \left\|d(u)^\alpha d(v)^\beta K(u,v)^\gamma\right\|_{L^2(du\,dv)}.
\]
After squaring, regard the integrand as a product of $2\alpha$ copies of $|d(u)|$, $2\beta$ copies of $|d(v)|$, and $2\gamma$ copies of $|K(u,v)|$.  The underlying coordinate hypergraph has vertex set $\{u,v\}$ and factor supports $\{u\}$, $\{v\}$, and $\{u,v\}$.  Its two coordinate degrees are
\[
        2\alpha+2\gamma\le2(t-1),
        \qquad
        2\beta+2\gamma\le2(t-1).
\]
Assigning exponent $2t$ to every factor in Lemma~\ref{lem:finner-coordinate} therefore gives
\[
\begin{aligned}
        \|\mathsf X_{\varnothing,\boldsymbol\tau}\|_{S_{2p}}^2
        &\le
        \|d\|_{L^{2t}}^{2\alpha+2\beta}
        \|K\|_{L^{2t}}^{2\gamma}\\
        &\le P_0^{2\alpha+2\beta+4\gamma}=P_0^{2h},
\end{aligned}
\]
where the last line uses \eqref{eq:k2even-dKnorms}.  Taking square roots and using \eqref{eq:k2even-P0P} proves the required bound.

Next suppose that $|\sigma|=1$, say $\sigma=\{a\}$.  Since $2p\ge4$, monotonicity of Schatten norms in the exponent gives
\[
        \|\mathsf X_{\sigma,\boldsymbol\tau}\|_{S_{2p}}
        \le\|\mathsf X_{\sigma,\boldsymbol\tau}\|_{S_4}.
\]
Expanding the fourth power yields
\begin{align*}
 &\|\mathsf X_{\{a\},\boldsymbol\tau}\|_{S_4}^4\\
 &\quad=\int K(u,u')^2
        d(u)^{2\alpha}d(u')^{2\alpha}
        d(v)^{2\beta}d(v')^{2\beta}
        K(u,v)^{2\gamma}K(u',v')^{2\gamma}
        \,du\,du'\,dv\,dv'.
\end{align*}
After absolute values are inserted, use the coordinate set $\{u,u',v,v'\}$.  There are two copies of $|K(u,u')|$, $2\alpha$ one-coordinate $d$-factors at each of $u,u'$, $2\beta$ at each of $v,v'$, and $2\gamma$ copies of each of $|K(u,v)|$ and $|K(u',v')|$.  Thus the coordinate degrees are
\[
        2(1+\alpha+\gamma)\le2t
        \quad\text{at }u,u',
        \qquad
        2(\beta+\gamma)\le2(t-1)<2t
        \quad\text{at }v,v'.
\]
Finner with exponent $2t$ and \eqref{eq:k2even-dKnorms} give
\[
\begin{aligned}
        \|\mathsf X_{\{a\},\boldsymbol\tau}\|_{S_4}^4
        &\le
        \|K\|_{L^{2t}}^{2+4\gamma}
        \|d\|_{L^{2t}}^{4\alpha+4\beta}\\
        &\le P_0^{4+4\alpha+4\beta+8\gamma}
         =P_0^{4h}.
\end{aligned}
\]
Taking fourth roots and using \eqref{eq:k2even-P0P} proves the claim; the case $\sigma=\{b\}$ is symmetric.
\end{proof}

It remains to take $\sigma=\{a,b\}$.

\begin{claim}[Balanced two-edge half-states]\label{clm:k2even-balanced}
For $1\le s\le t$, let $\mathsf X_s$ have kernel
\begin{equation}\label{eq:k2even-interpolation-half}
        W(x,u)W(x,v)K(u,v)^{s-1}.
\end{equation}
Then
\begin{equation}\label{eq:k2even-interpolation-bound}
        \|\mathsf X_s\|_{S_{2p}}\le P^{2s}
        \qquad(1\le s\le t).
\end{equation}
\end{claim}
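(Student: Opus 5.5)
The plan is to apply the truncated Schatten interpolation, Lemma~\ref{lem:truncated-schatten-interpolation}, with $\Omega=[0,1]^2$ carrying the centre variables $(u,v)$, $Y=[0,1]$ carrying the root variable $x$, $G(x;u,v)=W(x,u)W(x,v)$, kernel multiplier $K(u,v)$, and exponent $r=2p\ge4$. This gives
\[
        \|\mathsf X_s\|_{S_{2p}}
        \le
        \|\mathsf X_1\|_{S_{2p}}^{(t-s)/(t-1)}
        \|\mathsf X_t\|_{S_{2p}}^{(s-1)/(t-1)}.
\]
The target exponents satisfy $2s=2\cdot\frac{t-s}{t-1}+2t\cdot\frac{s-1}{t-1}$. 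It therefore suffices to prove the two endpoint bounds $\|\mathsf X_t\|_{S_{2p}}\le P^{2t}$ and $\|\mathsf X_1\|_{S_{2p}}\le P^{2}$.

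The upper endpoint is immediate from Step~1. The kernel of $\mathsf X_t$ is $\Phi_t$ in \eqref{eq:k2even-full-half}, and $\mathsf X_t\mathsf X_t^*=\mathsf L_t$. Hence
\[
        \|\mathsf X_t\|_{S_{2p}}^2=\|\mathsf L_t\|_{S_p}=P^{4t}.
\]

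For the lower endpoint, $\mathsf X_1$ has kernel $W(x,u)W(x,v)$, and $\mathsf X_1\mathsf X_1^*$ has kernel $K(x,y)^2$. So $\|\mathsf X_1\|_{S_{2p}}^2=\|K\circ K\|_{S_p}$, and we must show
\[
        \|K\circ K\|_{S_p}\le\|\mathsf L_t\|_{S_p}^{1/t}.
\]
Both operators are positive semidefinite: $K\circ K$ by the Schur product theorem, and $\mathsf L_t$ by Step~1. Since $p$ is an integer, both Schatten norms are cycle integrals via Lemma~\ref{lem:cyclic-trace-formula}. Here $\tr((K\circ K)^p)$ is a cycle integral of $\prod_i K(x_i,x_{i+1})^2$. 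Also $\tr(\mathsf L_t^p)$ is the homomorphism-type integral of the cyclic amalgam $C_p(\mathcal B_{2t})$, with $K^{2t-2}$ weights between the centre pairs.

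The main obstacle is this comparison. I would try a H\"older/Finner argument on the cycle representation. Write
\[
        K(x_i,x_{i+1})^2=\Bigl(\int W(x_i,u)W(x_{i+1},u)\,du\Bigr)\Bigl(\int W(x_i,v)W(x_{i+1},v)\,dv\Bigr),
\]
which introduces the centre variables $u_i,v_i$ of the $i$th block. The integrand of $\tr((K\circ K)^p)$ then equals that of $\tr(\mathsf L_t^p)$ with the factors $K(u_i,v_i)^{2t-2}$ deleted, that is, with the private vertices removed. A direct Finner bound is unavailable because of signs. Instead I would proceed through the operator route, in the spirit of the $K_{2,2}$ proof of Theorem~\ref{thm:birooted-k22-certificate}. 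First set
\[
        \mathsf Z_s:=\mathsf X_s^*\mathsf X_s\ \text{ on } L^2([0,1]^2),
\]
with kernel $K(u,u')K(v,v')K(u,v)^{s-1}K(u',v')^{s-1}$. This is $D_s^{1/2}(K\otimes K)D_s^{1/2}$-like, where $D_s$ is multiplication by $K(u,v)^{s-1}$, and $\|\mathsf X_s\|_{S_{2p}}^2=\|\mathsf Z_s\|_{S_p}$.

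Then I would apply the cycle Cauchy--Schwarz / Lyapunov inequality in the form $\tr\bigl((BMB)^p\bigr)$ with a positive operator $M=K\otimes K$ and multiplier $B=|K(u,v)|^{t-1}$, sign-absorbed as in the interpolation lemma. This should yield log-convexity in the power of the multiplier, namely
\[
        \|\mathsf Z_1\|_{S_p}^{\,t}\le \|\mathsf Z_t\|_{S_p}\cdot(\text{a factor controlled by }P_0),
\]
using $\tr(M^p)$ and $\|K\|_{L^{2t}}=P_0^2\le P^2$ from \eqref{eq:k2even-dKnorms}.

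If this fails, the fallback is to bound $\|K\circ K\|_{S_p}\le\|K\circ K\|_{S_2}$-type quantities by a Finner estimate with exponent $2t$. This uses the vertex-degree bookkeeping of Claim~\ref{clm:k2even-endpoint-states}: each centre coordinate has degree at most $2t$, giving control by $\|K\|_{L^{2t}}$-powers and hence by $P_0^{2}\le P^2$ via \eqref{eq:k2even-P0P}. Either way, the endpoint $s=1$ is the crux, and once it holds the interpolation closes \eqref{eq:k2even-interpolation-bound}.
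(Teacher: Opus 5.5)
Your architecture is the paper's. You apply Lemma~\ref{lem:truncated-schatten-interpolation} with $\Omega=[0,1]^2$, multiplier $K$ and exponent $2p$, use the exponent identity $2s=2(1-\theta)+2t\theta$, and get the upper endpoint from $\|\mathsf X_t\|_{S_{2p}}^2=\|\mathsf L_t\|_{S_p}=P^{4t}$. The gap is the lower endpoint $s=1$: you correctly call it the crux but do not prove it. Your primary route also starts from a miscomputed kernel. Since the single root variable $x$ is shared by all four factors, $\mathsf X_s^*\mathsf X_s$ has kernel $K(u,v)^{s-1}K(u',v')^{s-1}\int W(x,u)W(x,v)W(x,u')W(x,v')\,dx$, not $K(u,u')K(v,v')K(u,v)^{s-1}K(u',v')^{s-1}$. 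So the middle positive operator is not $K\otimes K$; your version would give $\|\mathsf Z_1\|_{S_p}=\|K\|_{S_p}^2$, which contradicts your own correct identity $\|\mathsf X_1\|_{S_{2p}}^2=\|K\circ K\|_{S_p}$. The hoped-for log-convexity ``with a factor controlled by $P_0$'' is never given a precise form. The fallback brings in Finner and degree bookkeeping that are not needed, and it is never carried out.

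The missing observation is that you do not need to compare the cycle integrals $\tr((K\circ K)^p)$ and $\tr(\mathsf L_t^p)$ at all. Only the bound $P^4$ is required, so the lower endpoint may be very lossy. Drop to the Hilbert--Schmidt norm, which is the $L^2$ norm of the kernel:
\[
\|\mathsf X_1\|_{S_{2p}}^2=\|K\circ K\|_{S_p}\le\|K\circ K\|_{S_2}=\|K\|_{L^4([0,1]^2)}^2\le\|K\|_{L^{2t}([0,1]^2)}^2=P_0^4\le P^4 .
\]
The steps use monotonicity of Schatten norms in the exponent ($p\ge2$), Lyapunov's inequality on the probability space $[0,1]^2$ ($4\le 2t$), the definition of $P_0$ in \eqref{eq:k2even-P-P0}, and $P_0\le P$ from \eqref{eq:k2even-P0P}, which came from testing $\mathsf L_t$ against $\one$. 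This is exactly the paper's argument for $s=1$. With it inserted, your interpolation step completes the claim.
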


\begin{proof}
At $s=t$ this is equality, because $\mathsf X_t\mathsf X_t^*=\mathsf L_t$.  At $s=1$,
\begin{align*}
        \|\mathsf X_1\|_{S_{2p}}^2
        &=\|K\circ K\|_{S_p}\\
        &\le\|K\circ K\|_{S_2}
         =\|K\|_{L^4}^2
         \le\|K\|_{L^{2t}}^2
         =P_0^4
         \le P^4.
\end{align*}
For $1<s<t$, apply Lemma~\ref{lem:truncated-schatten-interpolation} on
\[
        \Omega=[0,1]^2,
        \qquad
        G(x;u,v)=W(x,u)W(x,v),
\]
with the scalar multiplier $K(u,v)$, endpoint index $t$, and Schatten exponent $2p$.  If
\[
        \theta=\frac{s-1}{t-1},
\]
then the lemma gives
\[
        \|\mathsf X_s\|_{S_{2p}}
        \le
        \|\mathsf X_1\|_{S_{2p}}^{1-\theta}
        \|\mathsf X_t\|_{S_{2p}}^\theta
        \le P^{2(1-\theta)+2t\theta}=P^{2s}.
\]
This proves \eqref{eq:k2even-interpolation-bound}; the truncation, boundary estimates, and convergence in Schatten norm are all contained in Lemma~\ref{lem:truncated-schatten-interpolation}.
\end{proof}

For all non-negative integers $\alpha,\beta,\gamma$ satisfying
\begin{equation}\label{eq:k2even-extended-domain}
        \alpha+\gamma\le t-1,
        \qquad
        \beta+\gamma\le t-1,
\end{equation}
define an auxiliary half-operator $\mathsf Y_{\alpha,\beta,\gamma}:L^2([0,1]^2)\to L^2([0,1])$ with kernel
\[
        W(x,u)W(x,v)d(u)^\alpha d(v)^\beta K(u,v)^\gamma,
\]
and put
\[
        \mathsf M_{\alpha,\beta,\gamma}
        =\mathsf Y_{\alpha,\beta,\gamma}
         \mathsf Y_{\alpha,\beta,\gamma}^*.
\]
For an actual half-state with $\sigma=\{a,b\}$, the stronger condition
$\alpha+\beta+\gamma\le t-1$ holds and
$\mathsf Y_{\alpha,\beta,\gamma}=\mathsf X_{\{a,b\},\boldsymbol\tau}$.  We use the larger domain \eqref{eq:k2even-extended-domain} because the diagonal comparison naturally introduces the auxiliary triples $(\alpha,\alpha,\gamma)$ and $(\beta,\beta,\gamma)$.

Set $h_{xy}(u)=W(x,u)W(y,u)$.  The kernel of $\mathsf M_{\alpha,\beta,\gamma}$ is
\begin{equation}\label{eq:k2even-Mkernel}
 M_{\alpha,\beta,\gamma}(x,y)
 =\iint h_{xy}(u)h_{xy}(v)
 d(u)^{2\alpha}d(v)^{2\beta}K(u,v)^{2\gamma}\,du\,dv.
\end{equation}
The operators $\mathsf Y_{\alpha,\beta,\gamma}$ and $\mathsf X_s$ have bounded kernels on finite measure spaces, hence are Hilbert--Schmidt.  Consequently each $\mathsf M_{\alpha,\beta,\gamma}=YY^*$ and each $\mathsf L_s=\mathsf X_s\mathsf X_s^*$ is positive trace class, and all powers and traces below are well defined.

\begin{claim}[Diagonal two-edge boundary state]\label{clm:k2even-diagonal}
Let $k,\gamma\ge0$ satisfy $k+\gamma\le t-1$, and put $s=1+k+\gamma$.  Then
\[
        \|\mathsf M_{k,k,\gamma}\|_{S_p}\le P^{4s}.
\]
\end{claim}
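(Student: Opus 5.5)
Compare $\mathsf M_{k,k,\gamma}$ with $\mathsf L_s=\mathsf X_s\mathsf X_s^*$, whose kernel is
\[
  L_s(x,y)=\iint h_{xy}(u)h_{xy}(v)K(u,v)^{2s-2}\,du\,dv .
\]
By Claim~\ref{clm:k2even-balanced}, $\|\mathsf L_s\|_{S_p}=\|\mathsf X_s\|_{S_{2p}}^2\le P^{4s}$. It therefore suffices to show $\|\mathsf M_{k,k,\gamma}\|_{S_p}\le\|\mathsf L_s\|_{S_p}$. For this we use Lemma~\ref{lem:integer-schatten-monotonicity}, which needs two inputs: the pointwise inequality $0\le M_{k,k,\gamma}(x,y)\le L_s(x,y)$ almost everywhere, and positive semidefiniteness of both operators. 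The latter holds because both operators have the form $YY^*$.

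The pointwise comparison should come from Lemma~\ref{lem:quadratic-pointwise-comparison}. With $k+\gamma=s-1$ we have $2s-2=2k+2\gamma$, and
\[
  d(u)^{2k}d(v)^{2k}K(u,v)^{2\gamma}=(D^{\circ 2k}\circ K^{\circ 2\gamma})(u,v),
  \qquad
  K(u,v)^{2s-2}=(K^{\circ 2k}\circ K^{\circ 2\gamma})(u,v).
\]
Step~1 gives $D\preceqG K$. Lemma~\ref{lem:schur-power-order} then gives $D^{\circ 2k}\preceqG K^{\circ 2k}$, and its second part, with the Gram-positive multiplier $P=K^{\circ 2\gamma}$, gives
\[
  Q_1:=D^{\circ 2k}\circ K^{\circ 2\gamma}\ \preceqG\ Q_2:=K^{\circ 2s-2}.
\]
Both $Q_1$ and $Q_2$ are Gram-positive by the Schur product theorem. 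The Gram-kernel order passes to the operator order of the induced integral operators on $L^2([0,1]^2)$; the Gram representations are bounded, so this should be a direct integration. Hence $Q_1\preceqop Q_2$, and both are positive semidefinite.

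To put the kernels in quadratic-form shape, take the test function $h_{x,y}(u,v)=h_{xy}(u)h_{xy}(v)$, which lies in $L^2([0,1]^2)$ because $W$ is bounded. Then
\[
  M_{k,k,\gamma}(x,y)=\langle h_{x,y},Q_1h_{x,y}\rangle,\qquad L_s(x,y)=\langle h_{x,y},Q_2h_{x,y}\rangle .
\]
The kernels $Q_i$ are symmetric, so there is no conjugation issue. Lemma~\ref{lem:quadratic-pointwise-comparison} now yields $0\le M_{k,k,\gamma}\le L_s$ almost everywhere, with both kernels bounded. Lemma~\ref{lem:integer-schatten-monotonicity} then applies for the integer $p\ge2$ and gives the claim.

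The step I expect to need the most care is the diagonal requirement that the exponents of $d(u)$ and $d(v)$ coincide. This is exactly what lets the weight $d(u)^{2k}d(v)^{2k}$ be written as a Schur power of the rank-one Gram kernel $D$, so the comparison works only for the diagonal case $\alpha=\beta=k$. The constraint $k+\gamma\le t-1$ guarantees $s\le t$, which is needed to invoke Claim~\ref{clm:k2even-balanced}. The remaining measurability points, namely the passage from Gram order to operator order and the measurable dependence of $h_{x,y}$ on $(x,y)$, are routine because every kernel involved is bounded.
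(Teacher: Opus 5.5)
Your proposal is correct and follows the paper's proof essentially line by line: the Gram comparison $D^{\circ 2k}\circ K^{\circ 2\gamma}\preceqG K^{\circ(2s-2)}$ from Lemma~\ref{lem:schur-power-order}, the quadratic-form representation with test function $h_{xy}$, and then Lemmas~\ref{lem:quadratic-pointwise-comparison} and~\ref{lem:integer-schatten-monotonicity} together with Claim~\ref{clm:k2even-balanced}. One notational slip needs fixing: the kernels $Q_i(u,v)$ induce operators on $L^2([0,1])$, not $L^2([0,1]^2)$, and the test vector should be $h_{xy}\in L^2([0,1])$ itself, since $\langle h_{xy},\mathsf Q_i h_{xy}\rangle=\iint h_{xy}(u)h_{xy}(v)Q_i(u,v)\,du\,dv$; the tensor $h_{xy}\otimes h_{xy}$ appears only when this quantity is rewritten as a pairing with the kernel.
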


\begin{proof}
If $k=\gamma=0$, then $M_{0,0,0}=L_1$ pointwise and the claim follows from \eqref{eq:k2even-interpolation-bound}.  Otherwise define Gram-positive kernels
\[
        Q_{k,\gamma}
        =(D\circ D)^{\circ k}\circ(K\circ K)^{\circ\gamma},
        \qquad
        R_{k,\gamma}
        =(K\circ K)^{\circ(k+\gamma)},
\]
with a factor of exponent zero omitted, and denote their integral operators by $\mathsf Q_{k,\gamma}$ and $\mathsf R_{k,\gamma}$.  Lemma~\ref{lem:schur-power-order}, followed by Schur multiplication with $(K\circ K)^{\circ\gamma}$, gives
\[
        Q_{k,\gamma}\preceqG R_{k,\gamma},
        \qquad
        \mathsf Q_{k,\gamma}\preceqop\mathsf R_{k,\gamma}.
\]
For the test function $h_{xy}(u)=W(x,u)W(y,u)$, formula \eqref{eq:k2even-Mkernel} becomes
\[
        M_{k,k,\gamma}(x,y)
        =\langle h_{xy},\mathsf Q_{k,\gamma}h_{xy}\rangle,
        \qquad
        L_s(x,y)
        =\langle h_{xy},\mathsf R_{k,\gamma}h_{xy}\rangle.
\]
Lemma~\ref{lem:quadratic-pointwise-comparison} therefore gives
\begin{equation}\label{eq:k2even-pointwise-order}
        0\le M_{k,k,\gamma}(x,y)\le L_s(x,y)
        \qquad\text{for almost every }(x,y).
\end{equation}
Both kernels are pointwise non-negative by these quadratic-form representations.  Lemma~\ref{lem:integer-schatten-monotonicity} and \eqref{eq:k2even-interpolation-bound} now yield
\begin{equation}\label{eq:k2even-diagonal-M}
        \|\mathsf M_{k,k,\gamma}\|_{S_p}
        \le\|\mathsf L_s\|_{S_p}
        =\|\mathsf X_s\|_{S_{2p}}^2
        \le P^{4s}.
\end{equation}
\end{proof}

\begin{claim}[Off-diagonal two-edge boundary state]\label{clm:k2even-offdiagonal}
Let $\alpha,\beta,\gamma\ge0$ satisfy \eqref{eq:k2even-extended-domain}.  Then
\begin{equation}\label{eq:k2even-offdiagonal-M}
        \|\mathsf M_{\alpha,\beta,\gamma}\|_{S_p}^2
        \le
        \|\mathsf M_{\alpha,\alpha,\gamma}\|_{S_p}
        \|\mathsf M_{\beta,\beta,\gamma}\|_{S_p}.
\end{equation}
Consequently, if $h=2+\alpha+\beta+2\gamma$, then
\[
        \|\mathsf M_{\alpha,\beta,\gamma}\|_{S_p}\le P^{2h}.
\]
\end{claim}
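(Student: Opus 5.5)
The plan is to write $\|\mathsf M_{\alpha,\beta,\gamma}\|_{S_p}^p=\operatorname{tr}(\mathsf M_{\alpha,\beta,\gamma}^p)$ as a cycle integral, since $\mathsf M_{\alpha,\beta,\gamma}=\mathsf Y\mathsf Y^*$ is positive. We then apply Cauchy--Schwarz on that integral, after a regrouping that separates the $d(u)$ and $d(v)$ weights. Concretely, by Lemma~\ref{lem:cyclic-trace-formula}, with $x_{p+1}=x_1$,
\[
 \operatorname{tr}(\mathsf M_{\alpha,\beta,\gamma}^p)=\int\prod_{i=1}^p h_{x_ix_{i+1}}(u_i)h_{x_ix_{i+1}}(v_i)\,d(u_i)^{2\alpha}d(v_i)^{2\beta}K(u_i,v_i)^{2\gamma}\,d\mathbf x\,d\mathbf u\,d\mathbf v .
\]
Integrating out the $x$-variables first turns $\prod_i h_{x_ix_{i+1}}(u_i)h_{x_ix_{i+1}}(v_i)$ into the product over $i$ of the four-fold kernel $\int W(x,u_{i-1})W(x,v_{i-1})W(x,u_i)W(x,v_i)\,dx$. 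This is not obviously a Gram form separating $\mathbf u$ from $\mathbf v$, so the regrouping has to be done at the operator level.

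For that regrouping, write $\mathsf Y_{\alpha,\beta,\gamma}=\mathsf Y_{0,0,\gamma}\,\mathsf D_\alpha\otimes\mathsf D_\beta$ acting on $L^2([0,1]^2)$. Here $\mathsf D_\alpha$ is multiplication by $d(u)^\alpha$ in the $u$-variable and $\mathsf D_\beta$ is multiplication by $d(v)^\beta$ in the $v$-variable. Set
\[
 \mathsf G=\mathsf Y_{0,0,\gamma}^*\mathsf Y_{0,0,\gamma}\succeqop0 ,
\]
the operator on $L^2([0,1]^2)$ with kernel $\int W(x,u)W(x,v)W(x,u')W(x,v')\,dx\,K(u,v)^\gamma K(u',v')^\gamma$. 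Then $\operatorname{tr}(\mathsf M_{\alpha,\beta,\gamma}^p)=\operatorname{tr}\bigl((\mathsf G\,\Delta)^p\bigr)$ with $\Delta=d(u)^{2\alpha}d(v)^{2\beta}$ a multiplication operator, and we want
\[
 \operatorname{tr}\bigl((\mathsf G\Delta_{\alpha\beta})^p\bigr)^2\le\operatorname{tr}\bigl((\mathsf G\Delta_{\alpha\alpha})^p\bigr)\operatorname{tr}\bigl((\mathsf G\Delta_{\beta\beta})^p\bigr).
\]
The key observation is the swap symmetry $u\leftrightarrow v$. Since $W(x,u)W(x,v)K(u,v)^\gamma$ is symmetric in $(u,v)$, $\mathsf G$ commutes with the swap. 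Hence $\Delta_{\beta\alpha}$, namely $d(u)^{2\beta}d(v)^{2\alpha}$, gives the same trace as $\Delta_{\alpha\beta}$. So it suffices to show that $(a,b)\mapsto\log\operatorname{tr}\bigl((\mathsf G\,e^{a\phi(u)+b\phi(v)})^p\bigr)$ is midpoint-convex along the segment joining $(2\alpha,2\beta)$ to $(2\beta,2\alpha)$, with $\phi=\log|d|$. On that segment the midpoint is $(\alpha+\beta,\alpha+\beta)$, which is not the diagonal pattern we need. Instead I would use the cycle Cauchy--Schwarz directly.

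Write the cycle integrand as $\prod_i g(u_{i-1},v_{i-1};u_i,v_i)\,w_\alpha(u_i)w_\beta(v_i)$. Split each weight as a geometric mean: $d(u)^{2\alpha}d(v)^{2\beta}=\bigl(d(u)^{2\alpha}d(v)^{2\alpha}\bigr)^{1/2}\bigl(d(u)^{2\beta}d(v)^{2\beta}\bigr)^{1/2}\cdot(d(u)/d(v))^{\alpha-\beta}$. The leftover factor $(d(u)/d(v))^{\alpha-\beta}$ is handled by pairing the integrand with its swapped copy and averaging. In operator terms this gives
\[
 \operatorname{tr}\bigl((\mathsf G^{1/2}\Delta_{\alpha\beta}\mathsf G^{1/2})^p\bigr)=\operatorname{tr}\bigl((\mathsf A\mathsf B^*)^p\bigr)-\text{type},
\]
with $\mathsf A=\Delta_{\alpha\alpha}^{1/2}\mathsf G^{1/2}$ and $\mathsf B=\Delta_{\beta\beta}^{1/2}\mathsf G^{1/2}$, up to a unitary swap. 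The inequality $|\operatorname{tr}((\mathsf A\mathsf B^*)^p)|\le\|\mathsf A\mathsf A^*\|_{S_p}^{p/2}\|\mathsf B\mathsf B^*\|_{S_p}^{p/2}$ then follows from Schatten H\"older applied to $\mathsf A^*\mathsf A$ and $\mathsf B^*\mathsf B$. The main obstacle is exactly this factorization step: matching $\Delta_{\alpha\beta}$ with $\Delta_{\alpha\alpha}^{1/2}\,\mathsf S\,\Delta_{\beta\beta}^{1/2}\,\mathsf S$, where $\mathsf S$ is the swap. Pointwise this requires $d(u)^{2\alpha}d(v)^{2\beta}=d(u)^{\alpha}d(v)^{\alpha}\cdot d(v)^{\beta}d(u)^{\beta}$, which is false in general. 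I would therefore first try inserting $\mathsf S$ inside every other factor of the cycle, which is allowed because $\mathsf G\mathsf S=\mathsf S\mathsf G$, so that the exponents average to $(\alpha+\beta)/2$ in each variable around the cycle. If that fails, I would fall back on H\"older in the $(u,v)$-variables for fixed $\mathbf x$, treating $\prod_i d(u_i)^{2\alpha}d(v_i)^{2\beta}$ as a product whose Cauchy--Schwarz partner patterns are $(\alpha,\alpha)$ and $(\beta,\beta)$.

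The consequence follows once \eqref{eq:k2even-offdiagonal-M} holds. Note that $(\alpha,\alpha,\gamma)$ and $(\beta,\beta,\gamma)$ lie in the domain \eqref{eq:k2even-extended-domain}, so Claim~\ref{clm:k2even-diagonal} applies with $s_\alpha=1+\alpha+\gamma$ and $s_\beta=1+\beta+\gamma$. It gives $\|\mathsf M_{\alpha,\beta,\gamma}\|_{S_p}^2\le P^{4s_\alpha+4s_\beta}=P^{4h}$, since $s_\alpha+s_\beta=2+\alpha+\beta+2\gamma=h$. Taking square roots gives $\|\mathsf M_{\alpha,\beta,\gamma}\|_{S_p}\le P^{2h}$.
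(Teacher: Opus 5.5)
Your derivation of the consequence from \eqref{eq:k2even-offdiagonal-M} is correct, but \eqref{eq:k2even-offdiagonal-M} itself is not proved. Every route you sketch either fails or is left unspecified, as you partly acknowledge.

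The trouble starts when you integrate out the $x$-variables. In the $\mathsf G$-formulation both weights $d(u)^{2\alpha}$ and $d(v)^{2\beta}$ sit at the same node of the cycle, and $\mathsf G$ acts on the pair $(u,v)$ jointly. There is then no semidefinite form with the $u$-weight on one side and the $v$-weight on the other.

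Swap insertions cannot repair this. Since $\mathsf S\mathsf G\mathsf S=\mathsf G$ and $\mathsf S\Delta_{\alpha\beta}\mathsf S=\Delta_{\beta\alpha}$, one has $\mathsf G\Delta_{\alpha\beta}=\mathsf S(\mathsf G\Delta_{\beta\alpha})\mathsf S$. Inserting $\mathsf S$ therefore only rewrites the same operator, and any alternating exponent pattern comes with compensating swaps. The $\mathsf A\mathsf B^*$ factorization fails for the pointwise reason you yourself identify.

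Your fallback, H\"older in $(u,v)$ at fixed $\mathbf x$, looks in the right place. However, ordinary H\"older or Cauchy--Schwarz with absolute values produces quantities such as $\iint|h_{xy}(u)h_{xy}(v)|\,|d(u)|^{4\alpha}|K(u,v)|^{2\gamma}\,du\,dv$. These are not the signed diagonal kernels $M_{\alpha,\alpha,\gamma}(x,y)$ that Claim~\ref{clm:k2even-diagonal} controls.

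The missing idea is to keep $(x,y)$ fixed and read \eqref{eq:k2even-Mkernel} as an off-diagonal value of a positive semidefinite form. Let $\mathsf Q_\gamma$ be the operator with kernel $K(u,v)^{2\gamma}$; it is Gram-positive by the Schur product theorem, with constant kernel $1$ when $\gamma=0$. Then
\[
M_{\alpha,\beta,\gamma}(x,y)=\bigl\langle h_{xy}d^{2\alpha},\mathsf Q_\gamma(h_{xy}d^{2\beta})\bigr\rangle,
\qquad
|M_{\alpha,\beta,\gamma}(x,y)|^2\le M_{\alpha,\alpha,\gamma}(x,y)\,M_{\beta,\beta,\gamma}(x,y),
\]
where the second relation is Cauchy--Schwarz for that form. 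Both kernels on the right are pointwise non-negative, being quadratic forms in $\mathsf Q_\gamma$.

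Since $\mathsf M_{\alpha,\beta,\gamma}=\mathsf Y\mathsf Y^*\succeqop0$ and $p$ is an integer, $\|\mathsf M_{\alpha,\beta,\gamma}\|_{S_p}^p=\operatorname{tr}(\mathsf M_{\alpha,\beta,\gamma}^p)$ is the cycle integral of $\prod_i M_{\alpha,\beta,\gamma}(x_i,x_{i+1})$. Bound this by the integral of $\prod_i M_{\alpha,\alpha,\gamma}(x_i,x_{i+1})^{1/2}M_{\beta,\beta,\gamma}(x_i,x_{i+1})^{1/2}$, and apply Cauchy--Schwarz over $\mathbf x$. Non-negativity makes the two resulting cycle integrals exactly $\operatorname{tr}(\mathsf M_{\alpha,\alpha,\gamma}^p)$ and $\operatorname{tr}(\mathsf M_{\beta,\beta,\gamma}^p)$, which gives \eqref{eq:k2even-offdiagonal-M}.
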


\begin{proof}
Let $\mathsf Q_\gamma$ be the positive semidefinite operator with Gram-positive kernel $K(u,v)^{2\gamma}$; when $\gamma=0$, this is the rank-one operator with constant kernel $1$.  Formula \eqref{eq:k2even-Mkernel} can be written as
\[
        M_{\alpha,\beta,\gamma}(x,y)
        =\langle h_{xy}d^{2\alpha},\mathsf Q_\gamma(h_{xy}d^{2\beta})\rangle.
\]
Cauchy--Schwarz for the semidefinite form induced by $\mathsf Q_\gamma$ gives
\[
        |M_{\alpha,\beta,\gamma}(x,y)|^2
        \le
        M_{\alpha,\alpha,\gamma}(x,y)
        M_{\beta,\beta,\gamma}(x,y).
\]
Write $x_{p+1}=x_1$.  Since $\mathsf M_{\alpha,\beta,\gamma}$ is positive semidefinite,
\begin{align*}
 \|\mathsf M_{\alpha,\beta,\gamma}\|_{S_p}^p
 &=\operatorname{tr}(\mathsf M_{\alpha,\beta,\gamma}^p)\\
 &\le \int\prod_{i=1}^p
       |M_{\alpha,\beta,\gamma}(x_i,x_{i+1})|\,d\mathbf x\\
 &\le \int\prod_{i=1}^p
       M_{\alpha,\alpha,\gamma}(x_i,x_{i+1})^{1/2}
       M_{\beta,\beta,\gamma}(x_i,x_{i+1})^{1/2}\,d\mathbf x\\
 &\le
   \left(\int\prod_{i=1}^pM_{\alpha,\alpha,\gamma}(x_i,x_{i+1})\,d\mathbf x\right)^{1/2}
   \left(\int\prod_{i=1}^pM_{\beta,\beta,\gamma}(x_i,x_{i+1})\,d\mathbf x\right)^{1/2}\\
 &=
   \operatorname{tr}(\mathsf M_{\alpha,\alpha,\gamma}^p)^{1/2}
   \operatorname{tr}(\mathsf M_{\beta,\beta,\gamma}^p)^{1/2}.
\end{align*}
This proves \eqref{eq:k2even-offdiagonal-M}.  Put $s_\alpha=1+\alpha+\gamma$ and $s_\beta=1+\beta+\gamma$.  Claim~\ref{clm:k2even-diagonal} gives
\[
        \|\mathsf M_{\alpha,\beta,\gamma}\|_{S_p}
        \le P^{2(s_\alpha+s_\beta)}=P^{2h}.
\]
\end{proof}

Taking square roots in Claim~\ref{clm:k2even-offdiagonal} proves \eqref{eq:k2even-half-bound} when $\sigma=\{a,b\}$, and the half-operator claim is complete.

\medskip
\noindent\emph{Step 3: assembly of an arbitrary subgraph.}

\begin{claim}[Assembly]\label{clm:k2even-assembly}
Every non-empty subgraph $S\subseteq\mathcal B_{2t}$ satisfies \eqref{eq:k2even-schatten-local}.
\end{claim}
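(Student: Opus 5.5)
The plan is to combine the factorization \eqref{eq:k2even-architecture-factorization} with the half-state bound \eqref{eq:k2even-half-bound}, which is now established in all three boundary cases. Fix a non-empty $S\subseteq\mathcal B_{2t}$ and record its retained root edges $\sigma_\ell,\sigma_r$ and the private states $\boldsymbol\tau^L,\boldsymbol\tau^R$ on the fixed split $Z_L,Z_R$.

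\textbf{Step 1: the factorization.} First I would check \eqref{eq:k2even-architecture-factorization} directly from the definition of $F_S^W$. Fix the roots at $x,y$ and integrate out every private vertex. This leaves an integral over the centre variables $(u,v)$ of a product of four groups of factors:
\begin{itemize}[label=\textendash]
\item the left root factors $A_{\sigma_\ell}(x;u,v)$;
\item the right root factors $A_{\sigma_r}(y;u,v)$;
\item the left private factors $\prod_{j\in Z_L}c_{\tau_j}(u,v)$;
\item the right private factors $\prod_{j\in Z_R}c_{\tau_j}(u,v)$.
\end{itemize}
Grouping the left root and left private factors into the kernel \eqref{eq:k2even-general-half} of $\mathsf X_{\sigma_\ell,\boldsymbol\tau^L}$, and the right ones into $\mathsf X_{\sigma_r,\boldsymbol\tau^R}$, gives
\[
F_S^W(x,y)=\int \Phi_L(x;u,v)\,\Phi_R(y;u,v)\,du\,dv .
\]
This is exactly the kernel of $\mathsf X_{\sigma_\ell,\boldsymbol\tau^L}\mathsf X_{\sigma_r,\boldsymbol\tau^R}^*$; the kernels are real, so the adjoint is just transposition. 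The edge count splits as $e(S)=h_L+h_R$, because every edge of $\mathcal B_{2t}$ is incident to exactly one root or exactly one private vertex, and each private vertex lies in exactly one of $Z_L,Z_R$.

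\textbf{Step 2: Schatten H\"older.} With exponents $1/p=1/(2p)+1/(2p)$,
\[
\|\mathsf T_S^W\|_{S_p}\le\|\mathsf X_{\sigma_\ell,\boldsymbol\tau^L}\|_{S_{2p}}\|\mathsf X_{\sigma_r,\boldsymbol\tau^R}\|_{S_{2p}}\le P^{h_L}P^{h_R}=P^{e(S)} .
\]
By the definition of $P$ in \eqref{eq:k2even-P-P0}, $P^{e(S)}=\|\mathsf T_{\mathcal B_{2t}}^W\|_{S_p}^{e(S)/(4t)}$, which is \eqref{eq:k2even-schatten-local}.

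\textbf{Degenerate halves.} One half may carry no edges at all. Then $h=0$ and that half-operator is the rank-one averaging map with $S_{2p}$-norm $1$, so the bound still reads $P^0$. The earlier claims cover this: it is the case $\sigma=\varnothing$, $\alpha=\beta=\gamma=0$ of Claim~\ref{clm:k2even-endpoint-states}.

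\textbf{Positivity.} Positive semidefiniteness of $\mathsf T_{\mathcal B_{2t}}^W$ was already shown in Step~1, since $\mathsf L_t=\mathsf X_t\mathsf X_t^*$.

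\textbf{Main obstacle.} The only real point is the bookkeeping that both halves use the same centre variables $(u,v)$ and that no factor is counted twice. The orientation of the adjoint needs no care, because transposing the kernel in the root variable does not affect singular values.
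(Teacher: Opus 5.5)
Your proposal is correct and follows the paper's proof: you factor $\mathsf T_S^W$ via \eqref{eq:k2even-architecture-factorization}, apply Schatten H\"older with exponents $2p,2p$, and invoke \eqref{eq:k2even-half-bound} together with $h_L+h_R=e(S)$. Your direct verification of the kernel factorization and your check of the edge-free half (the case $\sigma=\varnothing$, $\alpha=\beta=\gamma=0$) only spell out bookkeeping the paper sets up at the start of the proof.
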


\begin{proof}
Let $S\subseteq\mathcal B_{2t}$ be arbitrary, with the encoding fixed at the start of the proof.  By \eqref{eq:k2even-architecture-factorization},
\[
        \mathsf T_S^W
        =\mathsf X_{\sigma_\ell,\boldsymbol\tau^L}
         \mathsf X_{\sigma_r,\boldsymbol\tau^R}^*.
\]
If $h_L,h_R$ are the corresponding half-edge counts, then $h_L+h_R=e(S)$.  Schatten H\"older and \eqref{eq:k2even-half-bound} give
\[
\begin{aligned}
        \|\mathsf T_S^W\|_{S_p}
        &\le
        \|\mathsf X_{\sigma_\ell,\boldsymbol\tau^L}\|_{S_{2p}}
        \|\mathsf X_{\sigma_r,\boldsymbol\tau^R}\|_{S_{2p}}\\
        &\le P^{e(S)}
         =\|\mathsf T_{\mathcal B_{2t}}^W\|_{S_p}^{e(S)/(4t)}.
\end{aligned}
\]
This is \eqref{eq:k2even-schatten-local}.
\end{proof}

Claims~\ref{clm:k2even-endpoint-states}, \ref{clm:k2even-balanced}, \ref{clm:k2even-diagonal}, \ref{clm:k2even-offdiagonal} and \ref{clm:k2even-assembly} complete the proof.
\end{proof}

\subsection{Odd blocks and the parity classification}

\begin{proposition}[Odd complete-bipartite blocks fail at every cyclic length]\label{prop:birooted-k2odd-obstruction}
Let $m\ge3$ be odd.  For every integer $q\ge2$, the cyclic amalgam $C_q(\mathcal B_m)$ is not strongly dominating.  In particular, $\mathcal B_m$ is not a universal even-Schatten block.
\end{proposition}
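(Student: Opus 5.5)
The plan is to use the rank-one signed obstruction mentioned in the introduction. For a bounded real function $f$ on $[0,1]$, put $W(x,y)=f(x)f(y)$. This is a bounded symmetric real kernel. For any graph $H$ the integrand factorises over vertices, so
\[
        t_H(W)=\prod_{v\in V(H)}\int_0^1 f^{\deg_H(v)}.
\]
The aim is to pick $f$ so that some vertex degree of $C_q(\mathcal B_m)$ gives a vanishing moment. This forces $t_{C_q(\mathcal B_m)}(W)=0$, while a single edge $F=K_2\subseteq C_q(\mathcal B_m)$ keeps $t_F(W)=(\int f)^2\neq0$. Then $p_F(W)>0=p_{C_q(\mathcal B_m)}(W)$, so strong domination fails.

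\textbf{Step 1 (degrees).} I would first record the degree sequence of $C_q(\mathcal B_m)$. In each copy $B_i$, the two centres $a_i,b_i$ are adjacent only to the $m$ vertices of the size-$m$ part of that copy, so they have degree $m$. The $m-2$ private vertices have degree $2$. Each junction vertex (the right root of $B_i$ identified with the left root of $B_{i+1}$) has degree $4$. Because the roots of $\mathcal B_m$ are non-adjacent, no parallel edges arise, including when $q=2$. In particular $C_q(\mathcal B_m)$ contains a vertex of odd degree $m$, and the full density is
\[
        t_{C_q(\mathcal B_m)}(W)=\Bigl(\int f^m\Bigr)^{2q}\Bigl(\int f^2\Bigr)^{q(m-2)}\Bigl(\int f^4\Bigr)^{q}.
\]

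\textbf{Step 2 (choice of $f$).} Next I would choose a two-valued $f$ with $\int f^m=0$ but $\int f\neq0$. Take $f=1$ on a set of measure $w$ and $f=-2$ on the complement, with $w=2^m/(1+2^m)$. Then $\int f^m=w-(1-w)2^m=0$ because $m$ is odd. On the other hand,
\[
        \int f=w-2(1-w)=\frac{2^m-2}{1+2^m}>0 \qquad\text{for } m\ge3.
\]
Hence $t_{C_q(\mathcal B_m)}(W)=0$ while $t_{K_2}(W)>0$, which proves that $C_q(\mathcal B_m)$ is not strongly dominating for every $q\ge2$.

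\textbf{Step 3 (the ``in particular'').} The block $\mathcal B_m$ is root-reversible, since swapping the two roots within the size-$m$ part is an automorphism, and its roots are non-adjacent. So Theorem~\ref{thm:exact-cyclic-source} applies. Condition (ii) of that theorem fails already at $q=2$, so condition (i) fails, and $\mathcal B_m$ is not a universal even-Schatten block.

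There is no serious obstacle in this argument. The only points needing care are the degree bookkeeping in Step 1, namely that the junction vertices have even degree $4$ and do not interfere, and the check that $\int f\neq0$ in Step 2.
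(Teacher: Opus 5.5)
Your argument is correct, but it takes a different route from the paper's.

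\textbf{Your route.} You work globally. The two centres of each copy have odd degree $m\ge3$ in $C_q(\mathcal B_m)$, so the vertex factorization for $W=f\otimes f$ makes the full density vanish, and you compare against a single edge $K_2$. This is exactly the odd-degree obstruction of Lemma~\ref{lem:odddegree} applied to the centres. Your bookkeeping of the junction degrees is harmless but unnecessary, since one vanishing factor already makes the product zero.

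\textbf{The paper's route.} It uses the same two-valued $f$ but argues at the level of transfer operators. It computes $\mathsf T_{\mathcal B_m}^W=0$. Let $S$ be the proper birooted subgraph obtained by deleting the two edges at one private vertex. Using $\int f^{m-1}>0$, the paper shows that $\mathsf T_S^W$ is a non-zero positive rank-one operator. It then compares the full amalgam with the subgraph carrying $S$ in every block, whose density is $\operatorname{tr}((\mathsf T_S^W)^q)>0$.

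\textbf{What each buys.} Your route is more elementary: it needs no operator theory and shows that the failure is already witnessed by the smallest possible subgraph. The paper's route exhibits the obstruction as a failure of the local inequality \eqref{eq:intro-universal-schatten} itself, at every exponent $p$, odd as well as even. That makes the ``in particular'' immediate without passing through Theorem~\ref{thm:exact-cyclic-source}, and it shows that the parity phenomenon is intrinsic to the block. Your deduction of the ``in particular'' from the contrapositive of (i)$\Rightarrow$(ii) in Theorem~\ref{thm:exact-cyclic-source} is also valid, since $\mathcal B_m$ is root-reversible with non-adjacent roots.
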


\begin{proof}
Choose the two-valued function
\[
        f=\begin{cases}
        1,&\text{on a set of measure }\dfrac{2^m}{1+2^m},\\[2mm]
        -2,&\text{on its complement},
        \end{cases}
\]
so that
\[
        \int f^m=0,
        \qquad
        \int f^{m-1}>0.
\]
Let $W=f\otimes f$ and $c=\int f^2$.  A direct integration over the two vertices in the size-$2$ part gives
\begin{equation}\label{eq:k2odd-full-zero}
        T_{\mathcal B_m}^W(x,y)
        =f(x)^2f(y)^2c^{m-2}\left(\int f^m\right)^2=0.
\end{equation}
Let $S\subseteq\mathcal B_m$ be obtained by deleting both edges incident with one non-root vertex in the size-$m$ part.  Then
\begin{equation}\label{eq:k2odd-subgraph-positive}
        T_S^W(x,y)
        =f(x)^2f(y)^2c^{m-3}\left(\int f^{m-1}\right)^2,
\end{equation}
which is a non-zero positive semidefinite rank-one kernel.  The full cyclic density is zero by \eqref{eq:k2odd-full-zero}, whereas the subgraph obtained by placing $S$ in every block has density
\[
        \tr((\mathsf T_S^W)^q)>0.
\]
Thus $C_q(\mathcal B_m)$ fails to dominate this subgraph.
\end{proof}

\begin{corollary}[Parity classification of complete-bipartite cyclic sources]\label{cor:complete-bipartite-source-parity}
For every $m,q\ge2$,
\[
        C_q(\mathcal B_m)\text{ is strongly dominating}
        \quad\Longleftrightarrow\quad
        m\text{ is even}.
\]
Equivalently, $\mathcal B_m$ is a universal even-Schatten block if and only if $m$ is even.  When $m$ is even, the local inequality holds for every integer Schatten exponent $p\ge2$, and the full transfer operator is positive semidefinite.
\end{corollary}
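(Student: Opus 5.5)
The corollary assembles results already proved; I would split on the parity of $m$.

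\emph{Even $m$.} For $m=2$, Theorem~\ref{thm:birooted-k22-certificate} gives the local inequality \eqref{eq:k22-schatten-local} at every integer $p\ge2$, together with positive semidefiniteness of $\mathsf T_{\mathcal Q}^W$. For $m=2t$ with $t\ge2$, Theorem~\ref{thm:birooted-k2even-certificate} gives the same pair of conclusions for $\mathcal B_{2t}$. The roots of $\mathcal B_m$ lie in the same part of a bipartite graph, so they are non-adjacent and $C_2(\mathcal B_m)$ creates no parallel edges. For $q\ge3$ the identifications are of distinct vertices in distinct copies, so again no parallel edges arise. For every $q\ge2$, I would then apply Theorem~\ref{thm:cyclic-schatten-criterion}(i) with exponent $p=q$; it yields that $C_q(\mathcal B_m)$ is strongly dominating, odd $q$ included. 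Restricting the local inequalities to even $p$ shows that $\mathcal B_m$ is a universal even-Schatten block. This also gives the closing sentence of the corollary: all integer exponents, plus positive semidefiniteness of the full transfer.

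\emph{Odd $m$.} Here $m\ge3$, and Proposition~\ref{prop:birooted-k2odd-obstruction} shows that $C_q(\mathcal B_m)$ is not strongly dominating for any $q\ge2$. To show that $\mathcal B_m$ is not a universal even-Schatten block, I would use root-reversibility: the transposition of the two root vertices, fixing everything else, is an automorphism of $K_{2,m}$. The implication (i)$\Rightarrow$(ii) of Theorem~\ref{thm:exact-cyclic-source} therefore applies, and universality would force $C_2(\mathcal B_m)$ to be strongly dominating, a contradiction. Alternatively, the rank-one witness already violates the local inequality directly at $p=2$: $\mathsf T_{\mathcal B_m}^W=0$ while $\mathsf T_S^W\neq0$.

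Combining the two cases gives both stated equivalences for all $m,q\ge2$. There is no genuine obstacle; the step needing the most care is the routine verification of the hypotheses of Theorem~\ref{thm:cyclic-schatten-criterion} (no parallel edges) and of root-reversibility for Theorem~\ref{thm:exact-cyclic-source}.
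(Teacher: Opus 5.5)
Your proposal is correct and follows the paper's proof. For even $m$ you combine Theorem~\ref{thm:birooted-k22-certificate} and Theorem~\ref{thm:birooted-k2even-certificate} with Theorem~\ref{thm:cyclic-schatten-criterion}(i) at exponent $p=q$, and for odd $m$ you invoke Proposition~\ref{prop:birooted-k2odd-obstruction}. Your additional verifications (non-adjacent roots and hence no parallel edges, root-reversibility of $K_{2,m}$, and the direct failure of the local inequality at $p=2$) are all sound; they spell out hypotheses the paper leaves implicit.
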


\begin{proof}
For $m=2$, use Theorem~\ref{thm:birooted-k22-certificate}.  For even $m=2t\ge4$, use Theorem~\ref{thm:birooted-k2even-certificate}.  In both cases Theorem~\ref{thm:cyclic-schatten-criterion}(i) gives every cyclic length.  Proposition~\ref{prop:birooted-k2odd-obstruction} gives the converse and, in fact, excludes each cyclic length separately.
\end{proof}

\begin{proof}[Proof of Theorem~\ref{thm:necklace-family}]
The parity assertion is Corollary~\ref{cor:complete-bipartite-source-parity}.  The case $t=1$ is $C_q(\mathcal B_2)=R_q$, which is norming by Proposition~\ref{prop:Rn-reflection}.

Now let $t\ge2$ and write $N_{t,q}=C_q(\mathcal B_{2t})$.  It has
\[
        v(N_{t,q})=q(2t+1),
        \qquad
        e(N_{t,q})=4tq.
\]
The two vertices in the size-$2$ part of each block have degree $2t$, the shared roots have degree $4$, and the remaining vertices have degree $2$.  We check deletion by vertex type.  Deleting a private degree-$2$ vertex leaves its block connected between the two roots.  Deleting one of the two block centres leaves the other centre adjacent to every surviving vertex of that block, so the two roots remain connected through it.  Finally, deleting a shared root opens the cyclic sequence into a connected chain through the remaining roots; when $q=2$, the two surviving block interiors meet at the other shared root.  Thus deletion of any vertex leaves a connected graph, and $N_{t,q}$ is $2$-connected.

An edge from a block centre to a shared root has endpoint-degree pair $(2t,4)$, whereas an edge from a centre to a private vertex has pair $(2t,2)$.  These two edge types lie in different automorphism orbits, so $N_{t,q}$ is not edge-transitive.  The implication chain
\[
        \text{seminorming}\Longrightarrow\text{weakly norming}
        \Longrightarrow\text{edge-transitive}
\]
follows from \cite[Observation~2.5(i)]{Hatami} and \cite[Theorem~1]{SidorenkoEdgeTransitive}.  Therefore $N_{t,q}$ is not seminorming, and hence is not norming.  It contains a cycle, so it is not an even path.

Finally,
\[
        \frac{e(N_{t,q})}{v(N_{t,q})}=\frac{4t}{2t+1}
\]
is strictly increasing in $t$.  Thus an isomorphism determines $t$, and then the vertex count determines $q$.  The family $(N_{t,q})_{t,q\ge2}$ is pairwise non-isomorphic.
\end{proof}

\section{Structural obstructions and the outerplanar boundary}\label{sec:structural}

This section supplies the structural combinatorial half of the paper.  We first isolate four signed tests, then use them to classify trees, graphs with bridges, connected outerplanar graphs, and finally outerplanar cyclic sources.  The vertex-cover obstruction is the main tool: it converts a signed-kernel inequality into a rigid local degree condition.  The dependency of the classification on the witnesses is summarized below.

\begin{center}
\small
\renewcommand{\arraystretch}{1.12}
\begin{tabularx}{0.96\textwidth}{@{}l l Y@{}}
\toprule
forbidden structure & witness or lemma & consequence used later \\
\midrule
non-bipartiteness & Lemma~\ref{lem:bipartite} & every strongly dominating graph is bipartite \\
odd degree away from leaves & Lemmas~\ref{lem:rankone} and \ref{lem:odddegree} & parity restrictions on blocks and cut vertices \\
bridge with a cyclic side & Lemma~\ref{lem:leafcycle} and Theorem~\ref{thm:bridge} & reduction to trees, even paths and even stars \\
maximum-degree vertices fail to cover the edges & Theorem~\ref{thm:vertexcover} & explicit two-atom obstruction \\
degree-$2$ ear in a $2$-connected outerplanar block & Lemmas~\ref{lem:outerplanar-ear} and \ref{lem:two-terminal-outerplanar-ear} & exclusion of every non-cycle block except the stated source blocks \\
\bottomrule
\end{tabularx}
\end{center}

\subsection{Bipartiteness}

\begin{lemma}[Bipartiteness obstruction]\label{lem:bipartite}
If $H$ is strongly dominating, then $H$ is bipartite.
\end{lemma}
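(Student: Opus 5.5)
We argue by contraposition: if $H$ is not bipartite, we exhibit a bounded symmetric kernel $W$ and a non-empty subgraph $F\subseteq H$ with $p_F(W)>p_H(W)$. The natural witness is the signed ``bipartite'' kernel $W(x,y)=-1$ for all $x,y$, i.e.\ the constant kernel $-1$. Then for every graph $G$ we have $t_G(W)=(-1)^{e(G)}$, so $p_G(W)=1$ for every non-empty $G$. This kernel alone gives equality everywhere and is useless, so we perturb it: we take $W=J-\varepsilon\cdot$ something, or better, use a kernel that detects odd cycles through its spectrum.

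The cleaner choice is a kernel with a negative eigenvalue that dominates. We take the two-atom step kernel on $[0,1]$ split into halves $I_0,I_1$, with $W=-1$ across the halves and $W=\delta$ inside each half. For $\delta=0$ this is the complete bipartite kernel. Its density counts weighted homomorphisms into the two-vertex graph with loops of weight $\delta$. For a non-bipartite $H$, every map $V(H)\to\{0,1\}$ leaves at least one edge inside a class. So $t_H(W)=O(\delta)$ as $\delta\to0$, and in particular $t_H(W)\to0$. By contrast, we choose $F\subseteq H$ to be a spanning forest (or a single edge). $F$ is bipartite, so its proper $2$-colourings contribute $\pm2^{c(F)}\cdot2^{-v(H)}$ with a common sign $(-1)^{e(F)}$. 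Hence $|t_F(W)|$ stays bounded away from zero as $\delta\to0$. Concretely, for $F=K_2$ we get $t_F(W)=(\delta-1)/2\to-1/2$, so $p_F(W)\to1/2$ while $p_H(W)\to0$. For small $\delta>0$ this gives $p_F(W)>p_H(W)$, contradicting $H\succeq_sF$.

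The only step requiring care is the claim that $t_H(W)\to0$. We expand $t_H(W)=2^{-v(H)}\sum_{\phi:V(H)\to\{0,1\}}\prod_{uv}w(\phi(u),\phi(v))$ with $w(0,1)=-1$ and $w(i,i)=\delta$. Each term contains a factor $\delta^{k(\phi)}$, where $k(\phi)\ge1$ is the number of monochromatic edges. This step is genuinely the one where non-bipartiteness enters. The rest is immediate. Even $\delta=0$ works directly if we allow it: then $t_H(W)=0$ exactly while $t_{K_2}(W)=-1/2\neq0$, which is already a witness. So we would present the exact kernel $W=-\mathbf 1_{\{x<1/2\le y\}\cup\{y<1/2\le x\}}$ and skip the limiting argument.
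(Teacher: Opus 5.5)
Your final argument is correct and is essentially the paper's proof. The paper uses the unsigned complete bipartite step kernel $1_{A\times B}+1_{B\times A}$, so that $t_H(W)=0$ for non-bipartite $H$ while $t_{K_2}(W)>0$; your $\delta=0$ kernel is the negative of this on two halves, and the sign is irrelevant because $p$ takes absolute values, so the constant-kernel false start and the $\delta$-perturbation can simply be dropped.
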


\begin{proof}
If $H$ is not bipartite, take a non-trivial complete bipartite graphon
\[
        W=1_{A\times B}+1_{B\times A}
\]
with $0<\mu(A),\mu(B)<1$.  Then $t_H(W)=0$, since every homomorphism from $H$ into the support graph of $W$ would give a bipartition of $H$.  But $t_{K_2}(W)>0$.  As $K_2\subseteq H$, this contradicts strong domination.
\end{proof}

\subsection{Rank-one degree moments}

\begin{lemma}[Rank-one factorization]\label{lem:rankone}
Let $G$ be any graph and let $W(x,y)=f(x)f(y)$ for a bounded real function $f$.  Then
\[
        t_G(W)=\prod_{v\in V(G)} \E f^{d_G(v)}.
\]
\end{lemma}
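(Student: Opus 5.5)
The plan is to expand the product of the kernel over edges and integrate out the coordinates one at a time. Since $W(x_u,x_v)=f(x_u)f(x_v)$, the factor $f(x_v)$ appears in
\[
        \prod_{uv\in E(G)}W(x_u,x_v)=\prod_{uv\in E(G)}f(x_u)f(x_v)
\]
once for each edge incident with $v$. Collecting factors vertex by vertex therefore gives the pointwise identity
\[
        \prod_{uv\in E(G)}W(x_u,x_v)=\prod_{v\in V(G)}f(x_v)^{d_G(v)}.
\]
The left-hand integrand is a product of functions of distinct single coordinates. Because $f$ is bounded, every factor is integrable, and Fubini's theorem on $[0,1]^{V(G)}$ with product Lebesgue measure factorizes the integral as
\[
        t_G(W)=\prod_{v}\int_0^1 f(x)^{d_G(v)}\,dx=\prod_v\E f^{d_G(v)}.
\]

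Two conventions need to be stated explicitly. An isolated vertex contributes $\E f^0=1$, which matches the fact that it contributes a factor $1$ to the density. The expectation $\E$ is taken with respect to the uniform probability measure on $[0,1]$, and more generally with respect to the underlying probability space.

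There is no genuine obstacle here. The only point needing care is that the identity is exact and carries signs: no absolute values are taken. This is precisely what the later obstructions exploit, choosing $f$ so that some odd moment $\E f^{k}$ vanishes, as in the proof of Proposition~\ref{prop:birooted-k2odd-obstruction}.
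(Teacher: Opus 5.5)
Your proof is correct and is the paper's argument: you collect the factors $f(x_v)$ vertex by vertex to get the pointwise identity $\prod_{uv\in E(G)}W(x_u,x_v)=\prod_{v\in V(G)}f(x_v)^{d_G(v)}$, then integrate, with boundedness of $f$ justifying the Fubini factorization into single-coordinate moments. One wording slip, which does not affect the argument: it is the right-hand side of that identity, not the ``left-hand integrand'', that is a product of functions of distinct coordinates.
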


\begin{proof}
For a homomorphism variable $(x_v)_{v\in V(G)}$,
\[
        \prod_{uv\in E(G)} W(x_u,x_v)
        =
        \prod_{uv\in E(G)} f(x_u)f(x_v)
        =
        \prod_{v\in V(G)} f(x_v)^{d_G(v)}.
\]
Integrating over all vertices gives the formula.
\end{proof}

\begin{lemma}[Odd-degree obstruction]\label{lem:odddegree}
If $H$ is strongly dominating, then every vertex degree of $H$ is either $1$ or even.
\end{lemma}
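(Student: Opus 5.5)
We prove the contrapositive using rank-one kernels, exactly as in the odd complete-bipartite obstruction. Suppose $H$ is strongly dominating and has a vertex $v$ of odd degree $d=d_H(v)\ge3$. Take $W=f\otimes f$ with $f$ a bounded two-valued function chosen so that $\E f^{d}=0$ while every even moment of $f$ is strictly positive and $\E f^{j}\neq0$ for the odd exponents $j<d$ that occur. Concretely, as in Proposition~\ref{prop:birooted-k2odd-obstruction}, take $f=1$ on a set of measure $2^{d}/(1+2^{d})$ and $f=-2$ on its complement. Then $\E f^{d}=0$. For odd $j<d$,
\[
        \E f^{j}=\frac{2^{d}-2^{j}}{1+2^{d}}>0,
\]
and for every even $j\ge2$ the moment $\E f^{j}$ is positive. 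By Lemma~\ref{lem:rankone}, $t_H(W)=\prod_u\E f^{d_H(u)}=0$, since the factor at $v$ vanishes.

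Next we find a non-empty subgraph $F\subseteq H$ with $t_F(W)\neq0$. Lemma~\ref{lem:rankone} reduces this to exhibiting $F$ in which no vertex has degree exactly $d$ in $F$, and more generally no degree $k$ with $\E f^{k}=0$. Since $f$ takes two values of different absolute value, $\E f^{k}=0$ forces $2^{k}=2^{d}$, that is, $k=d$. So the only forbidden $F$-degree is $d$. The simplest choice is $F=K_2$, a single edge $uw$ of $H$. Its density is $(\E f)^2$, and $\E f>0$ by the odd-moment formula with $j=1<d$. Hence $p_F(W)>0=p_H(W)$, contradicting strong domination.

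This argument has a gap when $H$ has other vertices whose degree is also $d$; these cause no trouble, because vanishing of one factor already gives $t_H(W)=0$. There is, however, a real issue to check: the chosen value ratio must not annihilate $\E f^{1}$. This happens only if $d=1$, which is exactly the excluded leaf case. That is why degree $1$ is permitted, and we would remark on this: when $d=1$, $H=K_2$ or $H$ is handled differently, since $\E f=0$ then also kills every subgraph containing an edge at a leaf. The main obstacle is therefore only the bookkeeping that $\E f^{k}\neq0$ for all $k\ne d$, which follows from the injectivity of $k\mapsto 2^{k}$ and from the explicit formula above for odd $k$.
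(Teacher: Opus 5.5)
Your proof is correct and is essentially the paper's argument: the paper uses the same rank-one kernel $W=f\otimes f$ with $f\in\{1,-a\}$ (for a general $a>0$, $a\ne1$, where you take $a=2$), so that $\E f^d=0$ forces $t_H(W)=0$ while $t_{K_2}(W)=(\E f)^2>0$. Your closing paragraph about a ``gap'' and the degree-$1$ case is unnecessary and confusingly worded, since $d\ge3$ is assumed and $F=K_2$ already suffices.
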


\begin{proof}
Suppose some vertex of $H$ has odd degree $d\ge3$.  Choose $a>0$, $a\ne1$, and let
\[
        f=
        \begin{cases}
        1,&\text{with probability }\dfrac{a^d}{1+a^d},\\[2mm]
        -a,&\text{with probability }\dfrac1{1+a^d}.
        \end{cases}
\]
Then
\[
        \E f^d=0,
        \qquad
        \E f=\frac{a^d-a}{1+a^d}\ne0.
\]
For $W=f\otimes f$, Lemma \ref{lem:rankone} gives $t_H(W)=0$, while
\[
        t_{K_2}(W)=(\E f)^2>0.
\]
Since $K_2\subseteq H$, strong domination fails.
\end{proof}

\subsection{Zero-row-sum cancellation}

\begin{lemma}[Leaf-cycle obstruction]\label{lem:leafcycle}
If a graph $H$ has a leaf and contains a cycle, then $H$ is not strongly dominating.
\end{lemma}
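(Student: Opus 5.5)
The plan is to use a single rank-one kernel with vanishing row sums. Such a kernel kills every graph that has a leaf, but it keeps cycle densities strictly positive. Fix a bounded real function $g$ on $[0,1]$ with
\[
        \int_0^1 g=0,\qquad c:=\int_0^1 g^2>0,
\]
for instance $g=1$ on $[0,\tfrac12)$ and $g=-1$ on $[\tfrac12,1]$. Then put $W=g\otimes g$, that is, $W(x,y)=g(x)g(y)$. This kernel is bounded, symmetric and real, so it is admissible in the definition of strong domination.

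\emph{Step 1: the full density vanishes.} Let $v$ be a leaf of $H$ with unique neighbour $u$. In the integral defining $t_H(W)$, the variable $x_v$ occurs only in the single factor $W(x_u,x_v)=g(x_u)g(x_v)$. Integrating $x_v$ first, by Fubini (everything is bounded on a probability space), produces the factor $g(x_u)\int g=0$. Hence $t_H(W)=0$ and $p_H(W)=0$. Equivalently, Lemma~\ref{lem:rankone} gives $t_H(W)=\prod_{w}\E g^{d_H(w)}$, and the leaf contributes the factor $\E g=0$.

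\emph{Step 2: a cycle subgraph has positive density.} Let $C\subseteq H$ be a cycle of length $k\ge3$, and take $F$ to be the spanning edge-subgraph of $H$ with edge set $E(C)$. Isolated vertices contribute the factor $1$. Every vertex of $C$ has degree $2$ in $F$, so Lemma~\ref{lem:rankone} gives
\[
        t_F(W)=\bigl(\E g^2\bigr)^k=c^k>0.
\]
The same value can be read off from the operator side: $t_F(W)=\operatorname{tr}(A_W^k)$, and $A_W$ is rank one with eigenvalue $c$. Therefore $p_F(W)=c>0=p_H(W)$. Since $F$ is a non-empty subgraph of $H$, the relation $H\succeq_s F$ fails, and $H$ is not strongly dominating.

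Neither step is a real obstacle. The only point needing care is that the cycle and the leaf may share vertices, or that the leaf's neighbour may lie on the cycle. This does not matter, because $F$ is defined to use only the cycle edges, so the leaf edge is absent from $F$, while Step 1 uses only the leaf edge inside $H$. No bipartiteness or parity assumption is needed; odd cycles are also covered, although Lemma~\ref{lem:bipartite} already handles those.
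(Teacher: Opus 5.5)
Your proof is correct and is essentially the paper's argument: the same rank-one kernel $W=g\otimes g$ with $\E g=0$ and $\E g^2>0$, integrating out the leaf to get $t_H(W)=0$, and using Lemma~\ref{lem:rankone} to get $t_C(W)=(\E g^2)^{|V(C)|}>0$. One small inaccuracy is harmless: a leaf has degree $1$, so it can never lie on a cycle, although its neighbour can.
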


\begin{proof}
Let $W=f\otimes f$, where $\E f=0$ and $\E f^2>0$.  If $u$ is a leaf of $H$ with neighbour $v$, then integrating first over $x_u$ gives
\[
        \int_0^1 W(x_u,x_v)\,dx_u=f(x_v)\E f=0.
\]
Therefore $t_H(W)=0$.

Let $C\subseteq H$ be a cycle.  Since every vertex of $C$ has degree $2$ within $C$, Lemma \ref{lem:rankone} gives
\[
        t_C(W)=(\E f^2)^{|V(C)|}>0.
\]
Thus $p_H(W)=0<p_C(W)$, contradicting $H\succeq_s C$.
\end{proof}

\subsection{An explicit additive certificate for the maximum-degree obstruction}

\begin{theorem}[Additive maximum-degree obstruction]\label{thm:vertexcover}
Let $H$ be a graph with at least one edge, maximum degree $\Delta$, and vertex-cover number $\tau(H)$.  If
\[
        \tau(H)>\frac{e(H)}{\Delta},
\]
then, for all sufficiently large $M$, a two-atom additive signed kernel $W_M$ satisfies
\[
        p_{K_{1,\Delta}}(W_M)>p_H(W_M).
\]
Consequently every strongly dominating graph satisfies
\[
        \tau(H)=\frac{e(H)}{\Delta(H)}.
\]
\end{theorem}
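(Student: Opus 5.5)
The plan is to use an additive kernel $W(x,y)=g(x)+g(y)$ with $g$ taking two values. For such a kernel $t_H$ expands over orientations. Writing each edge factor $g(x_u)+g(x_v)$ and choosing one endpoint per edge, every choice is an orientation $\vec H$ of $H$ in which each edge points to the chosen endpoint. This gives
\[
 t_H(W)=\sum_{\vec H}\prod_{v\in V(H)}\E\, g^{d^-_{\vec H}(v)},
\]
where $d^-$ is the in-degree. The point is to pick $g$ so that each vertex with positive in-degree costs a small factor, whatever its in-degree.

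Concretely, I take $g=\one_A$ with $\mu(A)=1/M$. This is the two-atom function with atoms $1$ and $0$, and it can be rescaled freely by $p_H(cW)=|c|p_H(W)$. Then $\E g^0=1$ and $\E g^k=M^{-1}$ for $k\ge1$. Hence every term in the expansion is non-negative and equals $M^{-c(\vec H)}$, where $c(\vec H)$ is the number of vertices with positive in-degree. The heads of the edges form a vertex cover, so $c(\vec H)\ge\tau(H)$. Equality is attained by orienting every edge into a fixed minimum cover. Since the number of orientations is at most $2^{e(H)}$, this yields
\[
 M^{-\tau(H)}\le t_H(W_M)\le 2^{e(H)}M^{-\tau(H)},
\]
and so $p_H(W_M)=\Theta\bigl(M^{-\tau(H)/e(H)}\bigr)$.

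For the star $K_{1,\Delta}$, the same bound applies with $\tau(K_{1,\Delta})=1$: orient all edges into the centre. This gives $p_{K_{1,\Delta}}(W_M)=\Theta(M^{-1/\Delta})$. A vertex of maximum degree together with its incident edges is a copy of $K_{1,\Delta}$ inside $H$. If $\tau(H)/e(H)>1/\Delta$, the ratio $p_{K_{1,\Delta}}(W_M)/p_H(W_M)$ is at least a constant times $M^{\tau/e-1/\Delta}$, which tends to infinity. Hence $p_{K_{1,\Delta}}(W_M)>p_H(W_M)$ for all large $M$. The kernel is in fact non-negative, which is more than enough for a signed-domination witness.

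For the consequence, note that every vertex covers at most $\Delta$ edges, so $\tau(H)\ge e(H)/\Delta$ holds for every graph. A strongly dominating $H$ must dominate its subgraph $K_{1,\Delta}$, so strict inequality is excluded and equality follows. The only step needing care is the two-sided bound on $t_H$. Here non-negativity of every summand avoids cancellation, and the lower bound uses a single cover-orientation term. I expect no real obstacle beyond tracking the constant $2^{e(H)}$, which disappears under the $1/e(H)$ power as $M\to\infty$.
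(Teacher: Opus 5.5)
Your proof is correct, and its skeleton matches the paper's: an additive kernel $g(x)+g(y)$, expansion over the choice of one endpoint per edge, and the observation that the chosen endpoints form a vertex cover.

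The difference is the choice of $g$. The paper uses the mean-zero, heavy-tailed variable $\phi_M$, with $\E\phi=0$, $\E\phi^2=1$ and $\E\phi^r\sim M^{r-2}$. Load-one terms then vanish by cancellation, and Lemma~\ref{lem:additivebound} gives only the upper bound $|t_H(W_M)|=O(M^{e(H)-2\tau(H)})$. This is compared with $\E\phi^\Delta=\Theta(M^{\Delta-2})$, which needs $\Delta\ge2$; that is automatic under the hypothesis.

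Your sparse indicator $g=\one_A$ makes every term non-negative and equal to $M^{-c}$. This gives the sharper two-sided estimate $t_H(W_M)=\Theta(M^{-\tau(H)})$ with no cancellation bookkeeping. Your witness is also a non-negative kernel, so you in fact show that $H$ fails to dominate $K_{1,\Delta}$ even in the ordinary non-negative sense. This is consistent with the paper's remark that the equality consequence is already implicit in Conlon--Lee.

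The paper's choice buys a genuinely sign-changing witness. It is built from the same $\phi_M$ that Lemma~\ref{lem:spiders} reuses, where $\E\phi=0$ and $\E\phi^2=1$ give $A\one=\phi$ and $A\phi=\one$. It also matches the word ``signed'' in the statement literally. Strong domination quantifies over all bounded symmetric real kernels, so your non-negative two-atom kernel is still an admissible witness. This is a difference of presentation, not a gap.
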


For connected hosts, the equality consequence is already implicit in Conlon--Lee \cite[Proposition~2.3]{ConlonLeeDomination}.  We prove the stronger certificate statement in Theorem \ref{thm:vertexcover}, valid without a connectivity hypothesis.  For a graph $G$, let $\tau(G)$ denote its vertex-cover number and $\Delta(G)$ its maximum degree.

For $M>1$, define a two-point random variable $\phi=\phi_M$ by
\[
        \phi=
        \begin{cases}
        M,&\text{with probability }\dfrac1{M^2+1},\\[2mm]
        -\dfrac1M,&\text{with probability }\dfrac{M^2}{M^2+1}.
        \end{cases}
\]
Then
\[
        \E\phi=0,
        \qquad
        \E\phi^2=1,
\]
and for every fixed integer $r\ge3$,
\[
        \E\phi^r
        =\frac{M^r+(-1)^rM^{2-r}}{M^2+1}
        =M^{r-2}+O(M^{r-4})
        \qquad(M\to\infty),
\]
with the implicit constant depending on $r$.

Let
\[
        W_M(x,y)=\phi(x)+\phi(y).
\]

\begin{lemma}[Additive expansion bound]\label{lem:additivebound}
For every graph $G$,
\[
        t_G(W_M)=O\left(M^{e(G)-2\tau(G)}\right)
\]
as $M\to\infty$.  If no surviving term exists in the expansion below, then $t_G(W_M)=0$, which is interpreted as satisfying the same bound.
\end{lemma}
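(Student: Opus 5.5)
The plan is to expand the product over edges and integrate vertex by vertex. Write
\[
        \prod_{uv\in E(G)}W_M(x_u,x_v)
        =\prod_{uv\in E(G)}\bigl(\phi(x_u)+\phi(x_v)\bigr)
        =\sum_{\omega}\prod_{v\in V(G)}\phi(x_v)^{d_\omega(v)},
\]
where $\omega$ ranges over the $2^{e(G)}$ orientations of $E(G)$. Each edge chooses one endpoint to carry its factor $\phi$, and $d_\omega(v)$ is the number of edges assigned to $v$. The variables $x_v$ are independent and uniform, so each term integrates to $\prod_v\E\phi^{d_\omega(v)}$. Since $\E\phi=0$, every term in which some vertex receives exactly one edge vanishes. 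The surviving terms are those with $d_\omega(v)\in\{0\}\cup\{2,3,\dots\}$ for all $v$. If there are none, then $t_G(W_M)=0$, which is the degenerate case in the statement.

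For a surviving $\omega$, let $U=\{v:d_\omega(v)\ge2\}$. Every edge is assigned to one of its endpoints, and that endpoint has positive assigned degree, hence lies in $U$. So $U$ is a vertex cover and $|U|\ge\tau(G)$. Using $\E\phi^0=1$, $\E\phi^2=1$ and $\E\phi^r=O(M^{r-2})$ for $r\ge3$ (uniformly with $\E\phi^2=M^{0}$), each factor with $d=d_\omega(v)\ge2$ is $O(M^{d-2})$. The term is therefore
\[
        O\Bigl(M^{\sum_{v\in U}(d_\omega(v)-2)}\Bigr)=O\bigl(M^{e(G)-2|U|}\bigr)\le O\bigl(M^{e(G)-2\tau(G)}\bigr),
\]
because $\sum_v d_\omega(v)=e(G)$ and $M>1$.

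There are only finitely many orientations, $2^{e(G)}$, and the implied constants depend only on the degrees up to $e(G)$. Summing the terms then gives the bound with a constant depending only on $G$.

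The only delicate point is making sure the vertex-cover argument uses exactly the set of vertices with nonzero assigned degree. Vertices of assigned degree $0$ contribute the factor $1$ and need not be in the cover. Vertices of assigned degree $1$ kill the term, so none occur among the surviving terms.
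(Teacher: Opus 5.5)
Your proposal is correct and follows the paper's proof essentially step for step. You expand over the $2^{e(G)}$ endpoint assignments, discard terms with a load-one vertex using $\E\phi=0$, note that the positively loaded vertices form a vertex cover, and bound each surviving term by $O(M^{e(G)-2|U|})\le O(M^{e(G)-2\tau(G)})$.
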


\begin{proof}
Expand
\[
        \prod_{uv\in E(G)}(\phi(x_u)+\phi(x_v)).
\]
Each term corresponds to a choice of one endpoint $\omega(e)\in e$ for every edge $e$.  Let
\[
        a_\omega(v)=|\{e\in E(G):\omega(e)=v\}|
\]
be the load at $v$.  The contribution of $\omega$ is
\[
        \prod_{v\in V(G)} \E\phi^{a_\omega(v)}.
\]
This contribution vanishes if $a_\omega(v)=1$ for some $v$, since $\E\phi=0$.

If the contribution is non-zero, then every positive load is at least $2$.  The set
\[
        S_\omega=\{v:a_\omega(v)>0\}
\]
is a vertex cover of $G$, since each edge is assigned to one of its endpoints.  Therefore $|S_\omega|\ge \tau(G)$.  Also
\[
        \sum_{v\in S_\omega} a_\omega(v)=e(G).
\]
Using $\E\phi^0=1$, $\E\phi^2=1$, and $\E\phi^r=O(M^{r-2})$ for $r\ge3$, the absolute value of the contribution of $\omega$ is
\[
        O\left(M^{\sum_{v\in S_\omega}(a_\omega(v)-2)}\right)
        =O\left(M^{e(G)-2|S_\omega|}\right)
        \le O\left(M^{e(G)-2\tau(G)}\right).
\]
There are $2^{e(G)}$ choices of $\omega$, so summing over all choices proves the bound.
\end{proof}

\begin{proof}[Proof of Theorem \ref{thm:vertexcover}]
Let $\Delta=\Delta(H)$, and let $v$ be a vertex of degree $\Delta$.  Then $H$ contains $K_{1,\Delta}$.  If $\Delta=1$, the strict hypothesis $\tau(H)>e(H)/\Delta$ is impossible, so assume $\Delta\ge2$.  For the additive kernel $W_M$,
\[
        t_{K_{1,\Delta}}(W_M)=\E\phi^\Delta,
        \qquad
        |t_{K_{1,\Delta}}(W_M)|=\Theta(M^{\Delta-2}).
\]
Lemma~\ref{lem:additivebound} gives
\[
        |t_H(W_M)|=O(M^{e(H)-2\tau(H)}).
\]
If $\tau(H)>e(H)/\Delta$, then
\[
        \frac{e(H)-2\tau(H)}{e(H)}
        <
        \frac{\Delta-2}{\Delta}.
\]
Hence $p_H(W_M)<p_{K_{1,\Delta}}(W_M)$ for all sufficiently large $M$.  Thus $H$ is not strongly dominating.

Consequently, every strongly dominating graph satisfies $\tau(H)\le e(H)/\Delta(H)$.  The reverse inequality is the elementary covering bound $e(H)\le\Delta(H)\tau(H)$, so equality follows.
\end{proof}

\begin{corollary}[Equality structure]\label{cor:equalitycover}
Let $H$ be strongly dominating, and let $\Delta=\Delta(H)$.  Then there is a minimum vertex cover $S$ of $H$ such that every vertex in $S$ has degree $\Delta$, and every edge of $H$ has exactly one endpoint in $S$.
\end{corollary}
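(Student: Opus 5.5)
The corollary follows from Theorem~\ref{thm:vertexcover} together with the elementary counting behind the covering bound $e(H)\le\Delta(H)\tau(H)$. Since $H$ is strongly dominating and has at least one edge, Theorem~\ref{thm:vertexcover} gives $\tau(H)=e(H)/\Delta$. Fix any minimum vertex cover $S$, so $|S|=\tau(H)$. I will double count the edges against the degrees in $S$. Write $\epsilon(S)$ for the number of edges with both endpoints in $S$. Every edge has at least one endpoint in $S$, so
\[
        e(H)+\epsilon(S)=\sum_{v\in S}d_H(v)\le\Delta|S|=\Delta\tau(H)=e(H).
\]

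Both inequalities in this chain must therefore be equalities. From $\sum_{v\in S}d_H(v)=\Delta|S|$, every vertex of $S$ has degree exactly $\Delta$. From $\epsilon(S)=0$, no edge has both endpoints in $S$, so each edge has exactly one endpoint in $S$.

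This shows that every minimum vertex cover has the stated property, so in particular one exists. The only point needing care is that $\Delta\ge1$, so that the division in Theorem~\ref{thm:vertexcover} makes sense; this holds because $H$ has an edge. There is no real obstacle: the substance lies in Theorem~\ref{thm:vertexcover}, and the corollary is just the equality case of the covering bound.
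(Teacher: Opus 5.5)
Your proof is correct and is essentially the paper's argument: fix any minimum vertex cover $S$, use Theorem~\ref{thm:vertexcover} to close the chain $e(H)\le\sum_{v\in S}d_H(v)\le\Delta\tau(H)=e(H)$, and read off both equality cases. Your identity $\sum_{v\in S}d_H(v)=e(H)+\epsilon(S)$ makes explicit the paper's remark that no edge is counted twice.
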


\begin{proof}
Let $S$ be any minimum vertex cover.  Since $S$ covers every edge,
\[
        e(H)\le \sum_{v\in S} d_H(v)\le \Delta |S|=\Delta\tau(H)=e(H).
\]
Thus both inequalities are equalities.  Hence every $v\in S$ has degree $\Delta$, and no edge is counted twice in $\sum_{v\in S}d_H(v)$.  The latter means that no edge has both endpoints in $S$, while the fact that $S$ is a vertex cover means that every edge has at least one endpoint in $S$.
\end{proof}

\begin{corollary}[Maximum-degree cover obstruction]\label{cor:maxdegreecover}
Let $H$ be strongly dominating, and let
\[
        V_\Delta(H)=\{v\in V(H):d_H(v)=\Delta(H)\}.
\]
Then $V_\Delta(H)$ is a vertex cover of $H$.  Equivalently, $H$ has no edge whose two endpoints both have degree smaller than $\Delta(H)$.
\end{corollary}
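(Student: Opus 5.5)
This follows almost immediately from Corollary~\ref{cor:equalitycover}, so I will derive it from there. Let $H$ be strongly dominating and put $\Delta=\Delta(H)$. Corollary~\ref{cor:equalitycover}, which rests on Theorem~\ref{thm:vertexcover} through the identity $\tau(H)=e(H)/\Delta$, supplies a minimum vertex cover $S$ in which every vertex has degree exactly $\Delta$. Then $S\subseteq V_\Delta(H)$. Any superset of a vertex cover is again a vertex cover, so $V_\Delta(H)$ is a vertex cover of $H$.

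For the equivalent formulation, I unwind the definition. A set $V_\Delta(H)$ fails to be a vertex cover exactly when some edge $uv$ has $u,v\notin V_\Delta(H)$. That is the same as $d_H(u)<\Delta$ and $d_H(v)<\Delta$, because every degree is at most $\Delta$. So the statement is that no edge has both endpoints of degree smaller than $\Delta$.

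The edgeless case is excluded, since a strongly dominating graph is assumed to have an edge, so $\Delta\ge1$ and the corollary applies.

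There is essentially no obstacle here. The only point to check is that Corollary~\ref{cor:equalitycover} produces a cover consisting entirely of maximum-degree vertices, not merely some minimum cover. Its proof gives this directly from the equality chain $e(H)\le\sum_{v\in S}d_H(v)\le\Delta|S|=e(H)$, which forces $d_H(v)=\Delta$ for every $v\in S$.
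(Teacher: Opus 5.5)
Your proposal is correct and is essentially the paper's proof: Corollary~\ref{cor:equalitycover} supplies a minimum vertex cover consisting of degree-$\Delta$ vertices, so it lies inside $V_\Delta(H)$, and any superset of a vertex cover is again a vertex cover. Your unwinding of the equivalent formulation, and your remark that a strongly dominating graph has an edge, are also fine.
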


\begin{proof}
By Corollary \ref{cor:equalitycover}, $H$ has a vertex cover contained in $V_\Delta(H)$.  Hence the larger set $V_\Delta(H)$ is also a vertex cover.
\end{proof}

\begin{corollary}[Terminal-block obstruction]\label{cor:terminalblock}
Let $H$ be a connected graph with maximum degree $\Delta$.  Let $B$ be a terminal block of $H$ with cut vertex $x$.  If $B$ contains an edge $uv$ with $u,v\ne x$ and
\[
        d_H(u)<\Delta,\qquad d_H(v)<\Delta,
\]
then $H$ is not strongly dominating.
\end{corollary}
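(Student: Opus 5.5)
This follows directly from Corollary~\ref{cor:maxdegreecover}, with connectivity and the terminal-block hypothesis serving only to locate the offending edge. Suppose, for contradiction, that $H$ is strongly dominating. Corollary~\ref{cor:maxdegreecover} then says that the set $V_\Delta(H)$ of maximum-degree vertices is a vertex cover of $H$. So every edge of $H$ has at least one endpoint of degree exactly $\Delta$.

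The edge $uv$ supplied by the hypothesis is an edge of $H$, because $B$ is a block of $H$ and hence a subgraph of it. Its two endpoints satisfy $d_H(u)<\Delta$ and $d_H(v)<\Delta$. Therefore neither $u$ nor $v$ lies in $V_\Delta(H)$, so $uv$ is not covered by $V_\Delta(H)$. This contradicts the corollary, and so $H$ is not strongly dominating.

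An explicit witness is also available. Since $uv$ is uncovered by $V_\Delta(H)$, Corollary~\ref{cor:equalitycover} fails, so $\tau(H)>e(H)/\Delta$. Theorem~\ref{thm:vertexcover} then gives, for all large $M$, $p_{K_{1,\Delta}}(W_M)>p_H(W_M)$ for the two-atom additive kernel $W_M$.

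The conditions $u,v\ne x$ and the terminal-block structure are not needed for the contradiction itself. They only fix the setting in which the corollary will be applied later: inside a pendant block, degrees computed in $B$ agree with degrees in $H$ away from the cut vertex $x$. There is no genuine obstacle here. The only point to check is that "edge of $B$" means an edge of $H$ with degrees computed in $H$, and this is exactly how the hypothesis is stated.
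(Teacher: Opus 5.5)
Your proposal is correct and follows the paper's proof exactly: assume strong domination, invoke Corollary~\ref{cor:maxdegreecover} to get that $V_\Delta(H)$ is a vertex cover, and note that $uv$ has both endpoints outside it. Your extra remarks are also sound. The explicit two-atom witness via $\tau(H)>e(H)/\Delta$ is the same route the paper takes in Theorem~\ref{thm:witness}. You are also right that the terminal-block hypotheses only locate the edge.
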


\begin{proof}
If $H$ were strongly dominating, Corollary \ref{cor:maxdegreecover} would force $V_\Delta(H)$ to be a vertex cover.  But the edge $uv$ has both endpoints outside $V_\Delta(H)$, a contradiction.
\end{proof}

\begin{remark}
Corollary \ref{cor:terminalblock} is the cut-vertex mechanism used in the outerplanar reductions below.  It does not claim that every cut vertex is impossible: terminal blocks with enough maximum-degree vertices can pass the cover test.  Thus the obstruction is a reduction tool, not a complete cut-vertex classification.  The cyclic constructions above show that this warning is essential.
\end{remark}

\subsection{Trees}

We now classify strongly dominating trees.  The positive examples $K_2$, even paths, and even stars have already been proved in Propositions~\ref{prop:evenpaths} and~\ref{prop:evenstars}.  The converse has four steps.

\begin{theorem}[Tree classification]\label{thm:trees}
Let $T$ be a tree with at least one edge.  Then $T$ is strongly dominating if and only if
\[
        T\cong K_2,
        \qquad
        T\cong P_{2s}\ (s\ge1),
        \qquad\text{or}\qquad
        T\cong K_{1,2s}\ (s\ge1).
\]
\end{theorem}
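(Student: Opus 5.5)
The converse direction is the substance, since Propositions~\ref{prop:evenpaths} and~\ref{prop:evenstars} already show that $K_2$, $P_{2s}$ and $K_{1,2s}$ are strongly dominating.  So let $T$ be a strongly dominating tree with $\Delta=\Delta(T)$.  If $\Delta=1$ then $T\cong K_2$.  Otherwise Lemma~\ref{lem:odddegree} says every degree is $1$ or even, so in particular $\Delta\ge2$ is even.  Corollary~\ref{cor:equalitycover} supplies a minimum vertex cover $S$ of $T$ with three properties: every vertex of $S$ has degree $\Delta$, $S$ is independent, and every edge has exactly one endpoint in $S$.  Since $\Delta\ge2$, no leaf lies in $S$, so every leaf hangs off a vertex of $S$.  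Every vertex outside $S$ has all of its neighbours in $S$.  Thus $T$ is a bipartite ``centre/satellite'' tree: the centres $S$ all have degree $\Delta$, and the satellites are leaves or vertices of even degree at least $2$.

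\emph{Case $\Delta=2$.}  Here $T=P_k$.  The relation $e=\Delta\tau$ gives $\lceil k/2\rceil=\tau(P_k)=k/2$, so $k$ is even and $T\cong P_{k}$ with $k=2s$.  This also reproves directly that odd paths fail, through Theorem~\ref{thm:vertexcover}.

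\emph{Case $\Delta\ge4$.}  The goal is $|S|=1$, which gives $T\cong K_{1,\Delta}$ with $\Delta$ even.  Suppose $|S|\ge2$.  Because $T$ is connected and bipartite with parts $S$ and $V\setminus S$, some satellite $w$ of degree $2j\ge2$ joins two centres $u,v$.  Take $F\subseteq T$ to be the closed star of $u$, so $F\cong K_{1,\Delta}$.  For the additive kernels $W_M$, Lemma~\ref{lem:additivebound} and the computation in Theorem~\ref{thm:vertexcover} show that $p_T$ and $p_F$ agree to leading order $M^{(\Delta-2)/\Delta}$.  The plan is to go to the next order.  In the load expansion of $t_T(W_M)$, the leading terms are the assignments $\omega$ with $S_\omega=S$, and each centre then receives load exactly $\Delta$.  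Every such term carries the factor $(\E\phi^\Delta)^{|S|}$, and there are exactly as many of them as there are ways to orient the satellites' edges consistently with this.  Each satellite of degree at least $2$ must send all its edges to its centres with none to itself, which forces a unique $\omega$.

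First-order corrections come from assignments in which a satellite $w$ absorbs two of its own edges, giving the factor $\E\phi^2=1$, while each of the two corresponding centres loses one unit of load.  These contribute a term of relative size $M^{-2}$ with a sign, multiplied by a combinatorial count that depends on the satellites of degree at least $2$.  For the star $F$ no such correction exists beyond the intrinsic $O(M^{-2})$ error in $\E\phi^\Delta$.  The hope is that the comparison $t_T^{1/e(T)}$ versus $t_F^{1/\Delta}$ at relative order $M^{-2}$ then comes out with the wrong sign for $T$.

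If the signs are unhelpful, there is a fallback.  Perturb the two-atom law to a three-atom $\phi$ with $\E\phi=0$, $\E\phi^2=1$ and a tunable $\E\phi^{2j}$, and exploit the extra centre through the positive-density subgraph consisting of the stars at $u$ and $v$ with one of the edges at $w$ deleted.

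This second-order comparison is the main obstacle.  The first-order vertex-cover test is exactly tight on every centre/satellite tree, so excluding trees with two or more centres requires either a finer expansion of the additive witness or a new witness that separates multiple centres.  I would first try the $M^{-2}$ expansion on the smallest case, the double star joined through a degree-$2$ satellite, to see which sign or which three-atom perturbation works, and then extend it by locality to general trees.
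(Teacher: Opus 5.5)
The parts you complete are correct. Corollary~\ref{cor:equalitycover} does give the centre/satellite structure. The case $\Delta=2$ yields even paths, and $\tau(P_k)=\lceil k/2\rceil$ is a valid replacement for the two-eigenvalue witness of Lemma~\ref{lem:oddpaths}. The case $|S|=1$ gives $K_{1,\Delta}$ with $\Delta$ even. However, the case $\Delta\ge4$, $|S|\ge2$ is left open, and this is exactly where the vertex-cover test is tight. Take two degree-$4$ centres joined through a degree-$2$ vertex, each carrying three pendant leaves: then $e(T)=8=\Delta\,\tau(T)$. So as written, the proposal does not prove the theorem.

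\textbf{Your planned second-order route cannot work.} For the two-point law, $\E\phi=0$ and
\[
\E\phi^r=\frac{M^r+(-1)^rM^{2-r}}{M^2+1}>0 \qquad (r\ge2,\ M>1).
\]
Hence every surviving term in the load expansion of Lemma~\ref{lem:additivebound} is non-negative. In particular, the satellite double-absorption corrections you identify enter with a plus sign. Keeping only the term that assigns every edge to its endpoint in $S$ gives
\[
 t_T(W_M)\ge(\E\phi^\Delta)^{|S|}=t_{K_{1,\Delta}}(W_M)^{|S|}.
\]
Since $e(T)=\Delta|S|$, this says $p_T(W_M)\ge p_{K_{1,\Delta}}(W_M)$ for every $M$. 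So no refinement of the additive witness against the maximum star can separate a multi-centre tree. The three-atom fallback is not specified concretely enough to assess.

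\textbf{The missing idea} is the one the paper uses: a biased Rademacher rank-one kernel $W=f\otimes f$ with $f=\pm1$ and $\E f=\varepsilon\in(0,1)$. All internal degrees are even, so Lemma~\ref{lem:rankone} gives $t_T(W)=\varepsilon^{L}$, where $L$ is the number of leaves. A diameter path gives $t_{P_D}(W)=\varepsilon^2$. Domination then forces $L/e(T)\le 2/D$ (Lemma~\ref{lem:rademacher}). The leaf--diameter inequality $e(T)\le LD/2$ of Lemma~\ref{lem:leafdiam} holds with equality only for paths and equal-arm spiders. Together these rule out every tree with even internal degrees that has two vertices of degree at least $3$, hence every centre/satellite tree with $|S|\ge2$. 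In the example above, $6/8>2/4$.

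Inserting this step closes your argument, and the result is a legitimate reordering of the paper's proof. The paper applies the Rademacher--diameter step first and then kills odd paths and non-star spiders with separate witnesses (Lemmas~\ref{lem:oddpaths} and~\ref{lem:spiders}). Your vertex-cover reduction disposes of those two cases directly.
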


\paragraph{Odd internal degrees.}

If $T$ has an internal vertex of odd degree at least $3$, then Lemma \ref{lem:odddegree} already shows that $T$ is not strongly dominating.  Hence every strongly dominating tree has only even internal degrees.

\paragraph{Leaf-diameter rigidity.}

Assume now that all internal degrees of $T$ are even.  Let
\[
        m=e(T),\qquad L=\#\{\text{leaves of }T\},\qquad D=\diam(T).
\]

\begin{lemma}[Rademacher leaf obstruction]\label{lem:rademacher}
If $T$ is strongly dominating and all its internal degrees are even, then
\[
        \frac{L}{m}\le \frac2D.
\]
\end{lemma}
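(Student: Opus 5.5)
The plan is to compare $T$ with a longest path inside it, using a rank-one kernel built from a slightly biased Rademacher variable. With such a kernel, by Lemma~\ref{lem:rankone}, each leaf contributes a small first moment, while every internal vertex has even degree and so contributes an even moment close to $1$. Let $P\subseteq T$ be a path realizing the diameter, so $P\cong P_D$ has $D$ edges. Its two endpoints have degree $1$ in $P$ and its interior vertices have degree $2$. Strong domination gives $T\succeq_s P$, and the aim is to read off the exponent inequality $L/m\le 2/D$ from this.

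Concretely, let $\varepsilon$ be a Rademacher variable on $[0,1]$, fix a small parameter $\eta\in(0,1)$, and put $f=\varepsilon+\eta$ and $W=f\otimes f$; this is a bounded symmetric real kernel. By Lemma~\ref{lem:rankone},
\[
        t_T(W)=(\E f)^{L}\prod_{v\ \text{internal}}\E f^{d_T(v)},
        \qquad
        t_P(W)=(\E f)^2(\E f^2)^{D-1}.
\]
Here $\E f=\eta$ and $\E f^2=1+\eta^2$. For even $d$, expanding $(\varepsilon+\eta)^d$ and using $\E\varepsilon^k\in\{0,1\}$ gives
\[
        \E f^d=\sum_{k\ \text{even}}\binom dk\eta^{d-k}=1+O(\eta^2).
\]
Every internal degree is even by hypothesis, so each internal factor lies in $[1,C_T]$ for $\eta\le1$, where $C_T$ depends only on $T$. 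Hence
\[
        p_T(W)=\Theta\bigl(\eta^{L/m}\bigr),
        \qquad
        p_P(W)=\Theta\bigl(\eta^{2/D}\bigr)
        \qquad(\eta\to0).
\]

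Strong domination requires $p_T(W)\ge p_P(W)$ for every $\eta$, that is, $\eta^{L/m}\ge c\,\eta^{2/D}$ for some constant $c>0$ independent of $\eta$. Letting $\eta\to0$, this forces $L/m\le 2/D$, which is the claim.

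The only point needing care is that the internal even-moment factors stay bounded above and below uniformly in $\eta$, so that the leaves alone determine the power of $\eta$. This holds because $\E(\varepsilon+\eta)^d\ge1$ for even $d$, every term in its expansion being non-negative. The remaining checks are routine: the diametral path is a genuine subgraph with $D\ge1$ edges, and the degenerate case $T=K_2$ gives $L/m=2=2/D$, so the inequality holds there with equality.
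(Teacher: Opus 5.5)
Your proof is correct and follows the paper's argument: a rank-one kernel $f\otimes f$ built from a biased $\pm$ variable, with Lemma~\ref{lem:rankone} isolating the leaves' first moments, compared against a diametral path $P_D$. The only difference is cosmetic. The paper biases the probabilities of $\pm1$ rather than shifting the values, so every even moment is exactly $1$ and $t_T(W)=\varepsilon^L$, $t_{P_D}(W)=\varepsilon^2$ hold with equality; this removes the need for your uniform constant $C_T$.
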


\begin{proof}
Let
\[
        f=
        \begin{cases}
        1,&\text{with probability }(1+\varepsilon)/2,\\
        -1,&\text{with probability }(1-\varepsilon)/2,
        \end{cases}
\]
where $0<\varepsilon<1$.  Then
\[
        \E f=\varepsilon,
        \qquad
        \E f^{2j}=1\quad(j\ge1).
\]
For $W=f\otimes f$, Lemma \ref{lem:rankone} gives
\[
        t_T(W)=\varepsilon^L,
\]
because only leaves contribute odd moments.  Let $P_D\subseteq T$ be a diameter path.  Then
\[
        t_{P_D}(W)=\varepsilon^2.
\]
Since $T\succeq_s P_D$, we must have
\[
        \varepsilon^{L/m}\ge \varepsilon^{2/D}
\]
for every $0<\varepsilon<1$, which implies $L/m\le 2/D$.
\end{proof}

\begin{lemma}[Leaf-diameter inequality]\label{lem:leafdiam}
Every tree $T$ satisfies
\[
        e(T)\le \frac{L(T)\diam(T)}2.
\]
Equality holds if and only if $T$ is a path or an equal-arm spider.
\end{lemma}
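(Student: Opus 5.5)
We must prove Lemma~\ref{lem:leafdiam}: every tree $T$ satisfies $e(T)\le L(T)\diam(T)/2$, with equality exactly for paths and equal-arm spiders.

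The plan is to cover the edges of $T$ by leaf-to-centre paths. Let $D=\diam(T)$, and first treat the case where $D$ is even, say $D=2R$. Then $T$ has a unique central vertex $c$, and every vertex lies within distance $R$ of $c$. Root $T$ at $c$. For each edge $e$, pass to the child side of $e$ and follow any downward path to a leaf. This shows that every edge lies on the path from $c$ to some leaf. The union of the $L$ paths from $c$ to the leaves (if $c$ is itself a leaf, $T$ is a single edge or a path, which we handle trivially) therefore covers $E(T)$. Each of these paths has at most $R=D/2$ edges, so
\[
e(T)\le L\cdot D/2 .
\]

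When $D=2R+1$ is odd, the centre is an edge $c_1c_2$. We root at this edge and treat it separately. Each leaf $x$ lies on one side, say the $c_i$-side, and its path to $c_i$ has length at most $R$. These paths cover every edge except $c_1c_2$, so
\[
e(T)\le LR+1 .
\]
This is at most $L(2R+1)/2=LR+L/2$ because $L\ge2$.

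The equality analysis is the main work. In the odd case, equality would force $1=L/2$, so $L=2$ and $T$ is a path; conversely, every path attains equality. In the even case, equality requires two things. First, the $L$ leaf-to-centre paths must be edge-disjoint, since any overlap would count an edge twice in the covering bound. Second, each of these paths must have length exactly $R$. Edge-disjointness means every vertex other than $c$ has at most one child, so the tree is a spider centred at $c$. Equal lengths $R$ then make it an equal-arm spider. Paths are the special case with $L=2$, and when $c$ has degree $2$ this is consistent with the spider description. For the converse, an equal-arm spider with $L$ arms of length $R\ge1$ has $LR$ edges and diameter $2R$, so it attains equality when $L\ge2$.

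The main obstacle is the degenerate bookkeeping at the boundary cases. In particular, $K_2$ has odd diameter, and for $L=2$ the two descriptions (path and equal-arm spider) coincide, so these cases need to be checked separately against the stated characterization.
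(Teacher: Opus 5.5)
Your proposal is correct and is essentially the paper's argument: centre-to-leaf paths of length at most $\diam(T)/2$ cover $E(T)$, and equality forces these paths to be edge-disjoint (no branching away from the centre) and all of length exactly $\diam(T)/2$. The difference is only in presentation. You split by the parity of the diameter and treat the central edge separately in the odd case, obtaining $e(T)\le LR+1$, which makes $L=2$ immediate at equality; the paper instead uses a metric centre that may be an edge midpoint, with half-integer distances. Your parenthetical about $c$ being a leaf is vacuous, since for even $D\ge2$ the centre is the middle vertex of a diametral path.
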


\begin{proof}
Let $c$ be the metric center of $T$, allowing $c$ to be either a vertex or the midpoint of an edge.  Every leaf $\ell$ satisfies
\[
        d(c,\ell)\le \frac{\diam(T)}2.
\]
Therefore
\[
        \sum_{\ell\text{ leaf}} d(c,\ell)
        \le
        \frac{L(T)\diam(T)}2.
\]
On the other hand, every edge of $T$ lies on at least one path from $c$ to a leaf.  Hence
\[
        e(T)\le \sum_{\ell\text{ leaf}} d(c,\ell),
\]
which proves the inequality.

If equality holds, then every leaf has distance exactly $\diam(T)/2$ from $c$, and every edge is counted by exactly one path from $c$ to a leaf.  The second condition forbids branching away from $c$: every component of $T-c$ contains exactly one leaf.  Thus $T$ is a path if the center is an edge midpoint or if the center vertex has degree $2$, and otherwise $T$ is a spider whose arms all have equal length.

Conversely, paths and equal-arm spiders plainly attain equality.
\end{proof}

Now let $T$ be strongly dominating and suppose all internal degrees are even.  The leaf-diameter inequality gives $L/m\ge 2/D$, while Lemma \ref{lem:rademacher} gives $L/m\le 2/D$.  Equality must therefore hold in Lemma \ref{lem:leafdiam}, so $T$ is a path or an equal-arm spider.

\paragraph{Odd paths.}

\begin{lemma}[Odd paths]\label{lem:oddpaths}
If $m\ge3$ is odd, then $P_m$ is not strongly dominating.
\end{lemma}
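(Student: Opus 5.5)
To show that $P_m$ with $m\ge3$ odd is not strongly dominating, the plan is to find a signed kernel that makes the density of the full path vanish while some proper subpath keeps a positive density. The natural tool is the spectral representation from the proof of Proposition~\ref{prop:evenpaths}. For $A=A_W$ and $\nu$ the spectral measure of $A$ at $\one$, one has $t_{P_j}(W)=\int\lambda^j\,d\nu(\lambda)$. When $m$ is odd, $\int\lambda^m\,d\nu$ is a signed moment, so it can be cancelled to zero while the even moment $t_{P_2}(W)=\int\lambda^2\,d\nu$ stays positive. Since $m\ge3$, the path $P_2$ is a subgraph of $P_m$. Thus we would have $p_{P_m}(W)=0<p_{P_2}(W)$, and domination fails.

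The rank-one kernels of Lemma~\ref{lem:rankone} cannot do this. Every internal vertex of $P_m$ has degree $2$, so with $W=f\otimes f$ we get $t_{P_m}(W)=(\E f)^2(\E f^2)^{m-1}$, which vanishes only when $t_{P_2}$ vanishes as well. So I would use a rank-two kernel instead. Take two orthogonal functions $g_1=\one$ and $g_2$ with $\E g_2=0$ and $\E g_2^2=1$ (for example a $\pm1$ step function), and set
\[
W=\lambda_1\,g_1\otimes g_1+\lambda_2\,g_2\otimes g_2 .
\]
Then $A\one=\lambda_1\one$, and the spectral measure at $\one$ is the single atom $\delta_{\lambda_1}$. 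This gives $t_{P_j}=\lambda_1^j$, which is never zero, so this choice is not yet enough.

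The fix is to spread $\one$ across two eigenvectors. Choose orthonormal $e_1,e_2$ with $\one=c_1e_1+c_2e_2$ and $c_1,c_2\ne0$, and put $W=\lambda_1e_1\otimes e_1+\lambda_2e_2\otimes e_2$. Then $\nu=c_1^2\delta_{\lambda_1}+c_2^2\delta_{\lambda_2}$. Concretely, split $[0,1]$ into two halves $I_1,I_2$ and take $e_1=\one$, $e_2=1_{I_1}-1_{I_2}$. Then $\one=\tfrac12(e_1+e_2)+\tfrac12(e_1-e_2)$ suggests working directly with the eigenvectors $u_i=\sqrt2\,1_{I_i}$. These are orthonormal and $\one=\tfrac1{\sqrt2}(u_1+u_2)$. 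Setting $W=\lambda_1u_1\otimes u_1+\lambda_2u_2\otimes u_2$ (a block-diagonal step kernel) gives $\nu=\tfrac12(\delta_{\lambda_1}+\delta_{\lambda_2})$.

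Now choose $\lambda_1=1$ and $\lambda_2=-1$. Because $m$ is odd, $t_{P_m}(W)=\tfrac12(1+(-1)^m)=0$, while $t_{P_2}(W)=1>0$. This kernel is bounded and symmetric; explicitly it equals $2$ on $I_1\times I_1$, $-2$ on $I_2\times I_2$, and $0$ elsewhere. The conclusion follows as above.

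The main point to verify is the formula $t_{P_j}(W)=\langle\one,A^j\one\rangle$, which Proposition~\ref{prop:evenpaths} already records. As a cross-check there is a direct combinatorial description: the kernel confines every homomorphism to one of the two blocks, and the two blocks contribute $(\tfrac12)^{j+1}2^j$ and $(\tfrac12)^{j+1}(-2)^j$ respectively, which cancel exactly when $j$ is odd. I expect no real obstacle in this argument; the only step needing care was seeing that rank-one kernels cannot work for paths, which forces the two-block signed construction.
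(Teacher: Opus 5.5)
Your proof is correct and is essentially the paper's. The paper uses $A=P_u-aP_v$ with eigenvalues $1,-a$ ($a\ne1$) and spectral weights $a^m/(1+a^m)$, $1/(1+a^m)$ at $\one$; your kernel $2\cdot1_{I_1\times I_1}-2\cdot1_{I_2\times I_2}$ is exactly its excluded case $a=1$ (where $u=\sqrt2\,1_{I_1}$, $v=\sqrt2\,1_{I_2}$). Because that symmetric spectrum kills every odd moment, including $t_{K_2}(W)$, you correctly compare against $P_2\subseteq P_m$ (available since $m\ge3$) and get a parameter-free kernel, whereas the paper's tuning with $a\ne1$ cancels only the $m$th moment and keeps $K_2$ as the witness.
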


\begin{proof}
Choose $a>0$, $a\ne1$, and set
\[
        w=\frac{a^m}{1+a^m}.
\]
We construct a bounded self-adjoint finite-rank kernel whose associated operator $A$ has eigenvalues $1$ and $-a$, with spectral weights $w$ and $1-w$ relative to the vector $\one$.

Let $h$ be a bounded mean-zero function with $\|h\|_2=1$, for instance the Rademacher function taking values $1$ and $-1$ on two intervals of measure $1/2$.  Define
\[
        u=\sqrt w\,\one+\sqrt{1-w}\,h,
        \qquad
        v=\sqrt{1-w}\,\one-\sqrt w\,h.
\]
Then $u,v$ are orthonormal and
\[
        \one=\sqrt w\,u+\sqrt{1-w}\,v.
\]
Let
\[
        A=P_u-aP_v,
\]
where $P_u$ and $P_v$ are the rank-one orthogonal projections onto $u$ and $v$.  This operator is represented by a bounded symmetric kernel.

For this kernel,
\[
        t_{P_m}(W)=\langle \one,A^m\one\rangle
        =w+(1-w)(-a)^m.
\]
Since $m$ is odd,
\[
        t_{P_m}(W)=w-(1-w)a^m=0.
\]
But
\[
        t_{K_2}(W)=\langle \one,A\one\rangle
        =w-(1-w)a
        =\frac{a^m-a}{1+a^m}\ne0.
\]
Thus $P_m\not\succeq_s K_2$.
\end{proof}

\paragraph{Equal-arm spiders.}

Let $S_{r,s}$ denote the spider with $r$ arms, each of length $s$.  Thus $S_{2,s}=P_{2s}$ and $S_{r,1}=K_{1,r}$.

\begin{lemma}[Non-star spiders]\label{lem:spiders}
Let $r\ge4$ be even and $s\ge2$.  Then $S_{r,s}$ is not strongly dominating.
\end{lemma}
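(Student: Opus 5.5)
The plan is to apply the maximum-degree cover obstruction, Corollary~\ref{cor:maxdegreecover}, which is itself a consequence of the additive two-atom certificate in Theorem~\ref{thm:vertexcover}. No new kernel should be needed. The point is that in $S_{r,s}$ with $r\ge4$ the only vertex of maximum degree is the centre, while the arms have length $s\ge2$. Hence some edge lies entirely away from the centre, and the centre alone cannot cover it.

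The degree count runs as follows. The centre $c$ has degree $r$, so $\Delta(S_{r,s})=r\ge4$. Every other vertex is either an internal arm vertex of degree $2$ or a leaf of degree $1$. Thus $V_\Delta(S_{r,s})=\{c\}$. Fix one arm $c=w_0,w_1,\ldots,w_s$. Since $s\ge2$, the final edge $w_{s-1}w_s$ does not meet $c$. Its endpoints have degrees $2$ and $1$, both strictly smaller than $r$, so $V_\Delta$ fails to be a vertex cover. Corollary~\ref{cor:maxdegreecover} then shows that $S_{r,s}$ is not strongly dominating.

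As a consistency check, the same conclusion can be read off directly from Theorem~\ref{thm:vertexcover}. We have $e(S_{r,s})/\Delta=rs/r=s$. A vertex cover must cover the $r$ vertex-disjoint final edges $w_{s-1}w_s$, one in each arm. Together with the centre edges, this forces $\tau(S_{r,s})>s$ when $s\ge2$; for instance, when $s=2$ the minimum cover is the $r$ middle vertices, so $\tau=r>2$. Then the explicit witness is $W_M(x,y)=\phi_M(x)+\phi_M(y)$ for large $M$, with the comparison subgraph $K_{1,r}$.

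I expect no step to be a genuine obstacle, since the earlier obstruction does all the analytic work. The only care needed is the bookkeeping at $s=2$, where the uncovered edge joins a degree-$2$ vertex to a leaf rather than two degree-$2$ vertices; this case is still handled because both degrees are below $\Delta=r$. Evenness of $r$ is not used here. It only reflects that odd $r$ was already excluded by Lemma~\ref{lem:odddegree}.
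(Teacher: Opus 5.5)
Your proof is correct, but it takes a different route from the paper. You obtain the lemma as a direct instance of the maximum-degree cover obstruction, Corollary~\ref{cor:maxdegreecover}. That corollary is proved before the tree section, so there is no circularity. The centre is the unique vertex of degree $r$, with $r>2$ (only $r\ge3$ is needed), and the edge $w_{s-1}w_s$ avoids it.

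The paper uses the same additive kernel $W_M(x,y)=\phi_M(x)+\phi_M(y)$ but does not invoke the general expansion bound. It computes the spider density exactly from the operator identities $A\one=\phi$ and $A\phi=\one$. This gives $t_{S_{r,s}}(W_M)=1$ for even $s$ and $t_{S_{r,s}}(W_M)=\E\phi^r\sim M^{r-2}$ for odd $s$. The paper then compares with the fixed subgraph $K_{1,3}$, where $p_{K_{1,3}}\sim M^{1/3}$ beats $M^{(r-2)/(rs)}$.

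What your route buys: it is shorter, it needs only $r\ge3$, and it shows the spider case is already subsumed by Theorem~\ref{thm:vertexcover}. What the paper's route buys: it is self-contained and gives a closed-form density instead of the asymptotic $O(M^{e-2\tau})$ bound from summing the $2^{e}$ expansion terms. It also yields a witness, $K_{1,3}$, whose size does not grow with $r$.

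One small imprecision in your consistency check. The $r$ disjoint final edges alone give only $\tau\ge r$, which need not exceed $s$ when $s\ge r$. A correct bound uses the $r$ disjoint paths $w_1\cdots w_s$, each needing $\lceil (s-1)/2\rceil$ cover vertices. This gives $\tau\ge r\lceil (s-1)/2\rceil>s$ for $r\ge4$ and $s\ge2$. Your main argument does not rely on this.
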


\begin{proof}
Use the two-point random variable $\phi=\phi_M$ from the additive maximum-degree obstruction and the kernel
\[
        W(x,y)=\phi(x)+\phi(y).
\]
Let $A=A_W$.  Since $\E\phi=0$ and $\E\phi^2=1$,
\[
        A\one=\phi,
        \qquad
        A\phi=\one.
\]
Therefore
\[
        A^s\one=
        \begin{cases}
        \one,&s\text{ even},\\
        \phi,&s\text{ odd}.
        \end{cases}
\]
Rooting the spider at its center gives
\[
        t_{S_{r,s}}(W)=\int (A^s\one(x))^r\,dx.
\]
The spider contains $K_{1,3}$ as a subgraph, and
\[
        t_{K_{1,3}}(W)=\int (A\one)^3=\E\phi^3\sim M.
\]
If $s$ is even, then $t_{S_{r,s}}(W)=1$, while $p_{K_{1,3}}(W)\sim M^{1/3}$, so domination fails.

If $s$ is odd and $s\ge3$, then
\[
        t_{S_{r,s}}(W)=\E\phi^r\sim M^{r-2}.
\]
Consequently
\[
        p_{S_{r,s}}(W)\sim M^{(r-2)/(rs)},
        \qquad
        p_{K_{1,3}}(W)\sim M^{1/3}.
\]
Since $r\ge4$ and $s\ge3$,
\[
        \frac{r-2}{rs}<\frac13.
\]
Thus $p_{K_{1,3}}(W)>p_{S_{r,s}}(W)$ for all sufficiently large $M$.
\end{proof}

\begin{proof}[Proof of Theorem \ref{thm:trees}]
The positive cases were proved in Propositions~\ref{prop:evenpaths} and~\ref{prop:evenstars}.  Conversely, let $T$ be strongly dominating.  By Lemma \ref{lem:odddegree}, all internal degrees of $T$ are even.  The equality argument following Lemma \ref{lem:leafdiam} shows that $T$ is a path or an equal-arm spider.  Lemma \ref{lem:oddpaths} eliminates odd paths of length at least $3$.  Lemma \ref{lem:spiders} eliminates equal-arm spiders with at least four arms and arm length at least two.  Odd stars are eliminated by Lemma \ref{lem:odddegree}.  The only remaining trees are $K_2$, even paths, and even stars.
\end{proof}

\subsection{Graphs with bridges}

\begin{theorem}[Bridge theorem]\label{thm:bridge}
Let $H$ be a connected graph with at least one edge and a bridge.  Then $H$ is strongly dominating if and only if
\[
        H\cong K_2,
        \qquad H\cong P_{2s},
        \qquad\text{or}\qquad H\cong K_{1,2s}
\]
for some $s\ge1$.
\end{theorem}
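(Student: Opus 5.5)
The reverse implication is already available: $K_2$, $P_{2s}$ and $K_{1,2s}$ are strongly dominating by Propositions~\ref{prop:evenpaths} and~\ref{prop:evenstars}, and each of them has a bridge. For the forward implication, let $H$ be connected and strongly dominating with a bridge $e=uv$. The plan is to show that $H$ is a tree and then apply Theorem~\ref{thm:trees}.

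Suppose $H$ contains a cycle. Then we want a leaf, so that Lemma~\ref{lem:leafcycle} applies. A bridge does not directly provide one. Instead, note that Lemma~\ref{lem:odddegree} forces every degree to be $1$ or even.

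Consider the component $H_u$ of $H-e$ containing $u$. The degree sum of $H_u$ is even. Every vertex other than $u$ has the same degree in $H_u$ as in $H$. The vertex $u$ loses exactly one from its degree.

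Assume first that $H$ has no leaves, so every degree of $H$ is even. Then $H_u$ has exactly one odd-degree vertex, namely $u$. This contradicts the handshake lemma, so $H$ must have a leaf. Together with the cycle, Lemma~\ref{lem:leafcycle} then shows $H$ is not strongly dominating, a contradiction.

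Hence $H$ is acyclic, i.e.\ a tree. Theorem~\ref{thm:trees} then yields $H\cong K_2$, $P_{2s}$ or $K_{1,2s}$.

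The only delicate point is the parity step. The claim needed is that a bridge in a graph whose degrees are all $1$ or even forces a leaf somewhere. The handshake argument above gives exactly this: if no vertex had degree $1$, the side $H_u$ would contain a single odd-degree vertex. We do not need to locate the leaf; its existence is enough for Lemma~\ref{lem:leafcycle}.
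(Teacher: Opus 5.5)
Your proof is correct and is essentially the paper's argument. The paper's Lemma~\ref{lem:bridgeleaves} uses the same handshake parity on one side of the bridge, together with Lemma~\ref{lem:odddegree}, to produce a leaf; it phrases this directly rather than by contradiction and obtains a leaf on each side. Lemma~\ref{lem:leafcycle} and Theorem~\ref{thm:trees} then finish exactly as in your proposal.
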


\begin{lemma}[A bridge forces leaves]\label{lem:bridgeleaves}
Let $H$ be a connected strongly dominating graph.  If $H$ has a bridge, then $H$ has a leaf on each side of that bridge.
\end{lemma}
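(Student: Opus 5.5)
The plan is to argue by contradiction, using the leaf-cycle obstruction (Lemma~\ref{lem:leafcycle}) together with the fact that strong domination passes to subgraphs in a controlled way. Let $e=xy$ be a bridge of the connected strongly dominating graph $H$, and let $H_x$ and $H_y$ be the components of $H-e$ containing $x$ and $y$.

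First I show that $H$ is a tree. If $H$ contained a cycle, that cycle would lie inside $H_x$ or $H_y$. Lemma~\ref{lem:leafcycle} says a strongly dominating graph with a cycle has no leaf, so every vertex of $H$ would have degree at least $2$. I then look at the side not known to contain a cycle, say $H_y$, and take a longest path in $H_y$ starting at $y$. If $H_y$ is a tree with at least one edge, the far endpoint of this path is a leaf of $H_y$ other than $y$, and hence a leaf of $H$. If $H_y=\{y\}$, then $y$ itself is a leaf of $H$. Either way this contradicts Lemma~\ref{lem:leafcycle}. So $H_y$ also contains a cycle, and by symmetry both sides do. In that case Lemma~\ref{lem:leafcycle} does not apply directly, and this is the step I expect to be the main obstacle.

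For this case I plan to use the zero-mean rank-one kernel $W=f\otimes f$ with $\E f=0$, as in Lemma~\ref{lem:leafcycle}. Lemma~\ref{lem:rankone} gives
\[
t_H(W)=\prod_v \E f^{d(v)},
\]
which is non-zero whenever all degrees are at least $2$, so the leaf argument does not kill $t_H(W)$. Instead I compare $H$ with the subgraph $F=H-e$. I would choose $f$ so that $\E f^{d(x)}$ is much smaller than $\E f^{d(x)-1}$ (taking a zero-mean $f$ from a parametrized family), so that $p_{H-e}(W)>p_H(W)$. Since $d(x)-1\ge 2$, the factor $\E f^{d(x)-1}$ is not forced to vanish, which leaves room for this choice. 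If that cannot be balanced, the fallback is the additive two-atom kernel of Theorem~\ref{thm:vertexcover}, combined with Corollary~\ref{cor:equalitycover}: every edge has exactly one endpoint in a cover made of maximum-degree vertices, and a bridge joining two cyclic sides should violate this through a counting argument on each side.

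Once a cycle is excluded, $H$ is a tree with at least one edge. For the bridge lemma, a tree has a leaf on each side of any bridge: each side is either a single vertex, which is then a leaf of $H$, or a subtree containing a leaf of $H$ other than its attachment vertex.
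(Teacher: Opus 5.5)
Your proposal has a genuine gap in exactly the case you flag: $H$ leafless, with cycles on both sides of the bridge $e=xy$. The step where only one side is cyclic and the final tree step are fine. The leafless case, however, is never carried out, and neither suggested repair works as stated.

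\begin{itemize}
\item \textbf{The comparison with $H-e$.} The assertion $d(x)-1\ge2$ is unjustified. Moreover, the comparison collapses whenever an endpoint of the bridge has degree $2$: by Lemma~\ref{lem:rankone}, $t_{H-e}(f\otimes f)$ then contains the factor $\E f=0$.
\item \textbf{The fallback through Corollary~\ref{cor:equalitycover}.} This fails as well. Take two copies of $K_{3,3}$ with one edge deleted, and join one endpoint of the deleted edge in each copy to a new vertex $y$.
\begin{itemize}
\item The result is bipartite and leafless, and both edges at $y$ are bridges with cycles on both sides.
\item It has $e(H)=18$ and $\Delta(H)=3$.
\item In each copy, the three vertices in the part of the attachment vertex have degree $3$. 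These six vertices form an independent vertex cover, so $\tau(H)=e(H)/\Delta(H)$ and the maximum-degree test is passed.
\item Since $d(y)=2$, the $H-e$ comparison is also dead for this graph.
\end{itemize}
\end{itemize}

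What rules this graph out is its odd degrees, and that is the missing idea. Let $A$ be the vertex set of one component of $H-e$. Exactly one edge leaves $A$, so $\sum_{v\in A}d_H(v)=2e(H[A])+1$ is odd, and $A$ contains a vertex of odd degree in $H$. By Lemma~\ref{lem:odddegree}, this degree must be $1$, so the vertex is a leaf of $H$. The same argument applies to the other side.

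This is the paper's entire proof. It needs no case distinction and no prior reduction to trees. The order is the reverse of yours: the tree conclusion is deduced afterwards, in Theorem~\ref{thm:bridge}, from this lemma together with Lemma~\ref{lem:leafcycle}. The parity argument also shows your problematic case is vacuous. A leafless strongly dominating graph has only even degrees by Lemma~\ref{lem:odddegree}, and a graph in which every degree is even has no bridge.
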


\begin{proof}
Let $e=uv$ be a bridge.  Removing $e$ gives two components with vertex sets $A$ and $B$.  In $H$,
\[
        \sum_{x\in A} d_H(x)=2e(H[A])+1,
\]
because exactly one edge leaves $A$.  Thus the degree sum over $A$ is odd, so $A$ contains a vertex of odd degree in $H$.  By Lemma \ref{lem:odddegree}, every odd degree in $H$ is $1$.  Hence $A$ contains a leaf.  The same argument applies to $B$.
\end{proof}

\begin{lemma}[Bridge parity under leaf-only odd degrees]\label{lem:bridgeparity}
Let $H$ be a connected graph in which every odd-degree vertex is a leaf.  If $e$ is a bridge of $H$, then each component of $H-e$ contains a leaf of $H$.
\end{lemma}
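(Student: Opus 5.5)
The plan is to rerun the parity argument from Lemma~\ref{lem:bridgeleaves}, this time with the leaf-only hypothesis in place of strong domination. Write $e=uv$, and let $A$ and $B$ be the vertex sets of the two components of $H-e$, with $u\in A$ and $v\in B$. These are the only two components, because $H$ is connected and $e$ is a bridge.

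The first step is the handshake count inside $A$. Every edge of $H$ with both endpoints in $A$ contributes $2$ to $\sum_{x\in A}d_H(x)$, and the bridge $e$ is the unique edge leaving $A$, contributing exactly $1$. Hence
\[
        \sum_{x\in A}d_H(x)=2e(H[A])+1
\]
is odd. So $A$ contains a vertex $x$ whose degree $d_H(x)$ is odd.

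By hypothesis every odd-degree vertex of $H$ is a leaf, so $d_H(x)=1$ and $x$ is a leaf of $H$ lying in the component on the $A$ side. The same computation with $B$ in place of $A$ produces a leaf of $H$ in the other component.

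One point needs a short check: the leaf must be a leaf of $H$, and it may be $u$ itself. This happens exactly when $A=\{u\}$, in which case $u$ has degree $1$ in $H$, and the conclusion is still correct as stated. There is no real obstacle; this step is just careful bookkeeping about which graph the degrees are measured in, namely $d_H$ throughout.
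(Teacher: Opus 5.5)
Your proposal is correct and follows the paper's proof: the degree sum over one side of the bridge is $2e(H[A])+1$, hence odd, which forces an odd-degree vertex on that side, and the hypothesis makes that vertex a leaf of $H$; the same argument applies to the other side. Your remark that the leaf may be the bridge endpoint itself, with all degrees measured in $H$, is a harmless clarification that the paper leaves implicit.
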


\begin{proof}
Let $A$ and $B$ be the vertex sets of the two components of $H-e$.  Since exactly one edge leaves $A$,
\[
        \sum_{x\in A} d_H(x)=2e(H[A])+1,
\]
so $A$ contains a vertex of odd degree in $H$.  By hypothesis this vertex is a leaf.  The same argument applies to $B$.
\end{proof}

\begin{proof}[Proof of Theorem \ref{thm:bridge}]
The graphs $K_2$, $P_{2s}$, and $K_{1,2s}$ are strongly dominating by Propositions~\ref{prop:evenpaths} and~\ref{prop:evenstars}.  Conversely, let $H$ be connected, strongly dominating, and suppose $H$ has a bridge.  By Lemma \ref{lem:bridgeleaves}, $H$ has a leaf.  If $H$ contained a cycle, Lemma \ref{lem:leafcycle} would show that $H$ is not strongly dominating.  Therefore $H$ is a tree.  Theorem \ref{thm:trees} completes the proof.
\end{proof}

\begin{corollary}[Core reduction]\label{cor:core}
Every connected strongly dominating graph is either
\[
        K_2,
        \qquad P_{2s},
        \qquad K_{1,2s},
\]
or it is bridgeless, bipartite, and all its vertex degrees are even.
\end{corollary}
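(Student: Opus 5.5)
The corollary follows by combining the lemmas just proved with the bridge theorem. Let $H$ be a connected strongly dominating graph. If $H$ has a bridge, Theorem~\ref{thm:bridge} says $H$ is one of $K_2$, $P_{2s}$, $K_{1,2s}$, and we are done. So assume $H$ is bridgeless. Lemma~\ref{lem:bipartite} then gives that $H$ is bipartite. It remains to show that every degree is even.

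By Lemma~\ref{lem:odddegree}, every vertex degree of $H$ is $1$ or even, so it suffices to exclude leaves. I would argue this directly. Suppose $H$ has a leaf $u$ with neighbour $v$. The edge $uv$ is a bridge, since deleting it isolates $u$. This contradicts bridgelessness, except when $H\cong K_2$, which is already on the list. (One could also use Lemma~\ref{lem:leafcycle} if $H$ contains a cycle, but the bridge argument is simpler.) Hence every degree is even.

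Degree-$0$ vertices cannot occur, because $H$ is connected with at least one edge. No step is a real obstacle; the only care needed is the degenerate case $K_2$, where the single edge is itself a bridge. That case is handled by Theorem~\ref{thm:bridge}, or equivalently by listing $K_2$ explicitly.
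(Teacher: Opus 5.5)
Your proof is correct and matches the paper's argument: the bridge case is handled by Theorem~\ref{thm:bridge}, and in the bridgeless case Lemma~\ref{lem:bipartite} gives bipartiteness, while Lemma~\ref{lem:odddegree} together with the observation that a leaf edge is a bridge gives even degrees. Your exception for $K_2$ in the leaf step is vacuous, since $K_2$ has a bridge and so never reaches the bridgeless branch.
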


\begin{proof}
By Lemma \ref{lem:bipartite}, the graph is bipartite.  By Lemma \ref{lem:odddegree}, all vertex degrees are $1$ or even.  If the graph has a bridge, Theorem \ref{thm:bridge} applies.  If it has no bridge, then it has no leaf unless it is $K_2$, and therefore all degrees are even.
\end{proof}

\subsection{Outerplanar graphs}\label{sec:outerplanar}

The bridge and vertex-cover obstructions extend to all outerplanar graphs once the terminal-cycle argument is replaced by an outer-face ear.

\begin{lemma}[Outerplanar degree-$2$ ear]\label{lem:outerplanar-ear}
Let $B$ be a $2$-connected bipartite outerplanar graph which is not a cycle.  For every prescribed vertex $x\in V(B)$, there is an edge $uv\in E(B)$ such that
\[
        u,v\ne x,
        \qquad
        d_B(u)=d_B(v)=2.
\]
\end{lemma}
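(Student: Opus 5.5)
I would use the weak dual of the outerplanar embedding. Since $B$ is $2$-connected and outerplanar, it has a Hamiltonian outer cycle $C$, and every other edge is a chord of $C$. Because $B$ is not a cycle, there is at least one chord. The chords cut the interior of $C$ into bounded faces. The weak dual $T$ has one vertex per bounded face and an edge for each chord separating two faces, and it is a tree. Since there is at least one chord, $T$ has at least two vertices and hence at least two leaves. A leaf face $\Phi$ of $T$ is bounded by exactly one chord $c=yz$ together with a path $Q$ of the outer cycle from $y$ to $z$. Every interior vertex of $Q$ is incident with no chord, so it has degree $2$ in $B$.

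Next I use bipartiteness. The face $\Phi$ is a cycle of $B$, so it is even. It has length $e(Q)+1$, so $e(Q)$ is odd. Since $B$ is simple and $c$ is a chord, $Q$ is not a single edge, so $e(Q)\ge3$. Hence $Q$ has at least two interior vertices, all of degree $2$. Any two consecutive interior vertices of $Q$ give an edge $uv$ with $d_B(u)=d_B(v)=2$. The two interior edges of $Q$ adjacent to the two middle interior vertices give candidate edges. More usefully, if $e(Q)\ge3$, then $Q$ has $e(Q)-2\ge1$ edges joining two interior vertices.

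It remains to avoid the prescribed vertex $x$. I would take two distinct leaves $\Phi_1,\Phi_2$ of $T$. Their outer paths $Q_1,Q_2$ have disjoint interiors, because each interior vertex of a leaf path lies on exactly one bounded face. So $x$ is an interior vertex of at most one of them. Choose a leaf $\Phi$ whose outer path does not contain $x$ in its interior, and take any edge joining two interior vertices of its path. Both endpoints then differ from $x$ and have degree $2$. If instead $x$ is an endpoint of the chord, the choice still works, since the edge lies strictly inside $Q$.

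The main obstacle is the parity step showing $e(Q)\ge3$. This is the only place bipartiteness enters. Without it, a leaf face could be a triangle with a single degree-$2$ vertex and no edge of the required kind. I would state carefully that simplicity rules out $e(Q)=1$ and that evenness of the face cycle rules out $e(Q)=2$. The existence of two leaves of the weak dual tree is standard, and I would cite it as a property of outerplanar embeddings.
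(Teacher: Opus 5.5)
Your proposal is correct and follows essentially the same route as the paper's proof. Both arguments take two leaf faces of the weak dual tree, use bipartiteness and simplicity to force each leaf-face outer path to have odd length at least $3$, and pick the leaf whose path interior avoids $x$, using that these interiors are disjoint.
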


\begin{proof}
Fix an outerplane embedding of $B$.  A $2$-connected outerplanar graph has a Hamilton cycle bounding the outer face, and the weak dual of its bounded faces is a tree; see, for example, \cite{Diestel}.  Since $B$ is not a cycle, it has a chord, so the weak dual has at least two leaves.

A leaf face is bounded by one chord and an outer-boundary path $P$.  Every internal vertex of $P$ has degree $2$ in $B$, since any further chord at such a vertex would subdivide the face.  The facial cycle is even because $B$ is bipartite.  Hence $P$ has odd length.  It has length at least $3$, because the chord is not an outer edge, so $P$ contains an edge whose two endpoints are internal vertices and therefore have degree $2$.

A vertex internal to such a path has degree $2$ and is incident with only one bounded face, so the prescribed vertex $x$ can be internal to at most one leaf-face path.  Choose another leaf face.  An edge between two internal vertices of its path avoids $x$ and has the required degrees.
\end{proof}

\begin{proof}[Proof of Theorem \ref{thm:outerplanar}]
The four displayed families are strongly dominating by Propositions \ref{prop:evenpaths}, \ref{prop:evenstars}, and \ref{prop:evencycles}.

Conversely, let $H$ be a connected strongly dominating outerplanar graph.  If $H$ has a bridge, Theorem \ref{thm:bridge} gives $K_2$, an even path, or an even star.  We may therefore assume that $H$ is bridgeless.  By Corollary \ref{cor:core}, $H$ is bipartite and every vertex has even degree.

Suppose first that $H$ is $2$-connected.  If it is not a cycle, Lemma \ref{lem:outerplanar-ear} gives an edge $uv$ with $d_H(u)=d_H(v)=2$.  Since $H$ has a chord, some vertex has degree at least $3$ and hence, by evenness, $\Delta(H)\ge4$.  Thus neither endpoint of $uv$ has maximum degree, contradicting Corollary \ref{cor:maxdegreecover}.  Hence $H$ is a cycle, and bipartiteness makes it an even cycle.

It remains to exclude a bridgeless graph with a cut vertex.  Choose a terminal block $B$ with cut vertex $x$.  Since $H$ has no bridges, $B$ is $2$-connected.  If $B$ is a cycle, it has an edge $uv$ avoiding $x$ whose endpoints have degree $2$ in $B$.  If $B$ is not a cycle, Lemma \ref{lem:outerplanar-ear} gives such an edge.  Because $B$ is terminal, $u$ and $v$ have no neighbours outside $B$, so
\[
        d_H(u)=d_H(v)=2.
\]
The cut vertex $x$ belongs to at least two non-trivial blocks, each contributing at least two incident edges, and therefore $\Delta(H)\ge d_H(x)\ge4$.  Corollary \ref{cor:terminalblock} now excludes $H$.  This completes the classification.
\end{proof}

\subsubsection{Two-atom witnesses}

The arguments above are constructive.  Throughout the outerplanar classification, failure of strong domination is detected by a two-atom kernel.  We record this as a final theorem.

\begin{theorem}[Two-atom witness theorem]\label{thm:witness}
Let $H$ be a connected outerplanar graph with at least one edge.  If $H$ is not one of
\[
        K_2,
        \qquad P_{2s},
        \qquad K_{1,2s},
        \qquad C_{2s},
\]
then there exists a non-empty subgraph $F\subseteq H$ and a bounded symmetric two-atom real kernel $W$ such that
\[
        p_F(W)>p_H(W).
\]
Moreover, one may choose $F$ among the following types:
\[
        K_2,
        \qquad C_\ell,
        \qquad P_D,
        \qquad K_{1,3},
        \qquad K_{1,\Delta(H)}.
\]
\end{theorem}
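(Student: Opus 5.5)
The proof goes back through the proof of Theorem~\ref{thm:outerplanar} and records, at each exclusion, the subgraph $F$ and the kernel $W$ used. Two things must then be checked for every witness. First, the kernel takes values on a two-point partition of $[0,1]$: it is constant on the four rectangles $E_i\times E_j$ for some measurable partition $[0,1]=E_1\sqcup E_2$. Second, the inequality is strict, $p_F(W)>p_H(W)$. The case analysis is forced by the order of the outerplanar argument: first bipartiteness and degree parity, then bridges (which leads to trees), then bridgeless $2$-connected graphs, and finally bridgeless graphs with a cut vertex.

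\textbf{The obstructions one by one.}
\begin{enumerate}[label=\textup{(\alph*)}]
\item \emph{$H$ not bipartite.} Lemma~\ref{lem:bipartite} uses $W=1_{A\times B}+1_{B\times A}$. I would take $B=[0,1]\setminus A$, which makes $W$ two-atom. Then $t_H(W)=0<t_{K_2}(W)$, so the witness is $F=K_2$.
\item \emph{A vertex of odd degree $d\ge3$.} Lemma~\ref{lem:odddegree} uses the rank-one kernel $f\otimes f$, where $f$ is two-valued; this is two-atom, and again $F=K_2$.
\item \emph{A bridge and a cycle.} Lemma~\ref{lem:bridgeleaves} supplies a leaf. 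Lemma~\ref{lem:leafcycle} then applies with any mean-zero two-valued $f$ (for instance a Rademacher function), giving $F=C_\ell$.
\item \emph{Trees, via Theorem~\ref{thm:trees}.}
  \begin{itemize}
  \item Odd internal degrees fall under (b).
  \item A non-spider, non-path tree with even internal degrees has strict inequality in Lemma~\ref{lem:leafdiam}. Lemma~\ref{lem:rademacher} then fails strictly for small $\varepsilon$, with the two-valued $f$ and $F=P_D$.
  \item Odd paths are handled by Lemma~\ref{lem:oddpaths}, with $F=K_2$. This is the one case needing an extra remark. The vectors $u,v$ there are combinations of $\one$ and the Rademacher function $h$, so the kernel of $A=P_u-aP_v$ is constant on products of the two level sets of $h$. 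Hence it is two-atom.
  \item Non-star even spiders are handled by Lemma~\ref{lem:spiders}, using the additive kernel $\phi_M(x)+\phi_M(y)$ with the two-valued $\phi_M$, and $F=K_{1,3}$.
  \end{itemize}
\item \emph{Bridgeless and not a cycle.} This covers both the $2$-connected non-cycle case and the cut-vertex case. Both are concluded through Corollaries~\ref{cor:maxdegreecover} and~\ref{cor:terminalblock}, and hence through Theorem~\ref{thm:vertexcover}. That theorem gives the strict inequality $p_{K_{1,\Delta}}(W_M)>p_H(W_M)$ for the two-atom additive kernel $W_M$ and large $M$, so $F=K_{1,\Delta(H)}$. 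In the $2$-connected case I must also confirm the strict hypothesis $\tau(H)>e(H)/\Delta$. The proof of Corollary~\ref{cor:maxdegreecover} yields exactly this, as its contrapositive: if $\tau(H)=e(H)/\Delta$, then the maximum-degree vertices would cover every edge.
\end{enumerate}

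\textbf{Completeness and the main risk.} The remaining check is that every graph outside the four listed families reaches one of the cases above. This follows from Corollary~\ref{cor:core} and the case split in the proof of Theorem~\ref{thm:outerplanar}. Odd cycles are non-bipartite and fall under (a), and even cycles are in the list. The main obstacle is bookkeeping rather than analysis. One must make sure that every cited lemma was proved by exhibiting a \emph{strict} failure for a specific two-atom kernel, and not by a limiting or contrapositive argument that only yields some unspecified kernel. The two places that need care are the odd-path kernel in (d) and the reduction from Corollary~\ref{cor:terminalblock} back to Theorem~\ref{thm:vertexcover} in (e). I would spell out both explicitly.
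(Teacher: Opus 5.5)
Your proposal is correct and follows essentially the same route as the paper: it traces the outerplanar classification case by case, records the subgraph and two-atom kernel at each exclusion, and checks strictness. Your explicit checks of the odd-path kernel and of the choice $B=[0,1]\setminus A$ are a little more careful than the paper's. Two wording fixes remain. In (c), cite the parity-only Lemma~\ref{lem:bridgeparity} instead of Lemma~\ref{lem:bridgeleaves}, which presupposes strong domination. In (d), the $P_D$ witness should cover every even-internal-degree tree that is neither a path nor an equal-arm spider, including unequal-arm spiders, and it fails strictly for every $\varepsilon\in(0,1)$.
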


\begin{proof}
We trace through the proof of the classification.

If $H$ is non-bipartite, a two-step complete bipartite graphon gives $t_H(W)=0$ and $t_{K_2}(W)>0$.

If $H$ has an odd-degree vertex of degree at least $3$, the two-point rank-one kernel from Lemma \ref{lem:odddegree} gives $t_H(W)=0$ and $t_{K_2}(W)>0$.

If $H$ has both a bridge and a cycle, then either the preceding odd-degree case applies, or every odd-degree vertex of $H$ is a leaf.  In the latter case, Lemma \ref{lem:bridgeparity} gives a leaf of $H$, and Lemma \ref{lem:leafcycle} gives a rank-one two-point kernel with $t_H(W)=0$ and $t_C(W)>0$ for some cycle $C\subseteq H$.

If $H$ is a tree not surviving the classification, the proof of Theorem \ref{thm:trees} gives one of the following witnesses: a rank-one moment witness against $K_2$ for an odd internal degree; a biased Rademacher rank-one witness against a diameter path $P_D$; the two-eigenvalue path witness against $K_2$ for an odd path; or the additive two-point witness against $K_{1,3}$ for a non-star equal-arm spider.

Finally, suppose that $H$ is bridgeless and is not an even cycle.  The proof of Theorem \ref{thm:outerplanar} produces an edge whose two endpoints have degree $2<\Delta(H)$.  Hence the maximum-degree vertices do not form a vertex cover.  Equivalently,
\[
        \tau(H)>\frac{e(H)}{\Delta(H)}.
\]
The additive two-point kernel in the proof of Theorem \ref{thm:vertexcover}, compared with the maximum-degree star $K_{1,\Delta(H)}\subseteq H$, yields
\[
        p_{K_{1,\Delta(H)}}(W)>p_H(W)
\]
for all sufficiently large values of the parameter $M$.

In all cases the witness is supported on two atoms.  Scaling puts the kernel in any prescribed bounded interval without changing the strict normalized violation.
\end{proof}

\begin{remark}
The two-atom statement records a finite-dimensional form of every obstruction used above.  Step kernels are the finite weighted models underlying graphon approximation and dense graph limits \cite{LovaszSzegedyLimits,BorgsChayesLovaszSosVesztergombi,LovaszBook}.  Thus, throughout the outerplanar class the obstruction to strong domination is finite-dimensional: no genuinely infinite-dimensional graphon or operator is needed to witness failure.  The signed cancellations already appear in the smallest possible probability spaces.
\end{remark}

\subsection{Outerplanar cyclic sources}\label{sec:outerplanar-sources}

Theorem~\ref{thm:exact-cyclic-source} turns the source problem into a local spectral condition.  In the outerplanar class that condition has a finite combinatorial description.  The key is a two-terminal strengthening of Lemma~\ref{lem:outerplanar-ear}.

\begin{lemma}[Two-terminal outerplanar ear]\label{lem:two-terminal-outerplanar-ear}
Let $Q$ be a $2$-connected bipartite outerplanar graph which is not a cycle, and let $X\subseteq V(Q)$ with $|X|\le2$.  If every vertex in $V(Q)\setminus X$ has even degree in $Q$, then there is an edge $uv\in E(Q)$ such that
\[
        u,v\notin X,
        \qquad
        d_Q(u)=d_Q(v)=2.
\]
\end{lemma}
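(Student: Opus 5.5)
We would follow the proof of Lemma~\ref{lem:outerplanar-ear} and use the parity hypothesis only to handle the case in which $X$ blocks the leaf faces. Fix an outerplane embedding with outer Hamilton cycle $C$. Every vertex $v$ then satisfies $d_Q(v)=2+c(v)$, where $c(v)$ is the number of chords at $v$. Let $\mathcal T$ be the weak dual tree. Since $Q$ is not a cycle, $\mathcal T$ has at least two vertices and hence at least two leaves.

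\emph{Leaf faces.} As in Lemma~\ref{lem:outerplanar-ear}, a leaf face $F$ is bounded by a chord $e_F$ and an outer path $P_F$. The path $P_F$ has odd length at least $3$, and all its internal vertices have degree $2$. Such an internal vertex lies on exactly one bounded face, so it is internal to at most one leaf path. Moreover, it is never an endpoint of a chord. Call $F$ \emph{blocked} if every edge of $P_F$ joining two internal vertices meets $X$. A blocked face therefore has an internal vertex of $P_F$ in $X$, and distinct blocked faces use distinct vertices of $X$. If some leaf face is unblocked, we are done. So assume that every leaf face is blocked. Because $|X|\le2$, $\mathcal T$ has exactly two leaves, so it is a path $F_1,\dots,F_k$ with $k\ge2$ and consecutive separating chords $e_1,\dots,e_{k-1}$. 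Both vertices of $X$ are then internal to $P_{F_1}$ and $P_{F_k}$, one each, and in particular neither is a chord endpoint.

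\emph{Parity step.} Next we show that $e_1$ has an endpoint of odd degree. Write $e_1=c_1c_2$. The chords at a vertex $c$ appear consecutively in the rotation around $c$, and consecutive chords at $c$ bound a common face. Since $F_1$ lies on one side of $e_1$ and is a leaf face, any further chord at $c_i$ must bound $F_2$ together with $e_1$. In the path case, $F_2$ contains only the chords $e_1$ and $e_2$, so that chord is $e_2$. If both $c_1$ and $c_2$ carried a further chord, then $e_2$ would contain both of them, giving $e_2=e_1$, which is impossible. (When $k=2$ the face $F_2$ is itself a leaf face and $c(c_1)=c(c_2)=1$.) Hence some endpoint of $e_1$ has $c=1$ and odd degree $3$, so it lies in $X$. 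But we showed that no vertex of $X$ is a chord endpoint. This contradiction shows that some leaf face is unblocked, and its interior edge is the required edge $uv$.

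\emph{Main obstacle.} The delicate step is the rotation claim that a second chord at an endpoint of $e_1$ must be $e_2$. This needs a careful appeal to the local structure of the embedding: the faces around a vertex are linearly ordered between its two outer edges, and the weak dual is a path. I expect to spend the most care there. The rest of the argument is counting of the same kind as in Lemma~\ref{lem:outerplanar-ear}.
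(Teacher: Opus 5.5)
Your proposal is correct and is essentially the paper's proof. The blocked-leaf-face count forces the weak dual to be a path with one vertex of $X$ internal to each end path, so no vertex of $X$ is a chord endpoint; the rotation argument at $e_1$ then shows that some endpoint is not incident with the unique other chord $e_2$ of $F_2$, hence has degree $3$, which is exactly the paper's end-chord claim (only the chord adjacent to $e_1$ in the rotation at $c_i$ must bound $F_2$, not every further chord, but that is all your argument uses).
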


\begin{figure}[t]
\centering
\begin{tikzpicture}[scale=0.88, every node/.style={font=\small}]
  \begin{scope}[xshift=-3.7cm]
    \node[circle,draw,inner sep=2pt] (f1) at (0,0) {$F$};
    \node[circle,draw,inner sep=2pt] (f2) at (1.2,0) {$F'$};
    \node (dots) at (2.35,0) {$\cdots$};
    \node[circle,draw,inner sep=2pt] (fk) at (3.5,0) {$G$};
    \draw (f1)--(f2)--(dots)--(fk);
    \node at (1.75,-0.7) {weak dual $T$};
    \node[font=\scriptsize,align=center] at (0,0.75) {leaf\\face};
    \node[font=\scriptsize,align=center] at (3.5,0.75) {leaf\\face};
    \node at (0,-1.25) {$x_F\in X$};
    \node at (3.5,-1.25) {$x_G\in X$};
  \end{scope}

  \begin{scope}[xshift=2.65cm]
    \coordinate (a) at (0,0);
    \coordinate (u) at (0.65,0.9);
    \coordinate (v) at (1.55,0.9);
    \coordinate (b) at (2.2,0);
    \coordinate (c) at (0.35,-1.0);
    \coordinate (d) at (1.85,-1.0);
    \draw[thick] (a)--(u)--(v)--(b);
    \draw[thick] (a)--(c);
    \draw[thick] (b)--(d);
    \draw[very thick] (a)--(b);
    \draw[dashed,thick] (c)--(d);
    \fill (a) circle (1.5pt) node[left] {$a$};
    \fill (b) circle (1.5pt) node[right] {$b$};
    \fill (u) circle (1.3pt) node[above] {$u$};
    \fill (v) circle (1.3pt) node[above] {$v$};
    \fill (c) circle (1.3pt);
    \fill (d) circle (1.3pt);
    \node at (1.1,0.47) {$F$};
    \node at (1.1,-0.48) {$F'$};
    \node[font=\scriptsize,fill=white,inner sep=1pt] at (1.1,0.12) {chord $ab$};
    \node[font=\scriptsize,fill=white,inner sep=1pt] at (1.1,-1.18) {possible second chord $e_c$};
    \node[font=\scriptsize] at (1.1,-1.62) {(b) an end face and its neighbour};
  \end{scope}
\end{tikzpicture}
\caption{The terminal configuration in Lemma~\ref{lem:two-terminal-outerplanar-ear}.  If the desired degree-$2$ edge is absent, the weak dual is forced to be a path with a $4$-cycle at each end.  At an end chord $ab$, one endpoint is not incident with the possible second chord of the neighbouring face.  }
\label{fig:two-terminal-ear}
\end{figure}

\begin{proof}
Fix an outerplane embedding.  Let $C$ be the Hamilton cycle bounding the outer face, and let $T$ be the weak dual whose vertices are the bounded faces and whose edges correspond to shared chords.  Since $Q$ is not a cycle, it has a chord, so $T$ is a non-trivial tree.

\medskip
\noindent\emph{Leaf faces.}
Let $F$ be a leaf of $T$.  Exactly one edge of $F$ is a chord; call it $ab$.  The remainder of the facial boundary is an $a$--$b$ path $P_F$ contained in $C$.  Every internal vertex of $P_F$ has degree $2$ in $Q$: a further chord at such a vertex would lie inside the region bounded by $P_F\cup ab$ and would split the face $F$.  Since $Q$ is bipartite, the facial cycle is even, and hence $P_F$ has odd length at least $3$.  If $I_F$ denotes the set of internal vertices of $P_F$, then $|I_F|$ is even and at least $2$, and the vertices in $I_F$ induce a path.

Suppose, for a contradiction, that there is no edge outside $X$ whose endpoints both have degree $2$.  Then $X\cap I_F$ is a vertex cover of the path induced by $I_F$ for every leaf face $F$.  A vertex internal to $P_F$ has degree $2$ and is incident with no other bounded face, so the sets $I_F$ belonging to distinct leaf faces are disjoint.  Thus distinct leaves of $T$ require distinct vertices of $X$.

Every non-trivial tree has at least two leaves.  Since $|X|\le2$, it follows that $T$ has exactly two leaves, $|X|=2$, and each of the two leaf paths contains exactly one vertex of $X$.  A path on $k=|I_F|$ vertices has vertex-cover number $k/2$ because $k$ is even.  One vertex covers it only when $k=2$.  Hence each leaf face is a $4$-cycle, with outer path
\[
        a-u-v-b,
\]
where exactly one of $u,v$ belongs to $X$.  A tree with exactly two leaves is a path, so the weak dual $T$ is a path, as illustrated in Figure~\ref{fig:two-terminal-ear}.

\medskip
\noindent\emph{The end chord.}

\begin{claim}[An endpoint of the end chord has degree $3$]\label{clm:outer-ear-endpoint}
Let $F$ be an end face, let $ab$ be its chord, and let $F'$ be the unique neighbouring bounded face.  At least one endpoint of $ab$ is outside $X$, is incident with no chord other than $ab$, and consequently has degree $3$ in $Q$.
\end{claim}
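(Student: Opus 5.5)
The plan is to pin down where $X$ sits, then use the local geometry of the embedding at the chord $ab$. The degree count will then follow from the fact that a vertex of the Hamilton cycle $C$ has degree $2$ plus the number of chords at it.

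\emph{Step 1: $a,b\notin X$.} By the preceding argument, $X$ consists of exactly two vertices, each lying in the internal set $I_F$ or $I_G$ of one of the two leaf faces $F,G$. The endpoints $a,b$ of the chord are not internal to $P_F$. Each of them is incident with two edges of $C$ and with the chord $ab$, so it has degree at least $3$. Every vertex of $I_G$ has degree $2$, so neither $a$ nor $b$ lies in $I_G$. Hence $a,b\notin X$, and the claim's first requirement holds for both endpoints. We only need to select the one avoiding extra chords.

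\emph{Step 2: other chords at $a$ or $b$ must bound $F'$.} Fix $a$ and list the edges at $a$ in the rotation of the outerplane embedding. They are the two edges of $C$ with all chords at $a$ between them. The face $F$ lies between the outer edge $au$ and $ab$, and $F$ has no further chord. So if $a$ is incident with a chord other than $ab$, take the one following $ab$ in the rotation on the side away from $F$. That chord and $ab$ are consecutive edges at $a$ on the boundary of the same bounded face, namely $F'$. Hence it is a chord of $F'$ distinct from $ab$.

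\emph{Step 3: $F'$ has at most one other chord.} The weak dual $T$ is a path, so $F'$ is adjacent in $T$ to $F$ and to at most one further face. Thus $F'$ has at most one chord besides $ab$; call it $e_c$ if it exists. If $F'=G$ there is no such chord.

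\emph{Step 4: choose the endpoint.} The chord $e_c$ is different from $ab$ and the graph is simple, so $e_c$ cannot have both $a$ and $b$ as endpoints. Choose an endpoint of $ab$, say $b$, that is not an endpoint of $e_c$. By Step 2, $b$ is incident with no chord other than $ab$. Its degree is therefore $2+1=3$, as claimed.

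The main obstacle is Step 2, the rotation argument asserting that a second chord at $a$ would appear on the boundary of $F'$. I would make it precise by noting that the chords at $a$, together with the two edges of $C$ at $a$, cut the disc into consecutive faces around $a$. The face between $ab$ and the next edge in the rotation away from $F$ is $F'$, since $F'$ is the unique other face containing $ab$. If that next edge is a chord, it is a chord of $F'$.

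This degree-$3$ vertex lies outside $X$, so it contradicts the hypothesis that all vertices outside $X$ have even degree. That contradiction is what the surrounding proof of Lemma~\ref{lem:two-terminal-outerplanar-ear} presumably uses next.
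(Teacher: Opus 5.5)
Your proposal is correct and follows essentially the same route as the paper. Both endpoints are excluded from $X$ because the elements of $X$ are degree-$2$ internal leaf-path vertices. The path structure of the weak dual leaves $F'$ with at most one further chord $e_c$. A rotation argument at the endpoint of $ab$ not on $e_c$ then shows that $ab$ is its only chord, giving degree $2+1=3$.

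The only difference is the order of the steps. You first show that any extra chord at an endpoint must bound $F'$ and then choose the endpoint, whereas the paper chooses the endpoint first and then runs the rotation argument.
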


\begin{proof}
In fact both $a$ and $b$ lie outside $X$.  The two elements of $X$ lie in the internal vertex sets of the two opposite leaf paths, while an internal leaf-path vertex has degree $2$ and is incident with no chord; hence it cannot be an endpoint of $ab$.

Since the weak dual is a path, the vertex corresponding to $F'$ has degree at most $2$.  Thus the boundary of $F'$ contains at most one chord other than $ab$; call it $e_c$ if it exists, as in Figure~\ref{fig:two-terminal-ear}.  The chord $e_c$ can be incident with at most one of $a,b$, so choose an endpoint, say $a$, which is not incident with $e_c$.

Inspect the cyclic order of edges around $a$.  On the $F$ side, the face boundary uses $ab$ and one outer-cycle edge at $a$.  On the $F'$ side, because $a$ is not incident with $e_c$, the face boundary uses $ab$ and the other outer-cycle edge.  Any further chord at $a$ would lie between $ab$ and one of these outer edges and would create another bounded face adjacent to $F$ or $F'$.  This contradicts, respectively, that $F$ is a leaf face or that $F'$ has no second chord at $a$.  Hence $ab$ is the only chord incident with $a$, and $a$ has exactly two outer-cycle edges and $ab$.  Therefore $d_Q(a)=3$.
\end{proof}

Claim~\ref{clm:outer-ear-endpoint} gives a vertex $a\notin X$ of odd degree, contradicting the hypothesis.  This proves the lemma.
\end{proof}

For $k\ge1$, let $\mathcal D_k$ be the birooted graph obtained from copies $Q_1,\ldots,Q_k$ of $C_4$, each with distinguished opposite vertices $a_i,b_i$, by identifying $b_i$ with $a_{i+1}$ for $1\le i<k$ and taking $a_1,b_k$ as the roots.  Equivalently, $\mathcal D_k$ is the $K_{2,2}$-replacement of the path $P_k$.

\begin{proof}[Proof of Theorem~\ref{thm:outerplanar-source}]
Assume first that $B$ is universal, and put
\[
        H=C_2(B).
\]
By Theorem~\ref{thm:exact-cyclic-source}, $H$ is strongly dominating.

\begin{claim}[The doubled block is bridgeless]\label{clm:outer-source-bridgeless}
The graph $H$ has no bridge.
\end{claim}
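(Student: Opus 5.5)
The plan is to obtain Claim~\ref{clm:outer-source-bridgeless} from Theorem~\ref{thm:bridge} by a contradiction. That theorem says a connected strongly dominating graph with a bridge is a tree. So it suffices to check that $H=C_2(B)$ is connected, strongly dominating and contains a cycle. Strong domination is already available: $H$ is strongly dominating by Theorem~\ref{thm:exact-cyclic-source}, since $B$ is assumed universal.

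First, I will check that $H$ is a legitimate simple connected graph. Because $\ell r\notin E(B)$, the identifications defining $C_2(B)$ create no parallel edges, so $H$ is simple. Because $B$ is connected, each of the two copies $B_1,B_2$ is connected, and the copies share the identified root vertices. Hence $H$ is connected, and it has at least one edge since $b\ge1$.

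Next, I will exhibit a cycle in $H$. Since $B$ is connected, it contains an $\ell$--$r$ path $P$. Since $\ell\ne r$ and $\ell r\notin E(B)$, the path $P$ has length at least $2$. In $H$, the copy $P_1\subseteq B_1$ runs from $x_1$ to $x_2$, and the copy $P_2\subseteq B_2$ runs from $x_2$ back to $x_1$. These two paths share only their endpoints, because the interiors of distinct copies are disjoint. Their union is therefore a cycle of length $2e(P)\ge4$ in $H$.

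Finally, suppose $H$ had a bridge. Then Theorem~\ref{thm:bridge} would force $H\cong K_2$, $P_{2s}$ or $K_{1,2s}$, all of which are trees. This contradicts the cycle just found. (Equivalently, Lemma~\ref{lem:bridgeleaves} would give a leaf, and Lemma~\ref{lem:leafcycle} would then contradict strong domination.) The only point needing care is that the two copies of $P$ meet only at the roots, which holds by construction of the amalgam. I expect no real obstacle; the claim is essentially a bookkeeping consequence of the bridge theorem.
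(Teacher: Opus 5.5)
Your proposal is correct and follows the paper's proof essentially verbatim: the union of an $\ell$--$r$ path in each of the two copies is a cycle in $H$, so a bridge would contradict Theorem~\ref{thm:bridge}, which forces a tree. Your extra checks are details the paper leaves implicit: that $H$ is connected and simple, and that the two path copies meet only at the shared roots, giving a genuine cycle of length at least $4$.
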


\begin{proof}
Since $B$ is connected, choose an $\ell$--$r$ path in each of the two copies.  Their union is a cycle in $H$.  If $H$ had a bridge, Theorem~\ref{thm:bridge} would force $H$ to be $K_2$, an even path, or an even star, none of which contains a cycle.  Hence $H$ is bridgeless.
\end{proof}

If $\Delta(H)=2$, then Claim~\ref{clm:outer-source-bridgeless} and connectivity imply that every vertex of $H$ has degree $2$, so $H$ is a cycle.  Every non-root vertex of $B$ consequently has degree $2$, while each shared vertex of $H$ has degree
\[
        d_B(\ell)+d_B(r)=2.
\]
Connectivity gives $d_B(\ell),d_B(r)\ge1$, so both root degrees are $1$.  Thus $B$ is a path with the roots at its endpoints.  The case of one edge is excluded by the standing assumption that the roots are non-adjacent, so $B\cong P_k$ with $k\ge2$.

We may henceforth assume that $\Delta(H)\ge4$.

\begin{claim}[The block-cut tree is a path between the roots]\label{clm:outer-source-block-path}
If $B$ has more than one block, then every leaf block contains exactly one root, that root is not a cut vertex, and the block-cut tree of $B$ is a path whose two leaf blocks contain $\ell$ and $r$, respectively.
\end{claim}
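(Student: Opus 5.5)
The plan is to run the terminal-block obstruction of the outerplanar classification inside $H=C_2(B)$. Two things are already available at this point:
\begin{itemize}
\item $H$ is strongly dominating (Theorem~\ref{thm:exact-cyclic-source}), bridgeless (Claim~\ref{clm:outer-source-bridgeless}), and satisfies $\Delta(H)\ge4$.
\item By Corollary~\ref{cor:core} and Lemma~\ref{lem:bipartite}, $H$ is bipartite, so every subgraph of $B$ is bipartite, and every block of $B$ is outerplanar.
\end{itemize}

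\textbf{Main step.} Let $L$ be a leaf block of $B$ with cut vertex $x$. I will show that $L\setminus\{x\}$ contains a root. Suppose it does not.
\begin{itemize}
\item \emph{Degrees and terminality.} No vertex of $L\setminus\{x\}$ is identified with anything when forming $H$. So these vertices keep their $B$-degrees, they have no neighbours outside $L$ in $H$, and $L$ is a terminal block of $H$ with cut vertex $x$ (one copy of $L$ in one copy of $B$).
\item \emph{$L$ is $2$-connected.} If $L\cong K_2$, its non-$x$ endpoint would be a leaf of $H$, and the edge would be a bridge of $H$, contradicting Claim~\ref{clm:outer-source-bridgeless}.
\item \emph{A degree-$2$ edge avoiding $x$.} If $L$ is a cycle, it has an edge $uv$ avoiding $x$ whose endpoints have degree $2$. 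Otherwise Lemma~\ref{lem:outerplanar-ear}, applied to the $2$-connected bipartite outerplanar graph $L$ with prescribed vertex $x$, gives such an edge. In either case $d_H(u)=d_H(v)=2<4\le\Delta(H)$.
\item \emph{Contradiction.} Corollary~\ref{cor:terminalblock} now says $H$ is not strongly dominating, which is false.
\end{itemize}
Hence every leaf block contains a root other than its cut vertex.

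\textbf{Counting.} Since $B$ has more than one block, its block-cut tree has at least two leaf blocks. Distinct leaf blocks share no vertex other than cut vertices, so the roots they contain as non-cut vertices are distinct. With only two roots, there are exactly two leaf blocks, each containing exactly one root.
\begin{itemize}
\item That root is not the block's own cut vertex.
\item A vertex of a leaf block other than its cut vertex lies in no other block, so the root is not a cut vertex of $B$.
\item A tree with exactly two leaves is a path. So the block-cut tree is a path whose end blocks contain $\ell$ and $r$ respectively.
\item Neither leaf block contains both roots, since then the other leaf block would contain none.
\end{itemize}

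\textbf{Main obstacle.} The delicate point is checking that $L$ is genuinely a terminal block of $H$ with unchanged degrees. This is exactly where the root-free hypothesis is used: a root would be glued to the other copy of $B$, raising its degree and breaking terminality. One should also note that $x$ itself might be a root. This is harmless, because only the vertices $u,v\ne x$ enter Corollary~\ref{cor:terminalblock}.
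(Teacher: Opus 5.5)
Your proof is correct and follows essentially the paper's route. A leaf block whose non-cut part contains no root is either a pendant edge, giving a bridge of $C_2(B)$, or it yields, via the outerplanar ear lemma, an edge with both endpoints of degree $2<\Delta(H)$, contradicting the maximum-degree cover obstruction; the two roots are then counted against the leaf blocks. Your formulation ``$L\setminus\{x\}$ contains a root'' merges the paper's two leaf-block cases, and Lemma~\ref{lem:outerplanar-ear} with one prescribed vertex suffices where the paper uses the two-terminal version with $X=\{x\}$; your pigeonhole count also replaces the paper's case analysis excluding a leaf block that contains both roots.
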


\begin{proof}
Use the standard bipartite block-cut tree: its nodes are the blocks and the cut vertices of $B$, and a block node is adjacent to the cut-vertex nodes it contains.  We regard a bridge as a block isomorphic to $K_2$.  Every cut-vertex node has degree at least $2$, so every leaf of the block-cut tree is a block node.  First consider a leaf block avoiding both roots.  If it is a bridge, its non-cut endpoint is a non-root vertex of degree $1$ in $H$, and the incident edge is a bridge of $H$, contradicting Claim~\ref{clm:outer-source-bridgeless}.  If it is a $2$-connected block $Q$ meeting the rest of $B$ only at the cut vertex $x$, every vertex of $Q-x$ has the same degree in $Q$, $B$, and $H$, and this degree is even by Lemma~\ref{lem:odddegree}.  If $Q$ is not a cycle, Lemma~\ref{lem:two-terminal-outerplanar-ear} with $X=\{x\}$ gives an edge whose endpoints have degree $2$ in $H$; if $Q$ is a cycle, choose an edge avoiding $x$.  In either case neither endpoint has degree $\Delta(H)$, contradicting Corollary~\ref{cor:maxdegreecover}.  Thus every leaf block contains a root.

A leaf block whose unique cut vertex is a root and which contains no other root is also impossible.  For a leaf bridge the non-root endpoint again yields a bridge of $H$.  For a $2$-connected leaf block the preceding argument applies with $X$ equal to that root.

No leaf block can contain both roots when the block-cut tree is non-trivial.  If neither root were its unique cut vertex, another leaf block would avoid both roots.  If one root were the unique cut vertex, another leaf block would have to contain that same root and no other root, which is the configuration just excluded.  A leaf block cannot have both roots as cut vertices because it has only one cut vertex.  Hence every leaf block contains exactly one root, and that root is not a cut vertex.  Distinct leaf blocks therefore require distinct roots.  A non-trivial finite tree has at least two leaves, and we have shown that its leaf nodes are precisely two block nodes, one containing each root.  A finite tree with exactly two leaves is a path, so the block-cut tree is a path between those two leaf blocks.
\end{proof}

In the one-block case assign the two roots as its ports.  In the multi-block case, Claim~\ref{clm:outer-source-block-path} orders the blocks along a path; give a terminal block its root and its unique cut vertex as ports, and give an internal block its two cut vertices as ports.

\begin{claim}[The only $2$-connected blocks are opposite-rooted $4$-cycles]\label{clm:outer-source-c4-blocks}
Every $2$-connected block $Q$ of $B$ is a copy of $C_4$, and its two ports are opposite vertices.
\end{claim}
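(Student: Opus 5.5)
The idea is to apply the two-terminal ear lemma to $Q$, with $X$ equal to the set of ports of $Q$, and then compare degrees in $H=C_2(B)$. There we already know $\Delta(H)\ge4$ and that $V_\Delta(H)$ covers all edges (Corollary~\ref{cor:maxdegreecover}).

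First I will record the degree facts. By Claim~\ref{clm:outer-source-block-path} the blocks lie along a path, and each vertex of $Q$ other than its ports lies in no other block. Such a vertex is not a root, so its degree in $H$ equals its degree in $Q$. By Lemma~\ref{lem:odddegree} and bridgelessness (Claim~\ref{clm:outer-source-bridgeless}), every such degree is even. Also $Q$ is bipartite by Lemma~\ref{lem:bipartite}, since $Q\subseteq H$, and $Q$ is outerplanar.

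\textbf{Case 1: $Q$ is not a cycle.} Lemma~\ref{lem:two-terminal-outerplanar-ear} with $X$ the set of ports (so $|X|\le2$) gives an edge $uv$ avoiding the ports with $d_H(u)=d_H(v)=2<4\le\Delta(H)$. This contradicts Corollary~\ref{cor:maxdegreecover}, so $Q$ must be a cycle, necessarily even.

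\textbf{Case 2: $Q$ is an even cycle $C_{2s}$.} Its two ports split it into two arcs. Any edge whose endpoints are both internal to an arc joins two vertices of $H$-degree $2$, which is excluded as above. Hence every arc has at most two edges. If the ports were adjacent, the short arc would be a single edge, so the long arc would have at most two edges and the cycle would be a triangle, contradicting bipartiteness. So both arcs have length exactly $2$, $Q\cong C_4$, and the ports are opposite.

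\textbf{Edge cases.} Two situations need separate handling.
\begin{itemize}
\item In the one-block case the ports are $\ell,r$. These are distinct and non-adjacent, as required in the argument above.
\item If a terminal block has a root as a port, that root is a port rather than a cut vertex. Its $H$-degree is $d_B(\ell)+d_B(r)$, so it may have even large degree, but that is harmless because the root lies in $X$.
\end{itemize}
Degree-$2$ internal vertices are never in $X$, so the argument goes through.

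The main obstacle is justifying that vertices outside the ports have the same degree in $Q$ and in $H$. This needs the block path structure, so that no third block attaches at an interior vertex of $Q$. A cut vertex of $B$ inside $Q$ that is not one of $Q$'s two ports would contradict the block-cut tree being a path with $Q$ on it. I will spell this out before invoking the ear lemma.
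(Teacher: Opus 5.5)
Your proposal is correct and follows the paper's proof. You apply Lemma~\ref{lem:two-terminal-outerplanar-ear} with $X$ equal to the two ports and use Corollary~\ref{cor:maxdegreecover} to force $Q$ to be a cycle. You then rule out any cycle edge avoiding both ports, which forces $C_4$ with opposite ports; your arc-length count is the paper's observation that the two ports form a vertex cover of the cycle.
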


\begin{proof}
Every vertex outside the two ports is neither a root nor a cut vertex, so its degree in $Q$ equals its degree in $H$ and is even.  If $Q$ is not a cycle, Lemma~\ref{lem:two-terminal-outerplanar-ear} produces an edge outside the ports whose endpoints have degree $2$ in $H$, contradicting Corollary~\ref{cor:maxdegreecover}.  Hence $Q$ is a cycle.

An edge of this cycle with both endpoints outside the ports would give the same contradiction.  Thus the two ports form a vertex cover of the cycle.  A cycle with a vertex cover of size $2$ has length at most $4$; since $Q$ is bipartite and simple, it is $C_4$, and the two covering vertices are opposite.
\end{proof}

\begin{claim}[Mixed block types are impossible]\label{clm:outer-source-uniform-blocks}
All blocks of $B$ are edges, or all blocks are opposite-rooted copies of $C_4$.
\end{claim}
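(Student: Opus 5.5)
We argue by contradiction, in the setting of the current proof: $H=C_2(B)$ is strongly dominating, bridgeless, and $\Delta(H)\ge4$. By Claim~\ref{clm:outer-source-block-path}, the blocks of $B$ are linearly ordered between $\ell$ and $r$. By Claim~\ref{clm:outer-source-c4-blocks}, each block is either a bridge (an edge between its two ports) or a $C_4$ with opposite ports. Suppose both types occur. Then somewhere along the block path a bridge block $e=xy$ is consecutive with a $C_4$ block, sharing a port. The plan is to locate, in $H$, an edge whose endpoints both have degree below $\Delta(H)$, and then invoke Corollary~\ref{cor:maxdegreecover}. A second route uses Lemma~\ref{lem:odddegree} on port degrees, and we keep it as a backup.

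First we compute degrees in $H$. A non-port vertex of a $C_4$ block has degree $2$. A cut vertex of $B$ gets degree $1$ from each adjacent bridge block and $2$ from each adjacent $C_4$ block. A shared root of $H$ (the identification of $\ell$ of one copy with $r$ of the other) has degree $d_B(\ell)+d_B(r)$, and each of these is $1$ or $2$. Every degree in $H$ is therefore at most $4$. Since $\Delta(H)\ge4$, we get $\Delta(H)=4$. A vertex reaches degree $4$ only in two ways: as a cut vertex between two $C_4$ blocks, or as a shared root where both end blocks of $B$ are $C_4$'s.

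Now take a bridge block $xy$. Each endpoint is either a cut vertex adjacent to the bridge, with degree $1+1=2$ or $1+2=3$, or a root, which contributes only $1$ from this side. In the root case, the shared vertex of $H$ has degree $1+d_B(\text{other root})\le3$. So both endpoints of every bridge block have $H$-degree at most $3<\Delta(H)$. Corollary~\ref{cor:maxdegreecover} then forbids any bridge block at all once a $C_4$ block exists, because a $C_4$ block forces $\Delta(H)=4$. This proves the claim. The same computation also shows that in the all-edges case $\Delta(H)=2$, which fits with the earlier reduction.

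The main point to check is the root case. The identification in $C_2(B)$ pairs $\ell$ of one copy with $r$ of the other, so the shared vertex's degree involves both end blocks of $B$. Here the bound $1+2=3$ is exactly what is needed. In the one-block case the claim is vacuous, so only the multi-block configuration from Claim~\ref{clm:outer-source-block-path} needs handling. If the degree-$3$ vertices were ever problematic, the fallback is Lemma~\ref{lem:odddegree}: a degree-$3$ vertex is already excluded outright. That would also rule out the $1+2$ mixed junctions directly and leave only the $1+1$ junctions for the vertex-cover argument.
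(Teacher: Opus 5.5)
Your argument is correct, but your main route differs from the paper's. The paper's argument is local. If both block types occur, some cut vertex joins a bridge block to a $C_4$ block. That vertex is then a non-root vertex of degree $1+2=3$ in $H$, which Lemma~\ref{lem:odddegree} forbids. This is exactly your fallback, and it needs neither the standing hypothesis $\Delta(H)\ge4$ nor any global degree bookkeeping.

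Your primary route proceeds globally instead. You compute every degree of $H=C_2(B)$, using Claim~\ref{clm:outer-source-block-path} to see that each root lies in a single block, so $d_B(\ell),d_B(r)\in\{1,2\}$. You then deduce $\Delta(H)=4$ from the standing hypothesis. Finally, you observe that both endpoints of any bridge block have $H$-degree at most $3$, so Corollary~\ref{cor:maxdegreecover} fails. This costs a little more work but proves something stronger: in the branch $\Delta(H)\ge4$ there are no bridge blocks at all, so $B\cong\mathcal D_k$ there. The all-edges alternative of the claim only arises in the earlier $\Delta(H)=2$ branch.

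One small correction. Your justification ``a $C_4$ block forces $\Delta(H)=4$'' is false on its own: for the chain bridge--$C_4$--bridge, $C_2(B)$ has maximum degree $3$. What actually gives $\Delta(H)=4$ is the standing assumption $\Delta(H)\ge4$ together with your upper bound of $4$, which you had already stated. The argument is therefore unaffected.
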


\begin{proof}
By Claim~\ref{clm:outer-source-c4-blocks}, every block is either a single edge or an opposite-rooted $C_4$.  At a cut vertex between consecutive blocks, an edge block contributes $1$ to the degree and a $C_4$ block contributes $2$.  A mixed pair would therefore give a non-root cut vertex of degree $3$ in $H$.  This contradicts Lemma~\ref{lem:odddegree}, which permits odd degree only at leaves.  Hence adjacent blocks have the same type, and the block path is uniform.
\end{proof}

Claims~\ref{clm:outer-source-block-path}--\ref{clm:outer-source-uniform-blocks} show that $B$ is either a path rooted at its endpoints or a chain $\mathcal D_k$ of opposite-rooted $4$-cycles.  As noted above, the non-adjacent-root hypothesis gives $k\ge2$ in the path case.

Conversely, if $B=P_k$ with $k\ge2$, then for every even $q$,
\[
        C_q(B)\cong C_{kq},
\]
and this is an even cycle, hence norming and strongly dominating.  Theorem~\ref{thm:exact-cyclic-source} shows that $B$ is universal.

If $B=\mathcal D_k$, then
\[
        C_q(B)\cong R_{kq}.
\]
Proposition~\ref{prop:Rn-reflection} says that this graph is norming for every $q\ge2$.  The cyclic transfer criterion again proves universality and completes the classification.
\end{proof}

The theorem gives the promised finite interpretation of the spectral source condition in a non-trivial minor-closed class.  It also identifies exactly where the present non-norming construction must leave that class.

\begin{corollary}[$K_{2,3}$ threshold for root-reversible series-parallel blocks]\label{cor:k23-threshold}
Let $(B,\ell,r)$ be a connected root-reversible series-parallel universal even-Schatten block with non-adjacent roots.  If $C_q(B)$ is not seminorming for some even $q\ge2$, then $B$ contains a $K_{2,3}$ minor.
\end{corollary}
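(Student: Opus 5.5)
We argue by contraposition, using the outerplanar source classification (Theorem~\ref{thm:outerplanar-source}). Suppose $B$ is connected, root-reversible, series-parallel, has non-adjacent roots, is a universal even-Schatten block, and has no $K_{2,3}$ minor. We will show that every even cyclic amalgam $C_q(B)$ is norming, and hence seminorming by Remark~\ref{rem:graph-norm-terminology}. This contradicts the hypothesis that some $C_q(B)$ with $q$ even is not seminorming.

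The first step is purely graph-theoretic. A graph is outerplanar exactly when it has neither a $K_4$ minor nor a $K_{2,3}$ minor. Series-parallel means $K_4$-minor-free in the sense fixed in the Preliminaries, and we are assuming $K_{2,3}$-minor-freeness, so $B$ is outerplanar. Hence $(B,\ell,r)$ is a connected, root-reversible, outerplanar birooted graph with non-adjacent roots. These are precisely the hypotheses of Theorem~\ref{thm:outerplanar-source}.

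The second step applies that theorem. Since $B$ is universal, it gives $B\cong P_k$ with $k\ge2$ and roots at the endpoints, or $B\cong\mathcal D_k$ with $k\ge1$. In both cases the theorem already states that every even cyclic amalgam is norming. Concretely, $C_q(P_k)\cong C_{kq}$, an even cycle, which is the Schatten-norm example. Also $C_q(\mathcal D_k)\cong R_{kq}$, which is norming by Proposition~\ref{prop:Rn-reflection}. Norming implies seminorming, so no even $C_q(B)$ can fail to be seminorming. This contradiction shows that $B$ must contain a $K_{2,3}$ minor.

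The only genuine point to check is the minor characterization of outerplanarity (Chartrand--Harary), together with the remark that the paper's convention ``series-parallel $=K_4$-minor-free'' supplies exactly the missing $K_4$ exclusion. One should also check that no simplicity or connectivity hypothesis of Theorem~\ref{thm:outerplanar-source} is lost: connectivity, root-reversibility and root non-adjacency are all assumed verbatim. Everything else is a direct citation, so I expect no real obstacle.
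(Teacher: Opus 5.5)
Your proposal is correct and follows the paper's own proof: series-parallel gives $K_4$-minor-freeness, so absence of a $K_{2,3}$ minor makes $B$ outerplanar. Theorem~\ref{thm:outerplanar-source} then makes every even $C_q(B)$ norming, hence seminorming, which is the required contradiction.
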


\begin{proof}
A series-parallel graph has no $K_4$ minor.  If $B$ also had no $K_{2,3}$ minor, the forbidden-minor characterization of outerplanar graphs would make $B$ outerplanar; see, for example, \cite{Diestel}.  Theorem~\ref{thm:outerplanar-source} would then make every even $C_q(B)$ norming, a contradiction.
\end{proof}

For every $t\ge2$, the block $\mathcal B_{2t}$ contains a $K_{2,3}$ minor but no $K_4$ minor, so Corollary~\ref{cor:k23-threshold} is sharp at the level of excluded minors and is witnessed by infinitely many universal blocks.  Their cyclic amalgams then acquire a $K_4$ minor by Proposition~\ref{prop:cyclic-k4-minor} below.  Thus the local sources first cross the outerplanar boundary, while the global families subsequently cross the series-parallel boundary.

\begin{proposition}[A minor obstruction for cyclic sources]\label{prop:cyclic-k4-minor}
Let $(B,\ell,r)$ be a connected birooted graph.  Suppose that $B-\{\ell,r\}$ contains disjoint connected vertex sets $A$ and $C$ such that each of $A$ and $C$ has a neighbour at both roots and there is an edge between $A$ and $C$.  Then, for every $q\ge2$, the cyclic amalgam $C_q(B)$ contains a $K_4$ minor.  In particular, no member of this cyclic family is series-parallel.
\end{proposition}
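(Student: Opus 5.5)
We exhibit four disjoint connected branch sets in $C_q(B)$ that are pairwise adjacent; contracting them gives $K_4$. We look in a single copy $B_1$ with roots $\ell_1,r_1$ and use the rest of the cycle as a connector.

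Inside $B_1$, take the branch sets $A$ and $C$ directly from the hypothesis. They are disjoint and connected, they avoid both roots, and there is an edge between them. For the third branch set take $X=\{\ell_1\}$, and for the fourth take
\[
        Y=\{r_1\}\cup V(B_2)\cup\cdots\cup V(B_q)\setminus\{\ell_1\},
\]
that is, the right root of $B_1$ together with every vertex of the other copies except the shared vertex $\ell_1=r_q$.

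We then check the six adjacencies. The pair $A$--$C$ is given by hypothesis. The pairs $A$--$X$, $C$--$X$, $A$--$Y$ and $C$--$Y$ hold because $A$ and $C$ each have a neighbour at $\ell_1$ and at $r_1$, and $r_1\in Y$.

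The pair $X$--$Y$ and the connectivity of $Y$ need slightly more care, and this is the main point to get right.

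\emph{Connectivity of $Y$.} For $q\ge3$, each copy $B_i$ with $2\le i\le q-1$ is connected and contains both its roots, so $r_1=\ell_2$ is joined through $B_2,\dots,B_{q-1}$ to $\ell_q$. It remains to handle $B_q$ after deleting its right root $r_q=\ell_1$. Every vertex $w$ of $B_q-r_q$ should lie in the same component of $B_q-r_q$ as $\ell_q$, or else be attached to $Y$ elsewhere. To avoid this issue entirely, I would shrink $Y$ to the union of $r_1$ with a single $\ell_q$--$r_1$ path through the copies $B_2,\dots,B_{q-1}$, together with the vertices of an $\ell_q$--$r_q$ path $P$ in $B_q$ minus its last vertex $r_q$. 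This set is connected because paths are connected.

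\emph{Adjacency $X$--$Y$.} The last edge of $P$ joins $\ell_1=r_q$ to a vertex of $Y$. When $q=2$, the same construction applies: $Y$ consists of $r_1=\ell_2$ together with a path from $\ell_2$ to $r_2=\ell_1$ in $B_2$, minus the endpoint $r_2$. Its last edge again gives the $X$--$Y$ adjacency.

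Disjointness is clear. $A$ and $C$ lie in the interior of $B_1$. $X$ is the single vertex $\ell_1$. $Y$ meets $B_1$ only at $r_1$ and avoids $\ell_1$, because every path we used stops before $r_q=\ell_1$; note that the path through $B_2,\dots,B_{q-1}$ can revisit only shared roots, never $\ell_1$.

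Contracting the four branch sets therefore yields $K_4$. Since series-parallel means $K_4$-minor-free, no $C_q(B)$ is series-parallel.

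The only potential pitfall is the degenerate case where $P$ has length $1$, so that $\ell_q$ is adjacent to $r_q$. In that case $Y$ still contains $\ell_q$, and $\ell_q$ is adjacent to $\ell_1$, so the argument is unaffected.
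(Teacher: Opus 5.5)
Your proof is correct and is essentially the paper's argument. The paper uses the same branch sets: $\{\ell\}$, $A$, $C$, and the root-to-root path concatenated through the other $q-1$ copies with its terminal vertex $\ell$ removed. Shrinking $Y$ from all of the other copies to a single path matches the paper's choice, and it is what avoids the disconnection problem when a root is a cut vertex of $B$.
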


\begin{proof}
Choose one copy of $B$ in $C_q(B)$ and denote its two junction vertices by $\ell$ and $r$.  Because $B$ is connected, every other copy contains a path between its two roots.  Concatenating such paths through the remaining $q-1$ copies gives an $r$--$\ell$ path $P$ whose internal vertices avoid the chosen copy.

Contract $A$ and $C$ separately.  Contract all of $P$ except its terminal vertex $\ell$ to a connected branch set $D$ containing $r$.  The four disjoint branch sets
\[
        \{\ell\},\qquad A,\qquad C,\qquad D
\]
are pairwise adjacent.  The sets $A$ and $C$ are adjacent by hypothesis; each is adjacent to both $\{\ell\}$ and $D$ through its prescribed neighbours at the two roots; and $\{\ell\}$ is adjacent to $D$ through the final edge of $P$.  They therefore form a $K_4$ minor.
\end{proof}

The hypothesis holds for every $\mathcal B_{2t}$ with $t\ge2$: take $A=\{a\}$ and let $C$ consist of $b$ together with one private vertex.  Thus the escape from the series-parallel class follows from a local two-channel structure shared by the entire complete-bipartite source family.

\section{Further questions}\label{sec:further-directions}

Theorem~\ref{thm:outerplanar-source} settles the cyclic source problem for root-reversible outerplanar blocks, and Corollary~\ref{cor:k23-threshold} reduces the root-reversible series-parallel case to blocks containing a $K_{2,3}$ minor.  The conjecture below is consistent with both boundaries established here.  It holds for $2$-connected outerplanar graphs by Theorem~\ref{thm:outerplanar}, whereas every non-seminorming family $N_{t,q}$ constructed in Section~4 contains a $K_4$ minor by Proposition~\ref{prop:cyclic-k4-minor} and therefore lies outside the series-parallel class.  Thus any counterexample would require a new mechanism inside the genuinely non-outerplanar series-parallel regime.

\begin{conjecture}[Series-parallel boundary]\label{conj:series-parallel}
Every $2$-connected series-parallel strongly dominating graph is seminorming.
\end{conjecture}

\begin{problem}[Series-parallel source classification]\label{prob:series-parallel-sources}
Classify the connected root-reversible series-parallel universal even-Schatten blocks with non-adjacent roots.  By Corollary~\ref{cor:k23-threshold}, it is enough to consider blocks containing a $K_{2,3}$ minor.  Corollary~\ref{cor:complete-bipartite-source-parity} settles the full two-rooted $K_{2,m}$ subfamily.
\end{problem}

\begin{problem}[Combinatorial classification of universal blocks]\label{prob:universal-schatten-blocks}
Classify all finite root-reversible birooted graphs satisfying the equivalent conditions of Theorem~\ref{thm:exact-cyclic-source}.  In particular, determine whether every such block admits a finite half-operator factorization certificate of the type used for $\mathcal B_{2t}$.
\end{problem}

A broader analytic problem is to determine whether strong domination is generated by a short list of closure principles: decorated H\"older for seminorming graphs, spectral moments for even paths, scalar H\"older for the one-root construction in Appendix~\ref{app:one-root}, and noncommutative H\"older for cyclic two-root amalgamation.

\clearpage
\appendix
\section{The scalar one-root comparison}\label{app:one-root}

A \emph{rooted graph} is a pair $(B,r)$ consisting of a graph $B$ and a distinguished vertex $r$.  For a rooted subgraph $S\subseteq B$, we retain the root as an isolated vertex if it is not incident with an edge of $S$.  Given a kernel $W$, let
\[
        f_S^W(x)
\]
denote the homomorphism density of $S$ with the root fixed at $x$, integrating over all other vertices; put $f_\emptyset^W\equiv1$.  For an integer $q\ge2$, write $B^{\vee q}$ for the graph obtained from $q$ disjoint copies of $B$ by identifying their roots and making no other identifications.  Rooted and partially labelled homomorphism densities are standard objects in graph algebras and graph-limit theory \cite{LovaszSzegedyContractors,LovaszSzegedyPositivstellensatz,LovaszBook}.

The one-root construction is the scalar counterpart of cyclic amalgamation.

\begin{theorem}[Even-fold rooted-amalgamation criterion]\label{thm:evenfold-rooted-certificate}
Let $(B,r)$ be a rooted graph with $b=e(B)\ge1$, and let $q\ge2$ be even.  Then the following are equivalent.
\begin{enumerate}[label=\textup{(\roman*)}]
\item The $q$-fold rooted amalgam $B^{\vee q}$ is strongly dominating.
\item For every bounded symmetric real kernel $W$ and every non-empty rooted subgraph $S\subseteq B$,
\begin{equation}\label{eq:general-evenfold-certificate}
        \|f_S^W\|_{L^q}
        \le
        \|f_B^W\|_{L^q}^{e(S)/b}.
\end{equation}
\end{enumerate}
In particular,
\[
        p_{B^{\vee q}}(W)=\|f_B^W\|_{L^q}^{1/b}.
\]
\end{theorem}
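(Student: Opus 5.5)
The plan is to copy the cyclic argument of Theorem~\ref{thm:cyclic-schatten-criterion}, replacing the cyclic trace by a pointwise product at the shared root and the Schatten H\"older inequality by the scalar H\"older inequality on $[0,1]$.

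\emph{Factorization.} Every subgraph $F\subseteq B^{\vee q}$ restricts to rooted subgraphs $S_1,\ldots,S_q$ of the $q$ copies. Conditioning on the root variable $x$, the copies become conditionally independent, so
\[
        t_F(W)=\int_0^1\prod_{i=1}^q f_{S_i}^W(x)\,dx ,
\]
with $f_\emptyset^W\equiv1$. Taking every $S_i=B$ and using that $q$ is even gives
\[
        t_{B^{\vee q}}(W)=\int (f_B^W)^q=\|f_B^W\|_{L^q}^q\ge0 .
\]
This is the stated formula for $p_{B^{\vee q}}$.

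\emph{(ii)$\Rightarrow$(i).} By the generalized H\"older inequality with all exponents equal to $q$,
\[
        |t_F(W)|\le\prod_i\|f_{S_i}^W\|_{L^q}.
\]
Empty restrictions contribute factor $1$, since $\|1\|_{L^q}=1$ on a probability space. For the non-empty restrictions, \eqref{eq:general-evenfold-certificate} gives
\[
        |t_F(W)|\le\|f_B^W\|_{L^q}^{\sum_i e(S_i)/b}=p_{B^{\vee q}}(W)^{e(F)} .
\]
This is strong domination of every non-empty $F$.

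\emph{(i)$\Rightarrow$(ii).} Fix a non-empty $S\subseteq B$ and place a copy of $S$ in each of the $q$ blocks. The resulting $F$ has $e(F)=q\,e(S)$ edges, and because $q$ is even,
\[
        t_F(W)=\int (f_S^W)^q=\|f_S^W\|_{L^q}^q .
\]
Strong domination gives $p_F(W)\le p_{B^{\vee q}}(W)$, that is,
\[
        \|f_S^W\|_{L^q}^{1/e(S)}\le\|f_B^W\|_{L^q}^{1/b} .
\]
Raising both sides to the power $e(S)$ yields \eqref{eq:general-evenfold-certificate}. No symmetry hypothesis is needed, unlike the cyclic case, because the scalar functions $f_S^W$ play the role that self-adjointness played for operators.

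The only point needing care is the factorization. One must check that distinct copies share only the root, so the integrals over non-root vertices separate. The functions $f_S^W$ are bounded because $W$ is bounded, so Fubini applies. Everything else is routine, and I expect no real obstacle. The mild one is bookkeeping: the exponent $e(S)/b$ must survive when empty restrictions are included, and it does because those restrictions contribute $0$ edges and factor $1$.
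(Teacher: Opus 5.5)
Your proposal is correct and follows the paper's proof essentially step for step. It uses the same root-conditioned factorization $t_F(W)=\int_0^1\prod_{i=1}^q f_{S_i}^W(x)\,dx$ and the same diagonal placement of $S$ in all $q$ copies for (i)$\Rightarrow$(ii), which relies on $q$ being even. For (ii)$\Rightarrow$(i) it uses the same scalar H\"older argument, with empty restrictions contributing the factor $1$ and no edges.
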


\begin{proof}
Every subgraph $F\subseteq B^{\vee q}$ decomposes, after isolated vertices are ignored, as a $q$-tuple of rooted subgraphs $S_1,\ldots,S_q\subseteq B$.  With the common root fixed at $x$,
\[
        t_F(W)=\int_0^1\prod_{i=1}^q f_{S_i}^W(x)\,dx.
\]
For the full amalgam, the copies are identical and $q$ is even, so
\[
        t_{B^{\vee q}}(W)
        =\int_0^1 f_B^W(x)^q\,dx
        =\|f_B^W\|_q^q.
\]
Since $e(B^{\vee q})=qb$, this gives the displayed formula for $p_{B^{\vee q}}$.

Assume first that $B^{\vee q}$ is strongly dominating.  Place the same rooted subgraph $S$ in all $q$ copies.  The resulting subgraph $S^{\vee q}$ satisfies
\[
        t_{S^{\vee q}}(W)=\|f_S^W\|_q^q,
        \qquad
        e(S^{\vee q})=q e(S).
\]
For non-empty $S$, the inequality $p_{B^{\vee q}}(W)\ge p_{S^{\vee q}}(W)$ is exactly
\eqref{eq:general-evenfold-certificate}.

Conversely, assume \eqref{eq:general-evenfold-certificate}.  Empty restrictions contribute the constant function $1$ and are omitted from the norm product and from the edge-count sum.  H\"older's inequality gives
\[
\begin{aligned}
        |t_F(W)|
        &\le \prod_{i=1}^q\|f_{S_i}^W\|_q \\
        &\le \|f_B^W\|_q^{\sum_i e(S_i)/b}
         =p_{B^{\vee q}}(W)^{e(F)}.
\end{aligned}
\]
Thus $p_F(W)\le p_{B^{\vee q}}(W)$ for every non-empty subgraph $F$.
\end{proof}

\begin{remark}[The analytic inequality behind amalgamation]\label{rem:holder-vs-bl}
The composition step in Theorem \ref{thm:evenfold-rooted-certificate} is exactly the generalized H\"older inequality on the common-root variable.  It is therefore unnecessary, and potentially misleading, to repackage the one-root criterion as a Brascamp--Lieb inequality.  The graphical Brascamp--Lieb inequality of Sah--Sawhney--Stoner--Zhao concerns non-negative edge-decorated integrals on triangle-free graphs and bounds them by complete-bipartite graph norms \cite{SahSawhneyStonerZhao}; it does not supply the signed conditional estimates required here.  For a two-vertex boundary, rooted functions become transfer operators and generalized H\"older is replaced by the Schatten H\"older inequality developed in Sections~3 and~4.
\end{remark}

\begin{proposition}[Why one-root iteration eventually stops]\label{prop:one-root-no-go}
Let $(B,r)$ be a rooted graph with $b=e(B)\ge1$ and $d_B(r)>0$.  Define
\[
\begin{aligned}
        \tau_0(B,r)&=\min\{|C|:C\text{ is a vertex cover of }B\text{ and }r\notin C\},\\
        \tau_1(B,r)&=\min\{|C|:C\text{ is a vertex cover of }B\text{ and }r\in C\}.
\end{aligned}
\]
Then, for every $q\ge1$,
\begin{equation}\label{eq:wedge-cover-formula}
        \tau(B^{\vee q})
        =\min\bigl\{q\tau_0(B,r),\ 1+q(\tau_1(B,r)-1)\bigr\}.
\end{equation}
If $B$ has an edge not incident with $r$, then $B^{\vee q}$ is not strongly dominating for all sufficiently large $q$.  Consequently, among rooted blocks with $d_B(r)>0$, a fixed block can produce strongly dominating one-root amalgams for infinitely many multiplicities only when, after isolated vertices are removed, it is a star centred at the root.
\end{proposition}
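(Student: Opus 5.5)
Three things need proof: the vertex-cover formula, non-domination for large $q$ via Theorem~\ref{thm:vertexcover}, and the classification consequence.

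\textbf{Cover formula.} A vertex cover $C$ of $B^{\vee q}$ either omits the shared root or contains it.
\begin{itemize}
\item If it omits the root, its restriction to each copy is a cover of $B$ avoiding $r$. This costs at least $q\tau_0$, and $q$ optimal such covers achieve it.
\item If it contains the root, each copy contributes a cover of $B$ containing $r$. Counting the root once, the cost is at least $1+q(\tau_1-1)$, again achieved by gluing optimal covers.
\end{itemize}
Taking the minimum gives \eqref{eq:wedge-cover-formula}, with the convention $\tau_0=\infty$ if no root-avoiding cover exists. Since $d_B(r)>0$, a root-avoiding cover exists, so this convention is not actually needed.

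\textbf{Non-domination for large $q$.} I apply Theorem~\ref{thm:vertexcover}, which needs $\tau(B^{\vee q})>e(B^{\vee q})/\Delta(B^{\vee q})$.
\begin{itemize}
\item We have $e(B^{\vee q})=qb$.
\item For $q$ large, the maximum degree is $\Delta_q=q\,d_B(r)$, since root degree grows linearly while other degrees stay fixed.
\item Hence $e/\Delta_q=b/d_B(r)$, a constant independent of $q$.
\item $\tau_0\ge1$ because $r$ has a neighbour that must be covered, so $q\tau_0\to\infty$.
\item If $B$ has an edge not incident with $r$, then $\tau_1\ge2$: the root alone does not cover that edge. So $1+q(\tau_1-1)\ge 1+q\to\infty$.
\end{itemize}
Thus $\tau(B^{\vee q})\to\infty$ while $e/\Delta$ stays bounded, so the strict inequality holds for all large $q$ and Theorem~\ref{thm:vertexcover} applies.

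\textbf{Consequence.} If infinitely many multiplicities give strongly dominating amalgams, the previous step forces every edge of $B$ to be incident with $r$. After removing isolated vertices, $B$ is then a star centred at $r$; a loop is impossible since graphs are simple.

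The main obstacle is bookkeeping. The threshold for ``large $q$'' must ensure that the root attains the maximum degree, which holds once $q\,d_B(r)$ exceeds $\Delta(B)$. Alternatively one can bound $\Delta_q\ge q\,d_B(r)$ directly, since a larger $\Delta$ only helps the inequality.
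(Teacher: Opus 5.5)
Your proof is correct and follows the paper's argument essentially step for step. You use the same split of covers according to whether they contain the shared root, and the same comparison, via Theorem~\ref{thm:vertexcover}, of the linearly growing $\tau(B^{\vee q})$ against the bounded ratio $e(B^{\vee q})/\Delta(B^{\vee q})=b/d_B(r)$. One cosmetic point: a root-avoiding cover exists simply because $B$ is simple, so $V(B)\setminus\{r\}$ works; it does not come from $d_B(r)>0$, which is what you correctly use for $\tau_0\ge1$ and for the root's degree eventually growing without bound.
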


\begin{proof}
A vertex cover of $B^{\vee q}$ either contains the common root or does not.  In the second case its restriction to every copy is a cover excluding $r$, and the copies are otherwise disjoint, giving the first term in \eqref{eq:wedge-cover-formula}.  In the first case the common root is counted once and each copy requires $\tau_1(B,r)-1$ additional vertices, giving the second term.

Suppose that $B$ has an edge not incident with $r$.  Then $\tau_0(B,r)\ge1$ and $\tau_1(B,r)-1\ge1$, so \eqref{eq:wedge-cover-formula} grows linearly with $q$.  On the other hand, if
\[
        \Delta_* = \max\{d_B(v):v\ne r\},
\]
then
\[
        e(B^{\vee q})=qb,
        \qquad
        \Delta(B^{\vee q})=\max\{q d_B(r),\Delta_*\}.
\]
For all sufficiently large $q$ this gives
\[
        \frac{e(B^{\vee q})}{\Delta(B^{\vee q})}
        =\frac{b}{d_B(r)},
\]
a bounded quantity.  Hence eventually
\[
        \tau(B^{\vee q})>\frac{e(B^{\vee q})}{\Delta(B^{\vee q})},
\]
and Theorem \ref{thm:vertexcover} excludes the amalgam.

If every edge of $B$ is incident with $r$, then, after isolated vertices are removed, $B$ is a star centred at $r$, and $B^{\vee q}$ is again a star.  Thus, under the standing assumption $d_B(r)>0$, stars are the only fixed one-root blocks not ruled out at large multiplicity; the even-degree members are strongly dominating by Proposition \ref{prop:evenstars}.
\end{proof}

\begin{remark}
The condition $d_B(r)>0$ excludes a degenerate case in which the chosen root does not participate in the amalgamation.  For example, if $B=K_2\sqcup\{r\}$ with $r$ isolated, then $B^{\vee q}=qK_2\sqcup\{r\}$, and the amalgams are strongly dominating for every $q$.  For connected rooted blocks with at least one edge, the root-incidence condition is automatic.
\end{remark}

\end{document}